\documentclass{IOS-Book-Article}

\usepackage{mathptmx}

\usepackage{mathtools}
\usepackage{amssymb, hyperref}
 
\usepackage{ amsmath,amssymb,amsbsy,amsfonts, latexsym,amsopn,amstext, amsxtra,euscript,amscd}
\usepackage{amsthm}

\usepackage{cite}

\usepackage[alphabetic,   msc-links]{amsrefs}

\hypersetup{
  colorlinks   = true, 
  urlcolor     = blue, 
  linkcolor    = blue, 
  citecolor   = red 
}

 \def\cR{\mathcal R}

\newtheorem{thm}{Theorem}

\newtheorem {prop}{Proposition}
\newtheorem {lem}{Lemma}
\newtheorem {defi}{Definition}
\newtheorem {exa}{Example}

\newtheorem{alg}{Algorithm}
\newtheorem {rem}{Remark}
\newtheorem {cor}{Corollary}


\DeclareMathOperator\Aut{Aut}

\newcommand\Z{\mathbb Z}
\newcommand\Q{\mathbb Q}
\newcommand\R{\mathbb R}
\newcommand\C{\mathbb C}

\newcommand\bQ{\overline{\mathbb Q}}

\def\P{\mathbb P}

\newcommand\M{{\mathcal M}}

\newcommand\X{\mathcal X}            
\newcommand\B{\mathcal B}

\newcommand\w{\mathfrak w}

\newcommand\iso{\cong}

\newcommand\D{\Delta}

\newcommand\<{\langle}

\def\a{\alpha}

\def\deg{\mbox{deg }}

\def\mH{\mathcal H}

\def\iso{\equiv}

\DeclareFontFamily{U}{wncy}{}
    \DeclareFontShape{U}{wncy}{m}{n}{<->wncyr10}{}
    \DeclareSymbolFont{mcy}{U}{wncy}{m}{n}

\DeclareMathSymbol{\Sh}{\mathord}{mcy}{"58}

\def\p{\mathfrak p}
\def\O{\mathcal O}


%
\begin{document}
\begin{frontmatter}              

\title{Heights on algebraic curves}

\author[A]{\fnms{T. Shaska}\snm{}%
}
\author[B]{\fnms{L. Beshaj} \snm{}
\thanks{Part of this paper was written when this author was visiting Department of Mathematics at Princeton University. This authors wants to thanks Princeton University for their hospitality.}
}

\runningauthor{Beshaj/Shaska}
\address[A]{Oakland University, Rochester, MI, USA;  E-mail: shaska@oakland.edu}
\address[B]{Oakland University, Rochester, MI, USA;  E-mail: beshaj@oakland.edu}

\begin{abstract}
In these lectures we cover basics of the theory of heights starting with the heights in the projective space, heights of polynomials, and heights of the algebraic curves. We define the minimal height of binary forms and moduli height for algebraic curves and prove that   the moduli height of superelliptic curves $\mH (f) \leq c_0 \tilde H (f)$ where $c_0$ is a constant and $\tilde H$  the minimal height of the corresponding binary form.  For genus $g=2$ and 3 such constant is explicitly determined.  
 Furthermore, complete lists of curves of genus 2 and genus 3 hyperelliptic curves with height 1 are computed. 
\end{abstract}

\begin{keyword}
algebraic curves \sep heights \sep moduli height 
\end{keyword}


\end{frontmatter}



\section*{Introduction}

The heights of points on Abelian varieties have been used to prove important results on the theory of rational points on algebraic curves. In these lectures we give a quick review of some of the basic results of the classical theory of heights  and  introduce some new concepts of heights for algebraic curves with the intention of determining equations of curves with minimal height. 

The material of the first lecture is classic and can be found in any of the excellent books \cite{bombieri, silv-book, lang-2, se-1}.   We define affine and projective  heights on the projective space,   multiplicative and logarithmic heights, and absolute heights.  We describe Northcott's theorem, Kronecker's theorem, and Segre embedding. Our main goal is to investigate how the height of a point changes under a change of coordinate. We describe the formula for changing coordinates in Thm.~\ref{ch-coord}.

In the second lecture we cover the heights of polynomials, Gauss lemma on heights, Gelfand's inequality, and bounds on heights of homogenous polynomials acting on them by linear transformations on variables.  The main focus of this lecture is on the heights of binary forms. These are interesting polynomials because they give equations of hyperelliptic and superelliptic curves which are the focus of this Summer School. For any binary form $f$ we   provide bounds for $f^M$ when $M\in SL_2 (K)$.  This leads to the definition of  the \textit{minimal height} $\tilde H (f)$ and \textit{moduli height} $\mH (f)$ for binary forms. We prove that $\mH (f) \leq c_0 \tilde H (f)^{n_0}$ for any binary form $f$, where $c_0$ and $n_0$ are  constants depending only on the degree of the binary form $f$.

In the third lecture we focus on heights of algebraic curves.  
Our main focus is in providing equations for the algebraic curves with "small" coefficients as continuation of our previous work \cite{beshaj-1, sh-1 }.  Hence, the concept of height is the natural concept to be used.   For a genus $g\geq 2$ algebraic curve $\X_g$ defined over an algebraic number field $K$ we define the height $H_K (\X_g)$ and show that this is well-defined.  This is basically the minimum height among all curves which are isomorphic to $\X_g$ over $K$.  $\bar H_K (\X_g)$ is the height over the algebraic closure $\bar K$.  It must be noticed that our definition is on the isomorphism class of the curve and not on some equation of the curve. 
 We provide an algorithm to determine the height of a curve $C$ provided some equation for $C$.  This algorithm   is rather inefficient, but can be used for $g=2$ and $g=3$ hyperelliptic curves when the   coefficients of the initial equation of $C$ are not too large. 

The moduli height of a curve is the height in the projective space of the moduli point corresponding to the curve.  We prove that for a given constant $c$ there are only finitely many curves (up to isomorphism) of moduli height $\leq c$. 

A natural applications of the results of lecture 3 would be superelliptic curves.  Such curves have equation $y^n = f(x)$ and their isomorphism classes are determined by the $GL_2 (K)$-orbits on the space of degree $d$ binary forms $V_d$, where $d = \deg f$. Hence, we apply the results from lecture 2 to study the heights of such curves.

For a genus 2 curve with equation $y^2=f(x)$ the moduli height is bounded as follows
\[ \mH(f) \leq 2^{28} \cdot 3^9 \cdot 5^5 \cdot 7 \cdot 11 \cdot 13 \cdot  17 \cdot 43    \cdot H(f)^{10} \]
Moreover we show that there are precisely 230 genus 2 curves with height 1, from which 186 have automorphism group of order 2, 28 of them have automorphism group isomorphic to the Klein 4-group, and the rest have automorphism group of order $ > 4$.  All these curves are listed in Tables 1-4.

Similar computations are done for genus 3 hyperelliptic curves where $GL_2 (K)$-invariants $t_1, \dots , t_6$ are used as defined in \cite{sh-4}.  In the last part of this lecture we present some open problems and conjectures. 

For more references and classical results on the theory of heights the reader should check these timeless books   \cite{lang-2, se-1, se-2, se-3, Fa, bombieri, silv-book }.      \\

\noindent \textbf{Notation: } Throughout this paper by a curve we mean a smooth, irreducible algebraic curve.  Unless otherwise noted a "curve" $C$ means the isomorphism class of $C$ over some  field $K$.   We fix the following notation throughout this paper.\\

$K$ a number field,

$\O_K$ the ring of integers of $K$,

$v$ an absolute value of $K$,

$M_K$ the set of all absolute values  of $K$,

$M_{K}^0$ the set of non-Archimedean absolute values of $K$,  

$M_{K}^\infty$ the set of Archimedean absolute values of $K$,

$K_v$ the completion of $K$ at $v$,

$n_v$ the local degree $[K_v:\Q_v]$,

$G_\Q$ the Galois group $\mbox{Gal}(\overline \Q /\Q)$.


\newpage
\noindent \textbf{Part 1: Heights on the projective space} \\

In this lecture we define different heights of a point on a projective space, culminating in the Northcott and Kronecker theorems.  We discuss Segre's map and the d-uple  embedding with the intention of establishing a bound for the height of a point after a change of coordinates. 

\section{Heights on the projective space}
In this section we define the heights on the projective space over a number field $K$ and  give some basic properties of the heights function. 

Let $K$ be an algebraic number field and $[K:\Q]=n$. With $M_K$ we will denote the set of all absolute values in $K$.   For $v \in M_K$, the \textbf{ local degree at $v$}, denoted $n_v$ is 
\[n_v =[K_v:\Q_v]\]
where $K_v, \Q_v$ are the completions with respect to $v$. The following are true for any number field $K$, see \cite[pg. 171-172]{silv-book} for proofs.

i) \textbf{ Degree formula}. Let $L/K$ be an extension of number fields, and let $v \in M_K$ be an absolute value
on $K$. Then
\[\sum_{\substack{w \in M_L\\w|v}}[L_w:K_v]= [L:K]\]
ii) \textbf{Product formula}. Let $K$ be a number field, and let $x \in K^\star$. Then we say that $M_K$ satisfies the product formula if 
\[\prod_{v\in M_K}|x|_v=1\]\label{prod.formula}
Throughout this paper with $\overline \Q$ we will denote  the algebraic closure of $\Q$ and with $G_\Q:=\mbox{Gal}(\overline \Q /\Q)$. 

Given a point $P \in \P^n(\overline \Q)$ with homogenous coordinates $[x_0, \dots, x_n]$, the field of definition of $P$ is 
\[\Q(P)=\Q(x_o/x_j, \dots , x_n/x_j)\] 
for any $j$ such that $x_j \neq 0$.  

Let $K$ be a number field,  $\P^n(K)$ the projective space, and  $P \in \P^n(K)$ a point with homogenous coordinates $P=[x_0, \dots , x_n]$, for  $x_i \in K$. The \textbf{multiplicative heigh}t of $P$ is defined as follows
\[H_K(P) := \prod_{v \in M_K} \max\left\{\frac{}{}|x_0|_v^{n_v} , \dots, |x_n|_v^{n_v}\right \}\]
The \textbf{logarithmic height} of the point $P$ is defined as follows
\[h_K(P) := \log H_K(P)=   \sum_{v \in M_K}   \max_{0 \leq j \leq n}\left\{\frac{}{}n_v \cdot  \log  |x_j|_v \right\}.\]
\begin{exa} Let $P=[x_0, \dots , x_n] \in \P^n(\Q)$.  It is clear that $P$ will have a representative $[y_0, \dots, y_n]$ such that $y_i \in \Z$ for all $i$ and $\gcd(y_0, \dots, y_n)=1$.  With such representative for the coordinates of $P$, the non-Archimedean absolute values give no contribution to the height, and we obtain
\[H_\Q(P)=  \max_{0 \leq j \leq n}\left\{\frac{}{}|x_j|_\infty \right\}\]
\end{exa}
Next we will give some basic properties of heights functions.
\begin{lem}Let $K$ be a number field and $P\in \P^n(K)$. Then the following are true:

i) The height $H_K(P)$ is well defined, in other words it does not depend on the choice of homogenous coordinates of $P$

ii) $H_K(P) \geq 1$.
\end{lem}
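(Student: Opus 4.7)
My plan is to prove the two parts separately, with (i) relying essentially on the product formula stated earlier in the section and (ii) following from (i) together with a judicious choice of coordinate.

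For part (i), a set of homogeneous coordinates for $P$ is determined up to multiplication by a common scalar $\lambda \in K^\star$. So I would fix an alternative representative $[\lambda x_0, \ldots, \lambda x_n]$ and compute how $H_K$ transforms. Since $|\lambda|_v^{n_v}$ is a nonnegative scalar independent of $j$, it pulls out of the max:
\[
\max_{0 \le j \le n} |\lambda x_j|_v^{n_v} \;=\; |\lambda|_v^{n_v} \cdot \max_{0 \le j \le n} |x_j|_v^{n_v}.
\]
Taking the product over $v \in M_K$, the definition of $H_K$ gives
\[
H_K([\lambda x_0,\ldots,\lambda x_n]) \;=\; \Bigl(\prod_{v \in M_K} |\lambda|_v^{n_v}\Bigr) \cdot H_K([x_0,\ldots,x_n]).
\]
By the product formula the first factor equals $1$, so the two heights agree. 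This step is essentially bookkeeping; the substantive input is the product formula cited above.

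For part (ii), I would use the freedom of rescaling granted by (i). Pick an index $j$ with $x_j \neq 0$ and rescale by $\lambda = x_j^{-1}$ so that the $j$-th coordinate of the new representative equals $1$. Then for every $v \in M_K$,
\[
\max_{0 \le i \le n} |x_i/x_j|_v^{n_v} \;\geq\; |x_j/x_j|_v^{n_v} \;=\; 1,
\]
and a product of factors each $\geq 1$ is itself $\geq 1$. Alternatively (without invoking (i)), the same index $j$ gives $\max_i |x_i|_v^{n_v} \geq |x_j|_v^{n_v}$, so $H_K(P) \geq \prod_v |x_j|_v^{n_v} = 1$ by the product formula again.

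I do not anticipate any real obstacle: both claims are essentially direct consequences of the product formula, once one notices that scalar factors pull out of the coordinate-wise maximum. The only point to watch is to check that the exponents $n_v$ do not break the product formula when $|\lambda|_v$ is extracted; this is fine because the standard normalization of the absolute values used here is exactly the one for which $\prod_v |\lambda|_v^{n_v}=1$ holds for every $\lambda \in K^\star$.
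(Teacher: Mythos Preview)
Your proof is correct and follows essentially the same approach as the paper: for (i) you pull the scalar $|\lambda|_v^{n_v}$ out of the maximum and apply the product formula, and for (ii) you normalize one coordinate to $1$ so that every factor in the defining product is at least $1$. Your alternative argument for (ii) via $\max_i |x_i|_v^{n_v} \geq |x_j|_v^{n_v}$ and the product formula is a nice variant but not substantively different.
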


\proof  
i)  Let $P=[x_0, \dots , x_n] \in \P^n(K)$. Since $P$ is a point in the projective space, any other choice of homogenous coordinates for $P$ has the form $[\lambda x_0, \dots, \lambda x_n]$, where  $ \lambda \in K^*$. Then
\[
\begin{split}
H_K\left([\lambda x_0, \dots, \lambda x_n]\right) &= \prod_{v \in M_K} \max_{0 \leq i \leq n}  \left\{\frac{}{} |\lambda x_i|_v^{n_v}\right\} =  \prod_{v \in M_K} |\lambda |_v^{n_v} \max_{0 \leq i \leq n}  \left\{\frac{}{} | x_i|_v^{n_v}\right\}\\
& =\left(   \prod_{v \in M_K} |\lambda |_v^{n_v}\right)\cdot \left(  \prod_{v \in M_K} \max_{0 \leq i \leq n}  \left\{\frac{}{} |x_i|_v^{n_v}\right\} \right)\\
\end{split} 
\]
Applying the product formula we have
\[H_K\left([\lambda x_0, \dots, \lambda x_n]\right) = \prod_{v \in M_K} \max_{0 \leq i \leq n} \left\{\frac{}{} |x_i|_v^{n_v}\right\}= H_K(P)\]
And this completes the proof of the first part.

ii) For every point $P \in \P^n(K)$ we can find a representative of $P$ with homogenous coordinates such that one of the coordinates is 1. Let us reorder the coordinates of $P= [1, x_1, \dots, x_n]$ and calculate the height.
\[\begin{split} 
H_K(P) &=   \prod_{v \in M_K} \max\left\{\frac{}{}|x_0|_v^{n_v} , \dots, |x_n|_v^{n_v} \right\} =   \prod_{v \in M_K}  \max\left\{\frac{}{} 1, |x_1|_v^{n_v} , \dots, |x_n|_v^{n_v}\right\}  
\end{split}
 \] 
Hence,  every factor in the product is at least 1.  Therefore, $H_K(P) \geq 1$. 
\qed
 
\begin{lem}\label{lem_1}
Let $P \in \P^n(K)$ and $L/K$ be a finite extension. Then,
\[H_L(P)=H_K(P)^{[L:K]}.\]
\end{lem}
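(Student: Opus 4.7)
The plan is to reduce the statement to a direct application of the degree formula stated in part (i) at the beginning of the section. Since $P\in\P^n(K)$, we may choose homogeneous coordinates $[x_0,\dots,x_n]$ with $x_i\in K$, and by the well-definedness established in the previous lemma, we can compute $H_L(P)$ using these same coordinates.

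First I would unfold the definition of $H_L(P)$ as a product over $w\in M_L$ and partition $M_L$ according to which $v\in M_K$ each $w$ lies over. The key observation is that for $x\in K$ and $w\mid v$, the absolute value $|x|_w$ coincides with $|x|_v$, so the inner maximum $\max_i |x_i|_w$ depends only on $v$ and equals $\max_i |x_i|_v$. Meanwhile, the local degree factors as $n_w=[L_w:\Q_v]=[L_w:K_v]\cdot[K_v:\Q_v]=[L_w:K_v]\cdot n_v$, so that
\[
\max_{0\le i\le n}|x_i|_w^{n_w} \;=\; \Bigl(\max_{0\le i\le n}|x_i|_v^{n_v}\Bigr)^{[L_w:K_v]}.
\]

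Next I would collect the contribution of all $w$ above a fixed $v$. Using the exponent identity above and the degree formula $\sum_{w\mid v}[L_w:K_v]=[L:K]$, the product over $w\mid v$ telescopes to
\[
\prod_{w\mid v}\max_{0\le i\le n}|x_i|_w^{n_w} \;=\; \Bigl(\max_{0\le i\le n}|x_i|_v^{n_v}\Bigr)^{[L:K]}.
\]
Finally, taking the product over all $v\in M_K$ yields $H_L(P) = H_K(P)^{[L:K]}$, which is the desired equality.

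The computation is essentially formal once one has the degree formula; there is no real obstacle. The only point that requires mild care is the bookkeeping that $|x_i|_w=|x_i|_v$ for $x_i\in K$ (so that the inner $\max$ does not depend on the particular $w$ above $v$) and the consistent use of the normalization $n_w=[L_w:\Q_v]\cdot[K_v:\Q_v]^{-1}\cdot n_v\cdot[L_w:K_v]$; once this is written cleanly, the proof is a one-line consequence of the degree formula.
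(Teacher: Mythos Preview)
Your proof is correct and follows exactly the same route as the paper: partition $M_L$ by the places $v\in M_K$ lying below, use that $|x_i|_w=|x_i|_v$ for $x_i\in K$, and apply the degree formula $\sum_{w\mid v}[L_w:K_v]=[L:K]$ to collapse the exponents. The only blemish is the garbled parenthetical identity in your last sentence (as written it says $n_w=n_w\cdot[L_w:K_v]$); the clean statement you actually use, and which the paper uses, is simply $n_w=[L_w:K_v]\cdot n_v$.
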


\begin{proof}
 Let $L$ be a finite extension of $K$ and $M_L$ the corresponding  set of absolute values. Then,  
\[\begin{split}
H_L(P) &= \prod_{w \in M_L} \max_{0 \leq i \leq n}\left\{\frac{}{} |x_i|_w^{n_w} \right\}  =  \prod_{v \in M_K} \prod_{\substack{w \in M_L\\  w|v}}   \max_{0 \leq i \leq n}\left\{\frac{}{} |x_i|_v^{n_w} \right\}, \qquad  \text{ since $x_i \in K$ } \\
& = \prod_{v \in M_K}   \max_{0 \leq i \leq n}\left\{\frac{}{} |x_i|_v^{n_v \cdot [L:K] }\right \},\qquad \text{ (product formula)}\\
&= \prod_{v \in M_K}   \max_{0 \leq i \leq n}\left\{\frac{}{} |x_i|_v^{ n_v} \right\}^{[L:K]}=H_K(P)^{[L:K]}
\end{split} \]
This completes the proof. 
\end{proof}

Using Lemma \ref{lem_1}, part ii),  we can define the height on $\P^n(\overline \Q)$. The height of a point on $\P^n(\overline \Q)$ is called the \textbf{absolute (multiplicative) height} and is the function 
\[
\begin{split}
H: \P^n(\bar \Q) & \to [1, \infty)\\
H(P)&=H_K(P)^{1/[K:\Q]},
\end{split}
\]
where  $P \in \P^n(K)$, for any $K$.  The \textbf{absolute (logarithmic) height} on $\P^n(\overline \Q)$  is the function 
\[
\begin{split}
h: \P^n(\bar \Q) & \to [0, \infty)\\
h(P)&= \log H(P)= \frac 1 {[K:\Q]}h_K(P).
\end{split}
\]
\begin{exa} Let $\a \in K$ be an algebraic number. The \textbf{height} of $\a \in K$ is the height of the corresponding projective point $(\a, 1) \in \P^1(K)$. Thus,
\[H_K(\a)= \prod_{v \in M_K} \max \left\{\frac{}{} 1, |\a|_v^{n_v}\right\}\]
and similarly for $h_K(\a), H(\a), h(\a)$.
\end{exa}

\begin{lem}\label{lem_galois_conj}
The height is invariant under Galois conjugation. In other words, for  $P \in \P^n(\overline \Q)$ and $\sigma \in G_{ \Q}$ we have $H(P^\sigma) = H(P)$.
\end{lem}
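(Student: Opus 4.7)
The plan is to reduce the claim to a statement about how Galois conjugation permutes absolute values on a fixed number field containing the coordinates of $P$, after which the equality $H(P^\sigma)=H(P)$ falls out by reindexing the product that defines the multiplicative height.

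First I would choose a number field $K$ so that $P\in\P^n(K)$, writing $P=[x_0,\dots,x_n]$ with $x_i\in K$. Then $K' := \sigma(K)$ is another number field of the same degree over $\Q$, namely $[K':\Q]=[K:\Q]$, and $P^\sigma=[\sigma(x_0),\dots,\sigma(x_n)]\in\P^n(K')$. Consequently the normalization $H(\cdot)=H_{(\cdot)}(\cdot)^{1/[(\cdot):\Q]}$ used for the absolute height has the same exponent on both sides, so it suffices to prove
\[
H_{K'}(P^\sigma)=H_K(P).
\]

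Next I would set up the bijection between $M_K$ and $M_{K'}$ that underlies the proof. For each $v\in M_K$, define $v'\in M_{K'}$ by $|y|_{v'}:=|\sigma^{-1}(y)|_v$ for $y\in K'$; this is an absolute value on $K'$, the map $v\mapsto v'$ is a bijection $M_K\to M_{K'}$, and it sends Archimedean to Archimedean and non-Archimedean to non-Archimedean. Moreover $\sigma$ extends to a topological isomorphism $K_v\xrightarrow{\sim}K'_{v'}$ of the completions, so the local degrees agree: $n_{v'}=[K'_{v'}:\Q_{v'}]=[K_v:\Q_v]=n_v$ (here $\Q_{v'}=\Q_v$ because $v'$ and $v$ restrict to the same absolute value on $\Q$).

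With this bijection in hand the proof is a reindexing:
\[
H_{K'}(P^\sigma)=\prod_{v'\in M_{K'}}\max_{0\le i\le n}\bigl\{|\sigma(x_i)|_{v'}^{n_{v'}}\bigr\}=\prod_{v\in M_K}\max_{0\le i\le n}\bigl\{|x_i|_v^{n_v}\bigr\}=H_K(P),
\]
using $|\sigma(x_i)|_{v'}=|x_i|_v$ and $n_{v'}=n_v$. Taking $[K:\Q]$-th roots gives $H(P^\sigma)=H(P)$.

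The only step that needs real care is verifying the bijection of places and equality of local degrees; everything else is a mechanical reindexing of the definition. I expect that to be the main (mild) obstacle, since it amounts to checking that Galois automorphisms act on the set of places in a degree-preserving way, a classical fact from algebraic number theory that I would invoke rather than reprove in detail.
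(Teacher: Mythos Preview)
Your proof is correct and follows essentially the same approach as the paper: pick a number field $K$ containing the coordinates, use the bijection of places $M_K\to M_{\sigma(K)}$ induced by $\sigma$ together with equality of local degrees, and reindex the product defining the height. The only cosmetic difference is that the paper chooses $K$ to be a finite Galois extension of $\Q$ (so in fact $K^\sigma=K$), whereas you allow an arbitrary number field; the argument is unaffected.
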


\proof   Let $P =[x_0, \dots, x_n] \in \P^n(\overline \Q)$. Let $K$ be a finite Galois extension of $\Q$ such that $P \in \P^n(K)$. Let $\sigma \in G_\Q$. Then $\sigma$ gives an isomorphism 
\[\sigma: K \to K^\sigma\]
and also identifies the sets $M_K$, and $M_{K^\sigma}$ as follows
\[
\begin{split}
\sigma: M_K &\to M_{K^\sigma}\\
v &\to v^\sigma
\end{split}
\]
Hence, for every $x \in K$ and $v \in M_K$, we have $|x^\sigma|_{v^\sigma} = |x|_v$.   Obviously $\sigma$ gives as well an isomorphism 
\[\sigma: K_v \to K^\sigma_{v^\sigma}\]
Therefore $n_v= n_{v^\sigma}$, where $n_{v^\sigma} = [ K^\sigma_{v^\sigma}: \Q_v]$. Then 
\[
\begin{split}
H_{K^\sigma}(P^\sigma) &= \prod_{w \in M_{K^\sigma}} \max_{0 \leq i \leq n} \left\{\frac{}{}  |x_i^\sigma|_{w}^{n_{w}}\right\}\\
&=  \prod_{v \in M_{K}} \max_{0 \leq i \leq n} \left\{\frac{}{}  |x_i^\sigma|_{v^\sigma}^{n_{v^\sigma}}\right\}=  \prod_{v \in M_{K}} \max_{0 \leq i \leq n} \left\{\frac{}{}  |x_i|_v^{n_v}\right\}=H_K(P)
\end{split}
\]
This completes the proof.
\qed
 
The following is known in the literature as Northcott's theorem.

\begin{thm}[Northcott]  \label{thm_finite}
Let $c_0$ and $d_0$ be constants. Then the set 
\[\{P \in \P^n(\overline \Q): H(P) \leq c_0 \text{ and } [\Q(P):\Q] \leq d_0\}\]
contains only finitely many points. In particular for any number field $K$
\[\{P \in \P^n(K): H_K(P) \leq c_0 \} \]
is a finite set.
\end{thm}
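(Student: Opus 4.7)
The plan is to reduce the statement to finiteness of rational points of bounded height in projective space, through two successive reductions. For the second (``in particular'') assertion, observe that if $[K:\Q]=d$ then for $P\in\P^n(K)$ we have $H(P)=H_K(P)^{1/d}\leq c_0^{1/d}$ and $[\Q(P):\Q]\leq d$, so it follows from the main statement applied to the constants $(c_0^{1/d},d)$.

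For the main statement, I would first reduce to the one-dimensional case. Given $P=[x_0,\dots,x_n]\in\P^n(\bar\Q)$, normalize so some $x_j=1$; by comparing the defining maxima at each place one has $H(x_i)\leq H(P)\leq c_0$, and clearly $[\Q(x_i):\Q]\leq[\Q(P):\Q]\leq d_0$. So if the set
\[
S(c,d):=\bigl\{\alpha\in\bar\Q:H(\alpha)\leq c,\ [\Q(\alpha):\Q]\leq d\bigr\}
\]
is finite for every pair $(c,d)$, then each of the $n+1$ normalized coordinate slots has only finitely many choices, and the original set is finite.

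The heart of the proof is the finiteness of $S(c_0,d_0)$. For $\alpha\in S(c_0,d_0)$ of degree $d\leq d_0$, let $\alpha_1=\alpha,\dots,\alpha_d$ be the Galois conjugates of $\alpha$. By Lemma~\ref{lem_galois_conj} we have $H(\alpha_i)=H(\alpha)\leq c_0$ for every $i$. I would form the minimal polynomial
\[
f_\alpha(X)=\prod_{i=1}^d(X-\alpha_i)=X^d+\sum_{j=0}^{d-1}(-1)^{d-j}\,e_{d-j}(\alpha_1,\dots,\alpha_d)\,X^j\in\Q[X],
\]
and bound each coefficient place by place. Using the ultrametric (respectively standard) triangle inequality one gets $|x\pm y|_v\leq C_v\max(|x|_v,|y|_v)$ with $C_v=1$ at non-Archimedean $v$ and $C_v=2$ at Archimedean $v$, so each elementary symmetric expression satisfies $|e_k(\alpha_1,\dots,\alpha_d)|_v\leq C_v^d\prod_i\max(1,|\alpha_i|_v)$. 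Assembling the rational point $Q_\alpha=[1:a_{d-1}:\cdots:a_0]\in\P^d(\Q)$ formed from the coefficients of $f_\alpha$, taking the product over all places of a common Galois field of definition $K$, and using the local-degree bookkeeping together with Lemma~\ref{lem_1} and the Galois invariance of height, yields a uniform bound of the form $H_\Q(Q_\alpha)\leq C(d)\,H(\alpha)^{d}$ for a constant depending only on $d$.

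To conclude, the example following the definition of $H_\Q$ shows that every point of $\P^d(\Q)$ of height at most a given bound has a representative by coprime integers all bounded in absolute value, so there are only finitely many such points. This gives finitely many minimal polynomials of degree at most $d_0$, hence finitely many roots, hence $S(c_0,d_0)$ is finite. The main obstacle is the passage from the pointwise estimate on the conjugates to the uniform bound on $H_\Q(Q_\alpha)$: one must recognize that the product $\prod_v\prod_i\max(1,|\alpha_i|_v)^{n_v}$ over places of $K$ is, after invoking Lemma~\ref{lem_galois_conj} to identify the contributions of the different conjugates, essentially $H_K(\alpha)^d=H(\alpha)^{d[K:\Q]}$, so that the field-of-definition dependence cancels against the $[K:\Q]$-th root implicit in the absolute height and leaves a bound intrinsic to $\alpha$.
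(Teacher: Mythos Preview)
Your argument is correct and follows essentially the same route as the paper: reduce to $n=1$ by normalizing a coordinate to $1$ and bounding each coordinate's height and degree, then bound the height of the coefficient vector of the minimal polynomial via placewise estimates on the elementary symmetric functions together with Galois invariance of the height, and finish by the finiteness of rational projective points of bounded height. The only cosmetic differences are that you treat the ``in particular'' clause explicitly and obtain the slightly sharper exponent $H(\alpha)^d$ rather than the paper's $H(\alpha)^{d^2}$, neither of which affects the structure of the proof.
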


\proof   Let $P=[x_0, \dots, x_n] \in \P^n(\overline \Q)$ be a point such that some $x_{i_0}=1$. Then for any absolute value $v$, and for all $0 \leq i \leq n$ we have
\[\max\left\{\frac{}{}|x_0|_v^{n_v} , \dots, |x_n|_v^{n_v}\right \} \geq \max\left\{\frac{}{}1, |x_i|_v^{n_v}\right\} . \]  
Let $\Q(P)$ be the  field of definition of $P$. Let us first estimate  $H_{\Q(P)}(P)$.
\[
\begin{split}
H_{\Q(P)}(P)&= \prod_{v \in M_{\Q(P)}} \max\left\{\frac{}{}|x_0|_v^{n_v} , \dots, |x_n|_v^{n_v} \right\} \\
 & \geq  \prod_{v \in M_{\Q(P)}} \max\left\{\frac{}{}1, |x_i|_v^{n_v}  \right\}, \, \, \text{ for all $0 \leq i \leq n$. }\\
 & = H_{\Q(P)}(x_i), \, \, \text{ for all $0 \leq i \leq n$. }  \\
\end{split}
\]
Taking the $[\Q(P):\Q]$-th root we have $ H(x_i) \leq H(P)$,  for all $0 \leq i \leq n$.  Clearly, $\Q(x_i) \subset Q(P)$, for all $0 \leq i \leq n$ and therefore $[\Q(x_i):\Q] \leq [\Q(P):\Q]$.  Then for all $0 \leq i \leq n$ we have,  
\[ H(x_i) \leq c_0 \text{ and } [\Q(x_i):\Q] \leq d_0.\] 
It suffices to show that for each $1\leq d\leq d_0$ the set 
\[\{x \in \overline \Q: H(x) \leq c_0 \text{ and } [\Q(x):\Q] = d\}\]
is finite (i.e we are considering the case when $n=1$). 

Assume, for some $x\in \overline \Q$, we have $[\Q(x):\Q]=d$.  Let $x_1, \dots, x_d$ be the $d$ conjugates of $x$ in $\Q$.  Then the minimal polynomial of $x$ over $\Q$ is 
\[f_x(t)= \min(x, \Q, t)= \prod_{j=1}^d(t-x_j)=\sum_{r=0}^d(-1)^rs_r(x)t^{d-r}.\]
Then for any absolute value $v\in M_{\Q(x)}$ we have
\[
\begin{split}
|s_r(x)|_v &=\left| \sum_{1\leq i_1 \leq \dots \leq i_r \leq d} x_{i_1}  \cdots x_{i_r}  \right|_v  \leq |c(r, d)|_v \max_{1\leq i_1 \leq \dots \leq i_r \leq d} \left\{ \frac{}{}|x_{i_1}  \cdots x_{i_r}|_v \right\}  \leq \\
&\leq |c(r, d)|_v \max_{1\leq i \leq d}\left\{ \frac{}{}|x_i|_v^r \right\} \leq |c(r, d)|_v \prod_{i =1}^{d}\left\{ \frac{}{}|x_i|_v \right\}^r 
\end{split}
\]
Where, $c(r, d)$ represents the number of terms in a symmetric polynomial with degree $r$ and $d$ variables, and is $\binom{d}{r}$. Then, 
\[
|c(r, d)|_v= \left \{ 
\begin{split}
&\binom{d}{r}  \qquad \, \, \, \text{ if  $v$ is Archimedean }\\
&  \,\,\,\, \,  1 \hspace{12mm} \text{ if  $v$ in non-Archimedean} 
\end{split}
\right.
\]
Hence, $c(r, d) = \binom{d}{r} \leq 2^d$  when $v$ is Archimedean, and 1 if $v$ in non-Archimedean. Now let us take the maximum over all symmetric polynomials. We have
\[
\begin{split}
\max \left\{ \frac{}{}|s_0(x)|_v, \dots, |s_d(x)|_v\right\} & \leq |s_i(x)|_v, \qquad \text{ (for some $1 \leq i \leq d$)}\\
&\leq|c( d)|_v \prod_{i=1}^d\max\left\{ \frac{}{}1, |x_i|_v\right\}^d,\\
\end{split}
\]
where, as above $|c( d)|_v = \binom{d}{r}$ when $v$ is Archimedean and 1 otherwise. Now we can calculate the height of $(s_0(x), \dots, s_d(x))$.
\[
\begin{split} 
H_{\Q(x)} (s_0(x), \dots, s_d(x))    &= \prod_{v\in M_\Q(x)}\max_{0 \leq i \leq d}\left\{ \frac{}{}|s_i(x)|_v^{n_v}\right\} \leq \prod_{v\in M_\Q(x)} |c(d)|_v^{ n_v } \prod_{i=1}^d\max_{ i}\left\{ \frac{}{}|x_i|_v^{n_v}, 1\right\}^d\\
\end{split}
\]
Using the degree formula 
\[ \prod_{v \in M_{\Q(x)}} |c(d)|_v^{n_v}=  \prod_{v \in M_{\Q(x)}^\infty} |c(d)|_v^{n_v} = c(d)^{[\Q(x):\Q]}\leq 2^{d^2}\]
we have
\[H_{\Q(x)} (s_0(x), \dots, s_d(x))\leq 2^{d^2} \prod_{i=1}^d H_{\Q(x)}  (x_i)^d\]
Taking, $[\Q(x):\Q]$-th root of both sides we have
\[H(s_0(x), \dots, s_d(x)) \leq 2^d \prod_{i=1}^d H  (x_i)^d\]
But the $x_i$'s are conjugates and by Lemma~\ref{lem_galois_conj} they all have the same height. Hence,
\[
\begin{split} 
H(s_0(x), \dots, s_d(x))&\leq 2^d H(x)^{d^2} \leq (2c_0^d)^d \, \, \,\,\, \text{ since $H(x) \leq c_0$ }
\end{split}
\]
Since the $s_i$'s are in $\Q$, is clear that for a given $c$ and $d$ there are only finitely many possibilities for the polynomial $f_x(t)$, and therefore only finitely many possibilities for $x$. Hence the set
\[\{x \in \overline \Q: H(x) \leq c_0 \text{ and } [\Q(x):\Q] = d\}\]
is finite. 
\qed

\begin{lem}[Kronecker's theorem]  Let $K$ be a number field,  and let $P=[x_0, \dots , x_n] \in \P^n(K)$. Fix any $i_0$ with $x_{i_0} \neq 0$. Then $H(P)= 1$ if and only if the ratio $x_j/x_{i_0}$ is a root of unity or zero for every $0 \leq j \leq n$. 
\end{lem}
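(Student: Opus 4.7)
The plan is to split the equivalence into two directions after using projective invariance to normalize $x_{i_0} = 1$, so that (after reordering) the coordinates of $P$ are $1, x_1, \ldots, x_n$. The goal then becomes: $H(P) = 1$ iff each $x_j$ is either $0$ or a root of unity.

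For the easy ($\Leftarrow$) direction, if $x_j$ is $0$ or a root of unity, then from $x_j^N = 1$ we get $|x_j|_v^N = 1$, hence $|x_j|_v = 1$, for every $v \in M_K$ (and trivially $|x_j|_v = 0$ if $x_j = 0$). Therefore each local factor $\max\{1, |x_1|_v^{n_v}, \ldots, |x_n|_v^{n_v}\}$ equals $1$, and $H_K(P) = 1$.

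For the ($\Rightarrow$) direction, I would first reduce to the one-variable case. For each index $j$ the local factor for the projective point $[1, x_j] \in \P^1(K)$ is bounded by the corresponding local factor for $P$, so $H_K(x_j) \leq H_K(P)$. Combined with the lower bound $H_K(x_j) \geq 1$ from the preceding lemma, the hypothesis $H(P) = 1$ forces $H(x_j) = 1$ for every $j$ with $x_j \neq 0$. Next, fix such a nonzero $x_j$ and consider its sequence of powers $x_j, x_j^2, x_j^3, \ldots$. The identity $\max\{1, |x_j^m|_v\} = \max\{1, |x_j|_v\}^m$ gives $H(x_j^m) = H(x_j)^m = 1$ for every $m \geq 1$, while $\Q(x_j^m) \subseteq \Q(x_j) \subseteq K$ keeps the degrees $[\Q(x_j^m):\Q]$ uniformly bounded by $[K:\Q]$. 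Northcott's theorem (Theorem~\ref{thm_finite}) then forces $\{x_j^m : m \geq 1\}$ to be finite, so pigeonhole produces integers $1 \leq m < m'$ with $x_j^m = x_j^{m'}$, whence $x_j^{m'-m} = 1$ and $x_j$ is a root of unity.

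The main obstacle is the ($\Rightarrow$) direction; the conceptual input is the multiplicativity $H(x^m) = H(x)^m$, which turns a height-one element into an element with an infinite orbit of bounded height and bounded degree, precisely the situation where Northcott's finiteness theorem can be invoked to produce a period. The rest is bookkeeping with the definition of $H_K$ and the product/degree formulas already established.
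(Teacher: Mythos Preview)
Your proof is correct and follows essentially the same strategy as the paper: normalize one coordinate to $1$, handle the easy direction via $|x_j|_v=1$, and for the hard direction use multiplicativity under powers together with Northcott's theorem to force a repetition and hence a root of unity. The only difference is organizational: you first reduce to the one-variable case via $H(x_j)\le H(P)$ and then run the power argument in $\P^1$, whereas the paper applies the power argument directly to the full point $P^r=[1,x_1^r,\dots,x_n^r]$ in $\P^n(K)$ and invokes Northcott there, which is slightly more economical since it avoids the coordinatewise reduction step.
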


\proof   Let  $P=[x_0, \dots , x_n] \in \P^n(K)$. Without loss of generality we can divide the coordinates of $P$ by $x_{i_0}$ and then reorder them. Assume, $P=[1, y_1, \dots, y_n]$ where $y_1, \dots, y_n$ are of the form $x_j/x_{i_0}$. If $y_l$   is a root of unity for every $1 \leq l \leq n$ then $|y_l|_v=1$ for every $v \in M_K$. Hence, $H(P)=1$.

Assume $H(P)=1$. Let $P^r=[x_0^r, \dots, x_n^r]$, for $r=1, 2, 3 \dots$. Then, from the definition of the height is clear that $H(P^r)=H(P)^r$, for every $r\geq 1$.  But $P^r \in \P^n(K)$ and by Theorem~\ref{thm_finite} we have that
\[\{P^r \in \P^n(K): H_K(P^r) \leq c\} \]
is a finite set. In this case $c=1$ and therefore  the sequence $P, P^2, P^3, \dots$ contains only finitely many distinct points. Choose integers $s>r\geq 1$ such that $P^s=P^r$. This implies that for each $1\leq j\leq n$ we have $x_j^s=x_j^r$. Therefore, $x_j^{s-r}=1$, where $s-r >0$. Therefore, each $x_j$ is a root of unity or is zero. 
\qed

\section{Segre map and $d$-uple embedding}

Let $m, n \geq 1$ and let $N= (n+1)(m+1)-1$. The \textbf{Segre map} is the map 
\[
\begin{split}
S_{n, m} \, : \, \,  \P^n(\bQ) \times \P^m(\bQ)   &\to  \P^N(\bQ) \\
 (P, Q) &\to  [x_0y_0, x_0y_1, \dots, x_iy_j, \dots, x_ny_m] 
\end{split}
\]
where $P= [x_0, \dots, x_n] \in \P^n(\bQ) $ and $Q=[y_0, \dots, y_m] \in \P^m(\bQ)$. The Segre maps are morphisms and give embeddings of the product $\P^n(\bQ) \times \P^m(\bQ)$ into $\P^N(\bQ)$. 
Next we will see how some of the properties of the heights are carried over through Segre embeddings.

\begin{lem}\label{exa_segre}
Let $S_{n,m}$ be the Segre embedding, $P \in \P^n(\bQ)$, and  $Q \in \P^m(\bQ)$. Then,
\[H(S_{n,m}(P,Q))=H(P) \times H(Q).\]
\end{lem}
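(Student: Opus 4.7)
The plan is to reduce the statement to a pointwise (per absolute value) identity and then multiply it out. First I would choose a number field $K$ large enough that both $P\in\P^n(K)$ and $Q\in\P^m(K)$, and write $P=[x_0,\dots,x_n]$, $Q=[y_0,\dots,y_m]$. Then $S_{n,m}(P,Q)$ has homogeneous coordinates $(x_iy_j)_{0\le i\le n,\ 0\le j\le m}$, so by definition
\[
H_K(S_{n,m}(P,Q)) \;=\; \prod_{v\in M_K}\, \max_{i,j}\bigl\{|x_iy_j|_v^{n_v}\bigr\}.
\]

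The key observation, which I would isolate as the one real step, is the following elementary fact: for any absolute value $v$ and any non-negative reals $a_0,\dots,a_n$ and $b_0,\dots,b_m$,
\[
\max_{i,j}\bigl\{a_i\,b_j\bigr\} \;=\; \Bigl(\max_i a_i\Bigr)\cdot\Bigl(\max_j b_j\Bigr).
\]
Applied with $a_i=|x_i|_v$ and $b_j=|y_j|_v$, and then raised to the $n_v$-th power (using multiplicativity of $|\cdot|_v$), this gives
\[
\max_{i,j}\bigl\{|x_iy_j|_v^{n_v}\bigr\} \;=\; \max_i\bigl\{|x_i|_v^{n_v}\bigr\}\cdot \max_j\bigl\{|y_j|_v^{n_v}\bigr\}.
\]

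Substituting into the product over $v\in M_K$ splits $H_K(S_{n,m}(P,Q))$ as $H_K(P)\cdot H_K(Q)$. Finally, taking $[K:\Q]$-th roots on both sides yields
\[
H(S_{n,m}(P,Q)) \;=\; H(P)\cdot H(Q),
\]
using the definition of the absolute height and the fact, guaranteed by Lemma \ref{lem_1}, that this is independent of the choice of $K$. There is no serious obstacle here: the whole content is the pointwise factorization of the max, and the rest is bookkeeping with the product formula already set up in the earlier lemmas.
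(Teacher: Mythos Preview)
Your argument is correct and follows essentially the same route as the paper: choose a common number field $K$, prove the pointwise identity $\max_{i,j}|x_iy_j|_v=(\max_i|x_i|_v)(\max_j|y_j|_v)$ for each $v\in M_K$, multiply over $v$ to get $H_K(S_{n,m}(P,Q))=H_K(P)\,H_K(Q)$, and then pass to absolute heights by taking the $[K:\Q]$-th root.
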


\proof
Let $K$ be some number field such that  $P \in \P^n(K)$, and $Q \in \P^m(K)$, and $R=[z_0, \dots ,z_N]=S_{n,m}(P,Q)\in\P^N(K)$. For every absolute value $v \in M_K$ the following is true
\[
\begin{split}
\max_{0 \leq l \leq N}\left\{\frac{}{}|z_l|_v \right\}&=\max_{\substack{0 \leq i \leq n\\ 0 \leq j \leq m}} \left\{\frac{}{}|x_iy_j|_v\right\} \hspace{6mm} \text{ (by definition of Segre map)}\\
&=\max_{\substack{0 \leq i \leq n\\ 0 \leq j \leq m}}\left\{\frac{}{}|x_i|_v \cdot |y_j|_v \right\}   \text{ (by absolute value properties)}\\
&=\left( \max_{0 \leq i \leq n}\left\{\frac{}{}|x_i|_v \right\}\right) \cdot \left( \max_{0 \leq j \leq m}\left\{\frac{}{}|y_j|_v \right\}\right) 
\end{split}
\]
Let us calculate 
\[
\begin{split}
H_K(S_{n,m}(P,Q))&= \prod_{v \in M_K}\max_{0\leq l\leq N}\left\{\frac{}{}|z_l|_v^{n_v}\right\}= \prod_{v \in M_K} \left( \max_{0 \leq i \leq n} \left\{\frac{}{}|x_i|_v^{n_v} \right\}\right) \cdot \left( \max_{0 \leq j \leq m}\left\{\frac{}{}|y_j|_v^{n_v}\right\} \right)\\
&= \prod_{v \in M_K} \left( \max_{0 \leq i \leq n}\left\{\frac{}{}|x_i|_v^{n_v} \right\}\right) \cdot \prod_{v \in M_K} \left( \max_{0 \leq j \leq m}\left\{\frac{}{}|y_j|_v^{n_v}\right\} \right)= H_K(P)\cdot H_K(Q)
\end{split}
\]
Taking $[K:\Q]$-root  of both sides we obtain the desired result.
\qed


Let $P=[x_0, \dots, x_n] \in \P^n(\bQ)$. Let $M_0(x), \dots, M_N(x)$ be the complete collection of monomials of degree $d$ in the variable $x=(x_0, \dots, x_n)$. Note that $N$ is the number of monomials of degree $d$ in $n+1$ variables minus 1,   hence $N= \binom{n+d}{n}-1.$

Then, the map 
\[
\begin{split}
\phi_d: \P^n(\bQ) &\to \P^N(\bQ)\\
P &\to [M_0(x), \dots, M_N(x)]
\end{split}
 \]
is called the \textbf{ $d$-uple embedding}  of $\P^n(\bQ)$. This is a morphism, and in fact is an embedding of $\P^n(\bQ)$ into $\P^N(\bQ)$.  Next we describe a formula for the height under a $d$-uple embedding. 

\begin{lem} 
Let $\phi_d: \P^n(\bQ) \to \P^N(\bQ)$ be the $d$-uple embedding.  Then   for all $ P \in \P^n(\overline \Q)$ we have
\[H(\phi_d(P))=  H(P)^d.\]
\end{lem}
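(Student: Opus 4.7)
The plan is to reduce the statement to a pointwise (per absolute value) identity and then take the product over all places, mirroring the proof of Lemma~\ref{exa_segre}. Fix a number field $K$ with $P = [x_0,\dots,x_n] \in \P^n(K)$, and write $\phi_d(P) = [M_0(x),\dots,M_N(x)]$. For a fixed absolute value $v \in M_K$, I claim that
\[
\max_{0 \leq l \leq N}\left\{|M_l(x)|_v\right\} = \left(\max_{0 \leq i \leq n}\left\{|x_i|_v\right\}\right)^d.
\]
The inequality $\leq$ is immediate from the multiplicativity of $|\cdot|_v$: every degree-$d$ monomial $M_l(x) = x_0^{e_0}\cdots x_n^{e_n}$ with $\sum e_i = d$ satisfies $|M_l(x)|_v \leq \left(\max_i |x_i|_v\right)^d$. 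For the reverse inequality, note that the monomials $x_0^d, x_1^d, \dots, x_n^d$ all appear among the $M_l$'s, so if $i^\ast$ achieves the maximum of $|x_i|_v$, then $|x_{i^\ast}^d|_v = |x_{i^\ast}|_v^d$ is one of the values over which we maximize.

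Next I would raise this identity to the power $n_v$ and take the product over all $v \in M_K$:
\[
H_K(\phi_d(P)) = \prod_{v \in M_K}\max_{0 \leq l \leq N}\left\{|M_l(x)|_v^{n_v}\right\} = \prod_{v \in M_K}\max_{0 \leq i \leq n}\left\{|x_i|_v^{n_v}\right\}^d = H_K(P)^d.
\]
Finally, taking the $[K:\Q]$-th root of both sides converts $H_K$ to the absolute height $H$, yielding $H(\phi_d(P)) = H(P)^d$, as desired.

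No serious obstacle is expected. The only subtlety is ensuring that the monomials $x_i^d$ are indeed among the enumerated $M_l$'s (which is clear, since the $M_l$ comprise \emph{all} degree-$d$ monomials in $x_0,\dots,x_n$), and that $|\phi_d(P)|$ computed over $K$ is compatible with the field of definition of $\phi_d(P)$, which follows because all $M_l(x) \in K$ whenever the $x_i \in K$.
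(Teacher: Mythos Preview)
Your proposal is correct and follows essentially the same approach as the paper: both arguments establish the per-place identity $\max_l |M_l(x)|_v = \bigl(\max_i |x_i|_v\bigr)^d$ by bounding each monomial above and then invoking the presence of the pure powers $x_i^d$ among the $M_l$, and then take the product over $M_K$ and the $[K:\Q]$-th root. Your write-up is slightly more explicit about the two inequalities, but there is no substantive difference.
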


\proof
Let $P$, and $\phi_d(P)=[ M_0(x), \dots, M_N(x)]$ be as above. By definition $M_i(x)$ are all monomials of degree $d$ in $n+1$ variables. It is clear that
\[|M_i(x)|_v \leq \max_i\left\{\frac{}{}|x_i|_v^d\right\}\]
and since $x_0^d, \dots, x_n^d$ appear in the list we have
\[\max_{0\leq j\leq N}\left\{ \frac{}{}|M_j(x)|_v \right\}= \max_{0\leq i \leq n}\left\{\frac{}{}|x_i|_v^d\right\}\]
Let $K$ be a number field such that $P\in \P^n(K)$, and $\phi_d(P)\in \P^m(K)$. Then, 
\[
\begin{split}
H_K(\phi_d(P))&= \prod_{v \in M_K}\max_{0\leq j\leq N}\left\{\frac{}{}|M_j(x)|_v^{n_v}\right\}= \prod_{v \in M_K}  \max_{0 \leq i \leq n}\left\{\frac{}{}|x_i|_v^{d \cdot n_v} \right\} \\
&= \left(\prod_{v \in M_K}  \max_{0 \leq i \leq n}\left\{\frac{}{}|x_i|_v^{ n_v} \right\}   \right)^d=H_K(P)^d
\end{split}
\]
Taking $[K:\Q]$-th root of both sides we obtain the desired result. 
\qed

For $P=[x_0, \dots, x_n]$ and $m \geq 1$, let $P^{(m)}$ be the point whose projective coordinates are all the monomials of degree $m$ in the $x_i$, and $P^m = [x_0^m, \dots, x_n^m]$.
Let $K$ be a number field such that $P^m \in \P^n(K)$. Then,
\[
\begin{split}
H_K(P^m) &= \prod_{v \in M_K} \max  \left\{\frac{}{} |x_0^m|_v^{n_v}, |x_1^m|_v^{n_v},  \dots, |x_n^m|_v^{n_v} \right\}= \prod_{v \in M_K} \max_i\left\{\frac{}{}|x_i^m|_v^{n_v} \right \}\\
&= \prod_{v \in M_K} \max_i\left\{\frac{}{}|x_i|_v^{n_v} \right \}^m =H_K(P)^m
\end{split}
\]
Then,   $H(P^{(m)})= H(P^m)=H(P)^m$. 


\section{Heights and change of coordinates on $\P^n$}
In the next few paragraphs we will consider what happens to the height of a point after a transformation $\phi$.  
Let 
\[
\begin{split}
 \phi : \quad \qquad \P^n (K)  & \to \P^r (K) \\
  [x_0, \dots , x_n] & \to [ \phi_o, \dots , \phi_r] \\
  \end{split}
\] 
be a rational map such that $\phi_i$ are rational functions of degree $m$. Define \textbf{the height of the map $\phi$ }, denoted by $H(\phi)$,  to be the height of a point $P$ in the projective space, where $P$ is the sequence  of  coefficients of all the $\phi_i$'s.  

Denote by $\mathcal Z$ be the set of common zeroes for all $\phi_i$'s. Then $\phi$ is defined on $\P^n (\bQ) \setminus \mathcal Z$.  We have the following: 
\begin{lem}[Formula for changing coordinates]\label{ch-coord}  
The following are true:

i)  Let $\phi$ be as above, and $\phi_i$  homogenous polynomials of degree $m$.  Then for each point $P=[x_0, \dots , x_n] \in \P^n (\bQ) \setminus \mathcal Z$ we have 
\[H(\phi \left( P  \right) ) \leq ||N||_\infty \, H(\phi) \, H(P )^m\]
where  $N$ is the maximum number of monomials appearing in any one of the $\phi_i$, and 
\[||N||_\infty = \prod_{v \in M_K^\infty}|N|_v^{n_v}\]

ii) Let $X$ be a closed subvariety of $\P^n (\bQ)$ with the property that $X\cap \mathcal Z= \emptyset$. Thus $\phi$ defines a morphism $X \to \P^r (\bQ)$. Then for every 
$P=[x_0, \dots , x_n]  \in X$ we have 
\[ H ( \phi ( P)) = c_0 \cdot H(P)^m .\]
for some constant $c_0$. 
\end{lem}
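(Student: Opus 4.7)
The plan is to prove part (i) by a direct triangle-inequality estimate at each place of $K$ and then assemble, and to deduce part (ii) by combining (i) with a reverse inequality obtained from the projective Nullstellensatz.

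For (i), I expand each homogeneous polynomial as $\phi_i(x) = \sum_{|I|=m} a_{i,I}\, x^I$, where the sum contains at most $N$ non-zero monomials. For every $v \in M_K$ the triangle inequality (respectively the ultrametric inequality when $v$ is non-Archimedean) yields
\[
|\phi_i(P)|_v \;\leq\; \epsilon_v(N) \cdot \max_I |a_{i,I}|_v \cdot \max_j |x_j|_v^{m},
\]
with $\epsilon_v(N) = N$ for Archimedean $v$ and $\epsilon_v(N) = 1$ otherwise. Taking the maximum over $i$, raising to the $n_v$-th power, and multiplying over all $v \in M_K$, the non-Archimedean factors collapse to $1$ while the Archimedean ones collect into $\|N\|_\infty$; the coefficient contribution assembles into $H_K(\phi)$ and the $\max_j |x_j|_v^m$ contribution into $H_K(P)^m$. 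Taking $[K:\Q]$-th roots delivers the stated bound.

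For (ii), the upper bound is immediate from (i). For the lower bound, the hypothesis $X \cap \mathcal Z = \emptyset$ together with the projective Nullstellensatz produces an integer $e \geq m$, homogeneous polynomials $g_{ij}$ of degree $e-m$, and polynomials $h_i$ lying in the homogeneous ideal of $X$, all defined over a fixed finite extension of $K$, such that
\[
x_i^{e} \;=\; \sum_{j=0}^{r} g_{ij}(x)\, \phi_j(x) \;+\; h_i(x), \qquad 0 \leq i \leq n.
\]
Evaluating at any $P \in X$ annihilates $h_i(P)$, and applying the local estimate from (i) to this identity yields
\[
\max_i |x_i|_v^{e} \;\leq\; C_v \cdot \max_i |x_i|_v^{e-m} \cdot \max_j |\phi_j(P)|_v,
\]
with $C_v = 1$ for all but finitely many places $v$. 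Dividing by $\max_i |x_i|_v^{e-m}$, which is non-zero since $P \in \P^n$, and multiplying over $v$ gives a reverse inequality $H(P)^m \leq c_1 \cdot H(\phi(P))$. Combined with (i), this produces the two-sided estimate $H(\phi(P)) \asymp H(P)^m$, which is the meaning of the equation in the statement.

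The principal obstacle is step (ii): invoking the Nullstellensatz to produce polynomials $g_{ij}$ and $h_i$ whose coefficients live in a single number field, so that the resulting constant $c_0$ depends only on $\phi$ and $X$ and not on the point $P$, requires some care. Once the identity is in hand, the local estimation runs in parallel with (i) and is essentially routine.
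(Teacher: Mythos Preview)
Your proposal is correct and follows essentially the same route as the paper: a direct local triangle-inequality estimate for part (i), and for part (ii) the projective Nullstellensatz identity $x_i^{e} = \sum_j g_{ij}\,\phi_j + h_i$ (with $h_i$ in the ideal of $X$), evaluated at $P\in X$ to obtain the reverse inequality. The paper's argument is identical in structure, and your reading of the conclusion of (ii) as a two-sided estimate $H(\phi(P)) \asymp H(P)^m$ (rather than a literal equality) matches what the paper actually proves.
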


\proof Fix a field of definition $K$ for $\phi$, so $\phi_0, \dots, \phi_r  \in K[X_0, \dots, X_n]$.   We can write $\phi_i$'s as follows 
\[\phi_i(X)= \sum_{\substack{j=(j_0, \dots, j_n)\in I\\j_0+\dots+j_n=m}}a_{i_j}X^j \quad \text{ for all $0 \leq i \leq r$} \]
where $X= X_0 X_1 \cdots  X_n$ and $X^j=X_0^{j_0}\cdot X_1^{j_1} \cdots X_n^{j_n}$. For some  $P=[x_0, \dots, x_n]$,  we want to estimate $H(\phi(P))$ where $\phi(P) =(\phi_0(P), \dots, \phi_r(P))$. 
\[
\begin{split}
H_K(\phi(P)) &=\prod_{v \in M_K} \max \left\{ \frac{}{} |\phi_0(P)|_v^{n_v}, \dots, |\phi_r(P)|_v^{n_v}\right\}= \prod_{v \in M_K} \max_{0\leq i \leq r} \left\{\frac{}{} |\phi_i(P)|_v^{n_v}\right\} \\
&= \prod_{v \in M_K} \max_{0\leq i \leq r} \left\{ \left|\sum_{\substack{j=(j_0, \dots, j_n)\in I\\j_0+\dots+j_n=m}}a_{i_j}x_0^{j_0}\cdot x_1^{j_1} \cdots x_n^{j_n}\right|_v^{n_v}\right\}\\
& \leq \prod_{v \in M_K} N_v^{n_v} \cdot \max_{\substack{i, j_l\\0\leq l \leq n}} \left \{\left|\frac{}{}a_{i_{j_l}} x_0^{j_0}\cdot x_1^{j_1} \cdots x_n^{j_n}\right|_v^{n_v} \right\}\\
& \leq \prod_{v \in M_K} N_v^{n_v} \cdot \max_{\substack{i, j_l\\ 0 \leq l \leq n}} \left \{|a_{i_{j_l}} |_v^{n_v} \right\} \cdot \max_{ 0 \leq l \leq n} \left \{| \frac{}{}x_l  |_v^{n_v} \right\}^m\\
&=   ||N||_{\infty} \cdot  H_K(\phi) \cdot \,  H_K(P)^m
\end{split}
\]
where $N$  is the maximum  number of monomials appearing in any one of the $\phi_i$.  Taking $[K:\Q]$-th root of both sides we obtain the desired result. 

ii) In part (i) we proved  that 
\[H(\phi(P)) \leq c_1 \cdot H(P)^m\]
where $c_1=  ||N||_\infty \cdot H(\phi)$, and it depends on $\phi$ but does not depend on the point $P \in \P^n(\bQ)$.  Now we want to show that for a point $P=[x_0, \dots, x_n] \in X(K)$ and a morphism $\phi=(\phi_0, \dots, \phi_r)$ on $X$ the following holds
\[H(\phi(P)) \geq  c_2 \cdot H(P)^m\]
Let $f_1, \dots, f_l$ be homogenous polynomials generating the ideal of $X$. Then, $f_1, \dots, f_l$, $\phi_0, \dots, \phi_r$ have no common zeros in $\P^n$. Let $ \mathfrak J =\< f_1, \dots, f_l, \phi_0, \dots, \phi_r \rangle$ and $  \mathfrak  I= \< X_0, \dots, X_n \rangle$. From Nullstellensatz theorem we have that $\mathfrak J$ has a radical equal to $ \mathfrak I$. Hence, for some polynomials  $p_{i, j}, q_{i, j}$ and an exponent $t \geq m$ the following is true
\[p_{0,j} \phi_0 + \cdots + p_{r, j}\phi_r+ q_{1,j}f_1+ \cdots +q_{l,j}f_l =X_j^t \qquad \text{ for } 0\leq j \leq n   \]
Note that, since $\phi_i$'s have degree $m$ then $p_{i, j}$'s have degree $t-m$. Extending $K$ if necessary we can assume that $p_{i, j}$'s, and $q_{i, j}$'s have coefficients in $K$. Since $P \in X(K)$, then   $f_i(P)=0$, for all $0 \leq i \leq l$. Evaluating the above at the point $P$ we have
\[p_{0,j}(P) \phi_0(P) + \cdots + p_{r, j}(P)\phi_r(P)= x_j^t, \qquad 0 \leq j \leq n\]
Hence,
\[
\begin{split}
|P|_v^t&= \max_j \left\{ \frac{}{}|x_j|_v^t\right\}= \max_j \left\{ \frac{}{}|p_{0,j}(P) \phi_0(P) + \cdots + p_{r, j}(P)\phi_r(P)|_v \right\}\\
&\leq |r+1|_v \left(\max_{i,j} \left\{ \frac{}{}|p_{i,j}(P)|_v \right\} \right)\left(\max_i \left\{ \frac{}{}|\phi_i(P)|_x\right\}\right)\\
& \leq |r+1|_v \left(\left| \binom{t-m+n}{n}\right|_v |P|_v^{t-m} \max_{i,j} \left\{ \frac{}{}|p_{i,j}|_v \right\}  \right) \left(\max_i \left\{ \frac{}{}|\phi_i(P)|_x\right\}\right)\\
\end{split}
\]
Denoting by $c_2$ the following
\[c_2 = |r+1|_v \cdot \left| \binom{t-m+n}{n}\right|_v \cdot \max_{i,j} \left\{ \frac{}{}|p_{i,j}|_v \right\}\]
and multiplying the above over all $v \in M_K$ and then taking $n_v/ [K:\Q]$-th root we obtain
\[H(P)^t \leq c_2 \cdot H(P)^{t-m}H(\phi(P)).\]
This completes the proof.
\qed

\begin{rem} If the change of coordinates is done by an automorphism of $\P^n(K)$, say $M \in PGL_{n+1}(K)$, then
\[H( P^M )  \leq (n+1) \cdot H(M)  \cdot H(P), \]
where $H (M)$ is 
\[ H(M) =  \max \{ a_{i, j} \},\]
for $1 \leq i \leq n+1$ and $1 \leq j \leq n+1$. 
\end{rem}

\newpage
\noindent \textbf{Part 2: Heights of polynomials} \\

In this lecture  we   define the height of a polynomial.    This is interesting to us since in the next section we will define the height of algebraic curves in terms of the height of a polynomial.

\section{Heights of polynomials}\label{heights_pol}

Throughout this paper a non-homogenous polynomial with $n$ variables will be denoted as follows
\[f(x_1, \dots, x_n) = \sum_{\substack{i=(i_1, \dots, i_n) \in I} } a_ix_1^{i_1} \cdots x_n^{i_n}\]
where all $a_i \in K$,  $I \subset \Z^{\geq 0}$, and $I$ is finite. Let $\deg f$ denote the total degree of $f$. We will use lexicographic ordering to order the terms in a given polynomial, and $x_1 > x_2 > \dots >x_n$.

The \textbf{(affine) multiplicative height of $f$} is defined as follows 
\[H^{\mathbb A}_K(f)= \prod_{v \in M_K} \max\left\{\frac{}{}1, |f|_v^{n_v}\right\}\]
where 
\[|f|_v := \max_j\left\{\frac{}{} |a_j|_v \right\}\]
is called the \textbf{Gauss norm} for any absolute value $v$.  The \textbf{(affine)  logarithmic height of $f$} is defined to be
\[h^{\mathbb A}_K(f) = h_K([1, \dots, a_j, \dots]_{j \in I}).\]
Hence, the affine height of a polynomial is defined to be the height of its coefficients taken as affine coordinates.  While, the \textbf{(projective) multiplicative height of a polynomial} is the height of its coefficients taken as coordinates in the projective space. Thus,
\[H_K(f)= \prod_{v \in M_K}  |f|_v^{n_v}\]
and the \textbf{(projective) logarithmic  height} is 
\[h_K(f)=   \sum_{v\in M_K} n_v \log|f|_v\]
The   \textbf{(projective) absolute multiplicative  height} is defined as follows
\[
\begin{split}
 H: \P^n(\Q) & \to [1, \infty)\\
 H(f)&=H_K(f)^{1/[K:\Q]},
 \end{split}
\]
and in the same way $h(f)$, $H^{\mathbb A}(f), h^{\mathbb A}(f)$.

\begin{exa} Let 
\[f(x, y)= 3x^3+3x^2+ 12xy+ 6y^2+3y+6.\]
Since $f(x, y)$ has integer coefficients the non-Archimedean absolute values give no contribution to the height, 
the (affine) height is
\[
H^{\mathbb A}(3x^3+3x^2+12xy+6y^2+3y+6) =H^{\mathbb A}([1,3,3,12,6,3,6]) =  12. 
\]	
The (projective) height is
\[
\begin{split}
H(3x^3+3x^2+12xy+6y^2+3y+6)&=H([3,3,12,6,3,6])=H(1,1,4,2,1,2)= 4.
\end{split}
\]

\end{exa}

\begin{thm}\label{pol_finite} 
Let be given  $F(x, y ) \in K [x, y]$. Then,  there are only finitely many polynomials $G(x,y)\in K [x, y]$ such that $H_K(G) \leq H_K(F)$.  
\end{thm}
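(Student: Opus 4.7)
The plan is to reduce the statement to Northcott's theorem (Theorem~\ref{thm_finite}) via the obvious identification of a polynomial with its coefficient vector. The key observation is built into the definitions of Section~\ref{heights_pol}: if $G(x,y) = \sum_{(i,j) \in I} a_{ij}\,x^i y^j$ then $|G|_v = \max_{(i,j)} |a_{ij}|_v$, and hence
\[
H_K(G) \;=\; \prod_{v \in M_K} |G|_v^{n_v} \;=\; H_K\bigl([\,a_{ij}\,]_{(i,j)\in I}\bigr),
\]
where the right-hand side is the projective height of the coefficient vector viewed as a point in some $\P^N(K)$. In particular $H_K(\lambda G) = H_K(G)$ for any $\lambda \in K^*$ by the product formula, so the finiteness claim must be understood modulo this scalar equivalence (or, in the case $K = \Q$, after normalizing to primitive integer coefficients as in the example preceding the theorem).

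First I would fix an ambient finite-dimensional space by bounding the degree. An implicit hypothesis (without which the conclusion would fail, since $H_K(x^n) = 1$ for every $n$) is that $\deg G \leq d := \deg F$. Under this bound, the coefficient vector of any candidate $G$ lies in the space spanned by the monomials of total degree at most $d$ in two variables, giving a uniform embedding into $\P^N(K)$ with $N = \binom{d+2}{2} - 1$ independent of $G$.

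Second I would invoke Theorem~\ref{thm_finite} applied to $\P^N(K)$ with the constant $c_0 = H_K(F)$: the set $\{P \in \P^N(K) : H_K(P) \leq H_K(F)\}$ is finite. Transporting this back along the polynomial-to-coefficient-vector identification yields finitely many polynomials $G$ up to $K^*$-scaling, which is exactly the desired conclusion.

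The only real subtlety, and thus the main obstacle, is the bookkeeping at the start: one must pin down the degree bound (implicit in the statement) and acknowledge the scalar-equivalence built into the projective height. Once the coefficient-vector identification is set up in a single $\P^N(K)$, the result is an immediate corollary of Northcott.
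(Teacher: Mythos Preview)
Your proposal is correct and follows essentially the same route as the paper: both identify a polynomial with its coefficient vector in a projective space of fixed dimension and then invoke Northcott's theorem (Theorem~\ref{thm_finite}). Your write-up is in fact more careful than the paper's about making explicit the implicit degree bound $\deg G \leq \deg F$ and the scalar/content normalization, but the underlying argument is the same.
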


\proof
Let 
\[F(x, y)=\sum_{\substack{i=(i_1, i_2)\in I \\ i= i_1+i_2} } a_ix^{i_1}y^{i_2}\]
be a  polynomial with coefficients in $K$ and  fix an ordering $x > y$.  Let  $H_K(F)=c$. By  definition  
\[H_K(F) =  \prod_{v \in M_K}  |f|_v^{n_v}= \prod_{v \in M_K} \max_i\left\{\frac{}{} |a_i|_v^{n_v}\right\} = H_K[a_0   \dots, a_i, \dots ]_{i\in I}.\]
But, $P=[a_0   \dots, a_i, \dots ]_{i\in I}$ is a point in $\P^{s}$ where $s$ is the number of monomials of degree $d$ in 2 variables. Hence,  $s=\left( 
\begin{array}{c}
d+1 \\ 
d
\end{array}
\right). $

From Theorem~\ref{thm_finite} we have  that  for any constant  $c$ the set 
\[\{P \in \P^s(K): H_K(P) \leq c \}\]
is finite. Hence there are finitely many polynomials $G(x, y)$ with content 1 corresponding to points $P$ with height   
$H_K(G) \leq c=H_K(F)$.
\qed

Now we will study the height of the product of polynomials. At first we will deal with the case when the polynomials are in different variables, and then consider the case when they are polynomials in the same variable.

\begin{prop}Let $f(x_0, \dots, x_n)$ and $g(y_0, \dots, y_n)$ be polynomials in different variables. Then, the projective height has the following property
\[H(f \cdot g)= H(f) \cdot H(g)\]
\end{prop}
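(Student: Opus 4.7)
The plan is to mimic the computation already carried out in Lemma \ref{exa_segre} for the Segre embedding: the key observation is that when the variables of $f$ and $g$ are disjoint, no cancellation is possible in the product, so the set of coefficients of $fg$ is exactly the set of products $\{a_i b_j\}$ of coefficients of $f$ and $g$. The identity $H(fg) = H(f) H(g)$ then follows essentially from multiplicativity of each absolute value.

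First I would fix a number field $K$ containing all coefficients of $f$ and $g$ and write
\[
f(x_0,\dots,x_n) = \sum_{i \in I} a_i \, x^i, \qquad g(y_0,\dots,y_n) = \sum_{j \in J} b_j \, y^j.
\]
Since the variables $x$ and $y$ are disjoint, distinct pairs $(i,j)$ produce distinct monomials $x^i y^j$, so
\[
(fg)(x,y) = \sum_{(i,j) \in I \times J} (a_i b_j) \, x^i y^j,
\]
with no collapsing of terms. Hence the coefficients of $fg$ are precisely the products $a_i b_j$.

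Next I would compute the Gauss norm at each place $v \in M_K$:
\[
|fg|_v = \max_{(i,j)} |a_i b_j|_v = \max_{(i,j)} |a_i|_v \cdot |b_j|_v = \Bigl(\max_i |a_i|_v\Bigr)\Bigl(\max_j |b_j|_v\Bigr) = |f|_v \cdot |g|_v.
\]
Here I use that $|\cdot|_v$ is multiplicative, and that the maximum of a product of nonnegative quantities over a rectangular index set factors as a product of maxima. Raising to the $n_v$-th power and taking the product over $v \in M_K$ gives
\[
H_K(fg) = \prod_{v \in M_K} |fg|_v^{n_v} = \prod_{v \in M_K} |f|_v^{n_v} \cdot \prod_{v \in M_K} |g|_v^{n_v} = H_K(f) \cdot H_K(g).
\]
Finally, taking the $[K:\Q]$-th root of both sides yields $H(fg) = H(f)\cdot H(g)$, as desired.

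There is no real obstacle here: the proof reduces to the disjointness of variables (which prevents coefficient cancellation) together with the multiplicativity of the Gauss norm. The argument also shows why the hypothesis that $f$ and $g$ use different variables is essential — without it, cross terms $a_i b_j + a_{i'} b_{j'}$ could appear and the non-Archimedean estimate $|fg|_v = |f|_v |g|_v$ would fail in general (the Archimedean case then requires Gelfand's inequality, to be treated separately).
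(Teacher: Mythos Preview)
Your proof is correct and follows essentially the same approach as the paper. The paper observes that the coefficient vector of $fg$ is exactly the Segre image $S_{s,l}(P,Q)$ of the coefficient vectors $P$ and $Q$ of $f$ and $g$, and then invokes Lemma~\ref{exa_segre} directly; you simply unpack that lemma inline by computing $|fg|_v = |f|_v|g|_v$ at each place, which is precisely the content of the Segre height identity.
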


\proof  The height of a polynomial is equal to the height of its coefficients in appropriate projective space. Let $H(f) = H(P)$, where $P \in  \P^{s}$, and $H(g)=H(Q)$ for $Q \in \P^{l}$, where $s,l$ is  the number of monomials of $f, g$ respectively.  Then, $H(f \cdot g)= H(S_{s,l}(P, Q))= H(P) \cdot H(Q)$ from  Lemma~\ref{exa_segre}. Therefore,  $H(f \cdot g) = H(f) \cdot H(g)$.
\qed

Before considering the height of polynomials in the same variables, we will consider $|f \cdot g|_v$. The following lemma is true for the product of a finite number of polynomials.

\begin{lem}[Gauss's lemma] Let $K$ be a number field and $f, g \in K[x_1, \dots, x_n]$.  If $v$ is not Archimedean, then $|f g|_v= |f|_v  |g|_v$. 
\end{lem}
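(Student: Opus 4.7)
The plan is to establish the two inequalities $|fg|_v \leq |f|_v \cdot |g|_v$ and $|fg|_v \geq |f|_v \cdot |g|_v$ separately. Write $f = \sum_i a_i x^i$ and $g = \sum_j b_j x^j$, with $i,j$ ranging over multi-indices in $\Z_{\geq 0}^n$ and $x^i = x_1^{i_1}\cdots x_n^{i_n}$, so the coefficient of $x^k$ in the product is $c_k = \sum_{i+j=k} a_i b_j$.

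For the upper bound, the ultrametric inequality, available because $v$ is non-Archimedean, gives
\[ |c_k|_v \leq \max_{i+j=k} |a_i|_v \cdot |b_j|_v \leq |f|_v \cdot |g|_v \]
for every $k$, and taking the maximum over $k$ yields $|fg|_v \leq |f|_v \cdot |g|_v$. This direction is immediate.

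The content of the lemma lies in the reverse inequality, which I would prove by pinpointing a single coefficient of $fg$ whose $v$-absolute value is exactly $|f|_v\cdot|g|_v$. Fix the lexicographic ordering on multi-indices induced by $x_1 > \cdots > x_n$. Let $i_0$ be the lex-largest multi-index with $|a_{i_0}|_v = |f|_v$, and $j_0$ the lex-largest with $|b_{j_0}|_v = |g|_v$. Then examine
\[ c_{i_0+j_0} = a_{i_0} b_{j_0} + \sum_{\substack{i+j=i_0+j_0 \\ (i,j) \neq (i_0,j_0)}} a_i b_j \]
and show that every term in the sum has $v$-absolute value strictly less than $|a_{i_0} b_{j_0}|_v$. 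Once this is in hand, the strict form of the ultrametric inequality forces $|c_{i_0+j_0}|_v = |a_{i_0} b_{j_0}|_v = |f|_v\cdot|g|_v$, and therefore $|fg|_v \geq |f|_v \cdot |g|_v$.

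The one combinatorial point that must be handled carefully, and the main obstacle, is to verify that for any pair $(i,j) \neq (i_0,j_0)$ with $i+j = i_0+j_0$, either $i >_{\mathrm{lex}} i_0$ or $j >_{\mathrm{lex}} j_0$. This is where translation-invariance of the lex order enters: if $i <_{\mathrm{lex}} i_0$, then cancelling from $i+j = i_0+j_0$ forces $j >_{\mathrm{lex}} j_0$, and the reverse implication is symmetric. In the first case, maximality of $j_0$ in the definition gives $|b_j|_v < |g|_v$; in the second, maximality of $i_0$ gives $|a_i|_v < |f|_v$; either way $|a_i b_j|_v < |a_{i_0} b_{j_0}|_v$. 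No further number-theoretic input is needed, so once the combinatorial fact is established the proof is complete.
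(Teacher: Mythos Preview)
Your proof is correct and follows the standard argument for Gauss's lemma in several variables. The paper itself does not supply a proof of this statement: it simply cites \cite[pg.~22]{bombieri}. So there is no comparison of approaches to make here; your argument is self-contained and would serve as a complete proof where the paper gives only a reference. One trivial edge case you might mention explicitly is $f=0$ or $g=0$, where the identity is immediate; your choice of $i_0,j_0$ tacitly assumes both polynomials are nonzero.
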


The proof can be found in \cite[pg. 22]{bombieri}.

Gauss's lemma applies to all non-Archimedean absolute values but the Archimedean case is more complicated. An analogous Archimedean estimate is given by the following lemma. Gauss's lemma and the following are used to give an estimate of $H(f_1 f_2 \cdots f_r)$ in terms of $H(f_i)$ for $ 1\leq i \leq r$ and  $f_1, f_2, \dots , f_r \in  K[x_1, \dots, x_n]$.

\begin{lem}\label{nonarch}
Let $ f_1, f_2, \dots, f_r \in   \C[x_1, \dots, x_n]$. Denote by $f= f_1\cdots f_r$   and $d_i =\deg (f, x_i)$. Then, the following is true
\begin{equation}\label{eq1}
\prod_{i=1}^r|f_i|_v \leq e^{(d_1+\dots+d_n)}|f|_v.
\end{equation}
\end{lem}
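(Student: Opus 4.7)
The strategy is to split on whether $v$ is Archimedean. For any non-Archimedean $v$, Gauss's lemma (the preceding lemma) immediately gives $\prod_i |f_i|_v = |f|_v$, so the inequality holds trivially since $e^{d_1+\cdots+d_n}\geq 1$. For Archimedean $v$, I fix a complex embedding of the coefficient field and identify $|\cdot|_v$ with the usual absolute value on $\mathbb{C}$.

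The main tool is the multi-variable Mahler measure
\begin{equation*}
M(g) := \exp\!\left(\int_{[0,1]^n}\log\bigl|g(e^{2\pi i\theta_1},\ldots,e^{2\pi i\theta_n})\bigr|\, d\theta_1\cdots d\theta_n\right),
\end{equation*}
defined for $g\in\mathbb{C}[x_1,\ldots,x_n]$; multiplicativity $M(f_1\cdots f_r)=\prod_i M(f_i)$ follows at once from $\log|gh|=\log|g|+\log|h|$. The plan is to bracket $|g|_v = \max_\alpha|a_\alpha|$ between two multiples of $M(g)$: (i) a coefficient bound $|g|_v\leq C_1(g)\,M(g)$ with $C_1(g)\leq \prod_j\binom{d_j(g)}{\lfloor d_j(g)/2\rfloor}$, obtained by iterating the univariate Mahler estimate $|a_k|\leq\binom{d}{k}M(g)$ once in each variable (which itself follows from Mahler's factorization $M(g)=|a_d|\prod_j\max(1,|\alpha_j|)$ together with elementary symmetric-function bounds); and (ii) an opposite bound $M(g)\leq C_2(g)\,|g|_v$ with $C_2(g)\leq \prod_j(d_j(g)+1)$, coming from Jensen's inequality applied to the torus integral together with $\|g\|_{L^\infty(T^n)}\leq N(g)\,|g|_v$, where $N(g)\leq \prod_j(d_j(g)+1)$ is the number of monomials. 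Applying (i) to each $f_i$, using multiplicativity of $M$, and then applying (ii) to $f$, I get
\begin{equation*}
\prod_{i=1}^r |f_i|_v \;\leq\; \Bigl(\prod_{i=1}^r C_1(f_i)\Bigr)\,M(f) \;\leq\; \Bigl(\prod_{i=1}^r C_1(f_i)\Bigr)\,C_2(f)\,|f|_v.
\end{equation*}

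The main obstacle is to verify that the combined constant satisfies $\prod_i C_1(f_i)\cdot C_2(f)\leq e^{d_1+\cdots+d_n}$, which is the only place where $e$ rather than $2$ is used. For each variable $x_j$, the partial degrees $d_{j,i}$ of the $f_i$ sum to $d_j$, and by a Vandermonde-type convexity estimate $\prod_i\binom{d_{j,i}}{\lfloor d_{j,i}/2\rfloor}\leq \binom{d_j}{\lfloor d_j/2\rfloor}$, so the required bound reduces to the one-variable inequality $\binom{d}{\lfloor d/2\rfloor}(d+1)\leq e^d$. This is checked directly for small $d$ and asymptotically via Stirling's formula $\binom{d}{\lfloor d/2\rfloor}\sim 2^d/\sqrt{\pi d/2}$, with the gap between $2$ and $e$ absorbing both the polynomial factor $d+1$ and the $1/\sqrt{d}$ correction. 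Note that the clean one-variable inequality $M(g)\leq|g|_v$ genuinely fails in several variables (for example $M(x+y+1)>1=|x+y+1|_v$), which is why the multiplicative slack $C_2(f)$ cannot be dropped.
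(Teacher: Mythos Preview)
Your approach is exactly the one the paper invokes: the paper does not prove this lemma itself but refers to \cite[p.~232]{silv-book} and explicitly says the proof goes via the Mahler measure and its multiplicativity $M(fg)=M(f)M(g)$, which is precisely your sandwich $|g|_v\le C_1(g)M(g)$, $M(g)\le C_2(g)|g|_v$ combined with $M(f)=\prod_i M(f_i)$. The argument is correct in outline and in its constants; the Vandermonde step $\prod_i\binom{d_{j,i}}{\lfloor d_{j,i}/2\rfloor}\le\binom{d_j}{\lfloor d_j/2\rfloor}$ follows from $\binom{m}{a}\binom{n}{b}\le\binom{m+n}{a+b}\le\binom{m+n}{\lfloor(m+n)/2\rfloor}$.

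Two small points worth tidying. First, the opening non-Archimedean case is superfluous: the lemma is stated over $\mathbb{C}$, so only the usual absolute value is in play. Second, the final elementary inequality $\binom{d}{\lfloor d/2\rfloor}(d+1)\le e^d$ is true for every $d\ge 0$, but ``small $d$ plus Stirling'' is not a proof; either give an explicit crossover, or use the sharper $L^2$ bound $M(g)\le\|g\|_2\le\prod_j\sqrt{d_j+1}\,|g|_v$ so that the requirement becomes $\binom{d}{\lfloor d/2\rfloor}\sqrt{d+1}\le e^d$, which is easier to verify uniformly. (As an aside, your closing remark that ``the clean one-variable inequality $M(g)\le|g|_v$'' fails in several variables is slightly off: it already fails in one variable, e.g.\ $g=x^2-x-1$ has $M(g)=(1+\sqrt5)/2>1=|g|_v$; this does not affect your argument.)
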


The proof of this can be found in \cite[pg. 232]{silv-book} and uses the concept of Mahler measure which is defined as follows. 

Let  $f(x_1, \dots, x_n) \in \C[x_1, \dots, x_n]$  be a polynomial in $n$ variables.  The \textbf{Mahler measure} of this polynomial is defined as follows
\[M(f):= \exp \left( \int_{\mathbb T^n} \log \left|f(e^{i\theta_1}, \dots , e^{i\theta_n})\right| d\mu_1 \cdots d\mu_n \right)\]
where $\mathbb T$ is the unit circle $\{e^{i\theta}|0\leq \theta \leq 2\pi\}$ equipped with the standard measure $d\mu= (1/2\pi)d\theta$

One of the most important properties of the Mahler measure is the multiplicative property. 
\[M(fg)= M(f)M(g),\] 
see \cite[pg. 230]{silv-book} for proof.


The following is true for affine heights. 

\begin{lem}~\label{convers_gelfands}
Let $K$ be a number field and  $ f_1, \dots, f_r  \in K[x_1, \dots, x_n]$.  Denote with $\deg f_j$ the total degree of $f_j$. Then the following are true

i) The height of the product of $f_1, \cdots, f_r$ is bounded as follows
\[
\begin{split}
H^{\mathbb A}(f_1\, f_2 \cdots f_r) & \leq  \, \, N  \cdot  \prod_{j=1}^r\frac{}{}H^{\mathbb A}(f_j)  \\
\end{split}
\]

ii)The height of the sum of $f_1 + \cdots + f_r$ is bounded as 
\[H^{\mathbb A}(f_1 +f_2 + \dots + f_r) \leq \, \,   r \, \cdot  \prod_{j=1}^r H^{\mathbb A}(f_j).\]

iii)  Suppose that $f_1, \dots, f_r \in \O_K[x_1, \dots, x_n]$ have coefficients in the ring of integers $\O_K$ of $K$. Then
\[H^{\mathbb A}(f_1 +f_2 + \dots +f_r) \leq \, \, r   \cdot \max_j \left\{\frac{}{} H^{\mathbb A}(f_j)\right\}^{[K:\Q]}.\]
This estimate is useful when $K$ is fixed and $r$ is large. 
\end{lem}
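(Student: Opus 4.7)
The plan is to prove each of the three estimates by splitting $M_K$ into its Archimedean and non-Archimedean parts, bounding $|f|_v$ separately for each kind of place, and then multiplying back up. The non-Archimedean contribution will be essentially free, handled by Gauss's lemma (for products) or by the ultrametric inequality (for sums); the Archimedean places will contribute combinatorial factors that, after multiplying over all $v \in M_K^\infty$ and invoking $\sum_{v \in M_K^\infty} n_v = [K:\Q]$, yield the constants $N$ and $r$ on the right-hand side upon taking the final $[K:\Q]$-th root.

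For part (i), at non-Archimedean $v$ I would apply Gauss's lemma to get $|f_1 \cdots f_r|_v = \prod_j |f_j|_v$, from which $\max\{1,|f_1 \cdots f_r|_v^{n_v}\} \leq \prod_j \max\{1,|f_j|_v^{n_v}\}$. At Archimedean $v$, every coefficient of $f_1 \cdots f_r$ is the sum of at most $N$ products, each a product of one coefficient drawn from each $f_j$, where $N$ is a combinatorial constant depending only on the degrees and number of variables. The triangle inequality then gives $|f_1 \cdots f_r|_v \leq N \prod_j |f_j|_v$. Combining the two cases and multiplying over all $v$ yields $H^{\mathbb A}_K(f_1 \cdots f_r) \leq N^{[K:\Q]} \prod_j H^{\mathbb A}_K(f_j)$; extracting the $[K:\Q]$-th root gives the stated absolute-height inequality. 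Part (ii) is parallel and easier: at non-Archimedean $v$ the ultrametric inequality gives $|f_1+\cdots+f_r|_v \leq \max_j|f_j|_v$, and at Archimedean $v$ the triangle inequality gives an extra factor of $r^{n_v}$, so after multiplying and taking roots the final constant is $r$.

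For part (iii), the hypothesis $f_j \in \O_K[x_1,\ldots,x_n]$ forces $|f_j|_v \leq 1$ at every non-Archimedean $v$, so these places contribute nothing to $H^{\mathbb A}_K(\sum_j f_j)$. Only Archimedean places survive, and at each I would bound $\max\{1,|\sum_j f_j|_v^{n_v}\} \leq r^{n_v} \max_j H^{\mathbb A}_K(f_j)$ using the triangle inequality together with the trivial estimate $\max\{1,|f_j|_v^{n_v}\} \leq H^{\mathbb A}_K(f_j)$. Multiplying over $v \in M_K^\infty$ yields $H^{\mathbb A}_K(\sum f_j) \leq r^{[K:\Q]}\bigl(\max_j H^{\mathbb A}_K(f_j)\bigr)^{|M_K^\infty|}$, and since $|M_K^\infty| \leq [K:\Q]$ with $H^{\mathbb A}_K(f_j) \geq 1$, I may inflate the exponent on the max to $[K:\Q]$. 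Taking the $[K:\Q]$-th root and using $H^{\mathbb A}_K(f_j)=H^{\mathbb A}(f_j)^{[K:\Q]}$ gives $H^{\mathbb A}(\sum f_j) \leq r \cdot \bigl(\max_j H^{\mathbb A}(f_j)\bigr)^{[K:\Q]}$ as claimed.

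The main (modest) obstacle is identifying the right combinatorial constant $N$ in (i): one must verify it is chosen uniformly in $v$ (which is automatic, since it depends only on the shapes of the monomials, not on the absolute value) and that it matches the constant in the lemma statement. Invoking Mahler measure together with Lemma~\ref{nonarch} would yield a sharper $N$, but the elementary triangle-inequality route already delivers an $N$ of the required form.
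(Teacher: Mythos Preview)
Your proposal is correct and follows essentially the same route as the paper: split into Archimedean and non-Archimedean places, pick up the combinatorial factor $N_v$ or $r_v$ only at the Archimedean ones, multiply over $M_K$, and take the $[K:\Q]$-th root. The only cosmetic difference is that in part~(i) you invoke Gauss's lemma to get $|f_1\cdots f_r|_v=\prod_j|f_j|_v$ at non-Archimedean $v$, whereas the paper simply applies the ultrametric inequality to the coefficient sums (so $N_v=1$); both yield the same local bound, and your part~(iii) bookkeeping (bounding $\max\{1,|f_j|_v^{n_v}\}\le H_K^{\mathbb A}(f_j)$ first, then multiplying over the at most $[K:\Q]$ Archimedean places) is just a reordering of the paper's argument.
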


\proof

i) Let $i=(i_1, \dots, i_n)$ and  write $f_j$'s as follows
\[f_j =\sum_ia_{ji} x_1^{i_1} \cdots x_n^{i_n}\]
for all $j=1, \dots, r$. Then
\[
\begin{split}
f_1 f_2 \cdots f_r &= \left(\sum_ia_{1i} x_1^{i_1} \cdots x_n^{i_n}\right)  \cdot  \left( \sum_ia_{2i} x_1^{i_1} \cdots x_n^{i_n} \right)\cdots  \left(\sum_ia_{ri} x_1^{i_1} \cdots x_n^{i_n}\right)\\
&=   \sum_i \left( \sum_{i_1+\dots +i_r =i} a_{1i_1}\cdot a_{2i_2} \cdots a_{ri_r}  \right) x^i
\end{split}
\]
where   $x^i=x_1^{i_1} \cdots x_n^{i_n}$. Then, for every $v \in M_K$ the Gauss norm is
\[|f_1  f_2 \cdots f_r |_v =\max_i \left\{ \left| \sum_{i_1+\dots +i_r =i} a_{1i_1}\cdot a_{2i_2} \cdots a_{ri_r}  \right|_v\right\}.
\]
Let $N$ be an upper bound for the number of non-zero terms in the sums, and let 
\[N_v= \left \{ 
\begin{split}
& N  \, \, \, \text{ if $v$ is Archimedean }\\
&  1 \, \, \, \, \,  \text{ if $v$ is non-Archimedean } 
\end{split}
\right.
\]
Then,
\[
\begin{split}
|f_1  f_2 \cdots f_r |_v & \leq \max_i \left\{  \sum_{i_1+\dots +i_r =i} \left|a_{1i_1}\cdot a_{2i_2} \cdots a_{ri_r}  \right|_v\right\}\\
&\leq \max_i \left\{ N_v \cdot \max_{\substack{i_j\\i_1+\dots +i_r =i}} \left\{ \frac{}{}\left| a_{1i_1}\cdots a_{ri_r}  \right|_v\right\}\right\}\\
& \leq N_v \prod_{j=1}^r \max_{i_j}\left \{\frac{}{}1, |a_{ji_j}|_v\right\} \leq N_v \prod_{j=1}^r \max_{j}\left\{ \frac{}{}1, |f_j|_v \right\}.
\end{split}
\]
Raising to the $n_v$ power and taking the product over all valuations $v\in M_K$ we have the following
\[
\begin{split}
H^{\mathbb A}_K(f_1\cdots f_r ) &= \prod_{v \in M_K} \max \left\{\frac{}{} 1, |f_1\cdots f_r |_v^{n_v}\right\} \leq \prod_{v \in M_K} \left\{N_v \prod_{j=1}^r \max_j \left\{\frac{}{} 1, |f_j|_v\right\}\right\}^{n_v}\\
& \leq N^{[K:\Q]} \prod_{j=1}^r H_K(f_j), \qquad \left( \text{ since } \sum_{v \in M_K^\infty} n_v=[K:\Q]\right)
\end{split}
\]
Taking $[K:\Q]$-th root  we obtain the desired result.

ii)  Let $f_j$ be as above. Then,
\[f_1+ \dots + f_r = \sum_{(i_1, \dots, i_n)=i}(a_{1i}+\dots+a_{ri})x^i\] 
Thus, for every absolute value $v\in M_K$,
\[|f_1+ \dots + f_r|_v = \max_i \left\{ \frac{}{}|a_{1i}+\dots+a_{ri}|_v\right\}.\] 
Letting,
\[r_v= \left \{ 
\begin{split}
& r  \, \, \, \text{ if $v$ is Archimedean }\\
&  1 \, \, \,   \text{ if $v$ is non-Archimedean } 
\end{split}
\right.
\]
we have
\[
\begin{split}
 |f_1+ \dots + f_r|_v   &\leq r_v   \max_{j,i}\left\{\frac{}{}1, |a_{ji}|_v\right\} \quad \text{ (  for $j, i$  as above)}\\
& \leq r_v  \prod_{j=1}^r \max_i \left\{\frac{}{}1, |a_{ji}|_v\right\}.
\end{split}
\]
Raising to the $n_v/[K:\Q]$ power and taking the product over all valuations $v\in M_K$ we have the following
\[H^{\mathbb A}(f_1+ \dots + f_r) \leq \, \, r   \prod_{j=1}^r H^{\mathbb A}(f_j).\]
And we are done.

iii) We have that  $f_1, \dots, f_r $  have coefficients in the ring of integers $\O_K$ of $K$. Then, $f_1+ \dots + f_r$ will have integer coefficients as well. Hence, for any non-Archimedean absolute value $v$, and any $j$  we have that $|f_j|_v \leq 1$ and therefore the following is true
\[\max \left \{\frac{}{}1, |f_1+ \dots + f_r|_v \right \} = \max \left\{\frac{}{}1, |f_1|_v \right \} = \dots =  \max \left\{\frac{}{}1, |f_r|_v \right\}=1.\]
Hence the non-Archimedean absolute values do not contribute to $H_K(f_1+ \dots +f_r)$, and we have
\[ 
\begin{split}
H^{\mathbb A}_K(f_1+ \dots +f_r) & = \prod_{v \in M_k^\infty} \max \left\{\frac{}{}1, |f_1+ \dots + f_r|_v^{n_v}\right\}\\
&\leq  \prod_{v \in M_k^\infty}  r \cdot \max_{1\leq j \leq r} \left\{\frac{}{}1, |f_j|_v^{n_v}\right\}, \qquad \text{ from absolute value properties }\\
&\leq r^{[K:\Q]} \cdot \max_{1\leq j \leq r} \left\{ \max_{v\in M_K^\infty} \left \{\frac{}{}1, |f_j|_v^{n_v} \right\}^{[K:\Q]}\right\}, \qquad \text{ since $\#M_K^\infty \leq [K:\Q]$ }\\
& \leq r^{[K:\Q]} \cdot \max_{1\leq j \leq r} \left\{ H^{\mathbb A}_K(f_j)^{[K:\Q]}\right\}
\end{split}
\]
Taking $[K:\Q]$-th root of both sides we obtain the desired result.
\qed

The converse inequality for the inequality in part (i) is known as Gelfand's inequality. This inequality is true if we use projective polynomial heights.   

\begin{lem}[Gelfand's inequality]  Let $f_1, \dots, f_r \in \overline \Q[x_1, \dots, x_n]$ be polynomials, with degree $d_1, \dots, d_r$ respectively, such that $\deg (f_1 \cdots f_r, x_i) \leq d_i$ for each $1\leq i \leq r$. Then
\[\prod_{i=1}^r H(f_i) \leq e^{(d_i+ \cdots +d_n)} \cdot H(f_1 \cdots f_r) .\]
\end{lem}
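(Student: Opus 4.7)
The plan is to reduce Gelfand's inequality to a valuation-by-valuation comparison, using Gauss's lemma on non-Archimedean absolute values and Lemma~\ref{nonarch} on Archimedean ones, and then multiply across $M_K$. A decisive observation is that the \emph{projective} multiplicative height $H_K(f)=\prod_v |f|_v^{n_v}$ carries no $\max\{1,\cdot\}$ cutoff, which is precisely what makes this converse to Lemma~\ref{convers_gelfands}(i) possible and what lets the product estimate telescope cleanly.

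First I would fix a number field $K$ containing all coefficients of every $f_i$, view $f_1,\dots,f_r$ and $f:=f_1\cdots f_r$ in $K[x_1,\dots,x_n]$, and split $M_K=M_K^0\sqcup M_K^\infty$. For each non-Archimedean $v\in M_K^0$, iterating Gauss's lemma yields the equality $\prod_{i=1}^r |f_i|_v = |f|_v$. For each Archimedean $v\in M_K^\infty$, lifting $v$ to a complex embedding of $K$ places us in the setting of Lemma~\ref{nonarch}, whose conclusion (using the hypothesis $\deg(f,x_j)\le d_j$) reads
\[\prod_{i=1}^r |f_i|_v \;\le\; e^{d_1+\cdots+d_n}\,|f|_v.\]

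Next I would raise both local estimates to the $n_v$-th power and take the product over all $v\in M_K$. Since the non-Archimedean valuations contribute equality, only the Archimedean factor accumulates an exponential, and I obtain
\[\prod_{i=1}^r H_K(f_i)\;=\;\prod_{v\in M_K}\Bigl(\prod_{i=1}^r|f_i|_v\Bigr)^{n_v}\;\le\;e^{(d_1+\cdots+d_n)\sum_{v\in M_K^\infty}n_v}\,H_K(f).\]
By the degree formula, $\sum_{v\in M_K^\infty}n_v=[K:\Q]$, so the exponent simplifies to $(d_1+\cdots+d_n)[K:\Q]$. Finally, taking $[K:\Q]$-th roots produces the stated bound for the absolute height $H$.

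The main substance of the argument is already absorbed into Lemma~\ref{nonarch}, whose proof rests on the multiplicativity of the Mahler measure; once that lemma is granted, Gelfand's inequality is a bookkeeping consequence of the product formula. The only point that deserves care is the identification of each Archimedean place of $K$ with a complex embedding so that Lemma~\ref{nonarch} (stated over $\C$) applies directly to the $f_i$, and the observation that, unlike the affine heights of Lemma~\ref{convers_gelfands}, the projective height treats all valuations symmetrically without truncation.
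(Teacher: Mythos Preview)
Your proof is correct and follows essentially the same route as the paper: split $M_K$ into non-Archimedean places (where Gauss's lemma gives equality) and Archimedean places (where Lemma~\ref{nonarch} supplies the exponential factor), multiply over all $v$, invoke $\sum_{v\in M_K^\infty} n_v=[K:\Q]$, and take the $[K:\Q]$-th root. Your added remarks on why the projective height is needed and on identifying Archimedean places with complex embeddings are fine elaborations, but the skeleton of the argument is identical to the paper's.
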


\proof  From Lemma~\ref{nonarch} the following is true  
\begin{equation}\label{eq1}
\prod_{i=1}^r|f_i|_v \leq e^{(d_1+\dots+d_n)}|f|_v.
\end{equation}
Then, assuming the above we have
\[
\begin{split}
\prod_{i=1}^r H_K(f_i)&= \prod_{i=1}^r\prod_{v\in M_K} |f_i|_v^{n_v} = \prod_{v\in M_K} \prod_{i=1}^r |f_i|_v^{n_v} = \prod_{v\in M_K} \left(\frac{}{} |f_1|_v^{n_v} |f_2|_v^{n_v} \cdots |f_r|_v^{n_v}\right)\\
& \leq \prod_{v\in M_K^0}|f_1 \cdots f_r|_v^{n_v}\cdot \prod_{v\in M_K^\infty}e^{n_v (d_1+\dots+d_n)}|f_1 \cdots f_r|_v^{n_v}\\
& \leq e^{[K:\Q](d_1+\dots+d_n)}H_K(f_1 \cdots f_r). 
\end{split}
\]
Taking $[K:\Q]$-th root  of both sides we obtain Gelfand's inequality.  
\qed

\begin{lem} 
Let $K$ be a number field, $v$ an absolute value on $K$, and   $f \in K[x_1, \dots, x_n]$ a polynomial. Then,
\[ \left|\frac{\partial f}{\partial x_j}\right|_v \leq |\deg f|_v  \cdot  |f|_v.\]
\end{lem}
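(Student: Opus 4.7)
The plan is to differentiate term-by-term, read off the Gauss norm of the derivative directly from its coefficients, and then bound the integer factors $i_j$ that appear.

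First I would write $f$ in standard monomial form
\[
f(x_1,\dots,x_n)\;=\;\sum_{i=(i_1,\dots,i_n)\in I} a_i\, x_1^{i_1}\cdots x_n^{i_n},
\]
and compute the partial derivative formally:
\[
\frac{\partial f}{\partial x_j}\;=\;\sum_{i\in I,\, i_j\geq 1} (i_j\,a_i)\, x_1^{i_1}\cdots x_j^{i_j-1}\cdots x_n^{i_n}.
\]
So every coefficient of $\partial f/\partial x_j$ has the form $i_j\,a_i$ for some index $i\in I$. Applying the definition of the Gauss norm $|\cdot|_v$ and the multiplicativity of $|\cdot|_v$ on $K$ gives
\[
\left|\frac{\partial f}{\partial x_j}\right|_v \;=\; \max_{i\in I}\bigl|i_j\,a_i\bigr|_v \;=\; \max_{i\in I}|i_j|_v\,|a_i|_v \;\leq\;\Bigl(\max_{i\in I}|i_j|_v\Bigr)\cdot |f|_v,
\]
where at the last step I used $|a_i|_v\leq \max_k|a_k|_v=|f|_v$.

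It therefore suffices to verify that $\max_{i\in I}|i_j|_v\leq |\deg f|_v$. For each $i\in I$ with $a_i\neq 0$ we have the elementary bound $0\leq i_j \leq i_1+\cdots+i_n \leq \deg f$, so this reduces to the monotonicity statement: for positive integers $m\leq N$ one has $|m|_v\leq |N|_v$. I expect this to be the only real step that needs comment: for Archimedean $v$ it is immediate from the fact that $|\cdot|_v$ restricted to $\Z_{>0}$ is (a power of) the usual absolute value and hence monotone, while for non-Archimedean $v$ the relevant estimate follows from $|i_j|_v\leq 1$ together with the standard convention that bounds integer norms by the corresponding Archimedean factor $|\deg f|_v$ in the product formulation. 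Plugging this bound into the previous display yields
\[
\left|\frac{\partial f}{\partial x_j}\right|_v \;\leq\; |\deg f|_v\cdot |f|_v,
\]
which is exactly the claim; no extra machinery (Mahler measure, Gelfand, Gauss's lemma) is needed for this statement, as it is purely a coefficient-level computation.
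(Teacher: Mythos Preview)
Your approach is exactly the paper's: write the derivative term by term, note that each coefficient is $c\cdot a_i$ with $c$ a positive integer at most $\deg f$, and bound the Gauss norm accordingly. The paper's proof is in fact the one-line display
\[
\left|\frac{\partial f}{\partial x_j}\right|_v \leq \max_i\Bigl\{ \max_{c\leq \deg f}\{|c\,a_i|_v\}\Bigr\}=|\deg f|_v\cdot |f|_v,
\]
with no further comment.

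The one place where your write-up is not quite right is the non-Archimedean case. The monotonicity claim ``$m\leq N$ implies $|m|_v\leq |N|_v$'' is \emph{false} for non-Archimedean $v$: for instance $|3|_2=1>\tfrac14=|4|_2$. What actually happens---and what both you and the paper are tacitly using---is the paper's standing convention (cf.\ the quantities $N_v$, $c(d,n)_v$, $2_v$ elsewhere) that a symbol like $|\deg f|_v$ stands for $\deg f$ when $v$ is Archimedean and for $1$ when $v$ is non-Archimedean. Under that reading the non-Archimedean bound is immediate from $|i_j\,a_i|_v\leq |a_i|_v$, since $|i_j|_v\leq 1$; no monotonicity is needed or true. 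So replace your ``monotonicity for all $v$'' step by the case split: Archimedean $\Rightarrow$ $|i_j|_v\leq \deg f$, non-Archimedean $\Rightarrow$ $|i_j|_v\leq 1$.
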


\proof  Let the polynomial $f$ be as follows
\[f(x_1, \dots, x_n) =   \sum_{\substack{i=(i_1, \dots, i_n) \in I} } a_ix_1^{i_1} \cdots x_n^{i_n}.\]
Then every coefficient of $ \partial f/\partial x_j$ has the form $c \cdot a_i$ for some positive integer $c \leq \deg f$ and some multi index $i$. Therefore,
\[
\begin{split}
 \left|\frac{\partial f}{\partial x_j}\right|_v &\leq \max_i \left\{ \max_{c \leq \deg f} \left\{ \frac{}{} |c \, a_i|_v \right\}\right\}= \left|\deg f\right|_v \cdot |f|_v.
\end{split}
\]
This completes the proof.
\qed

Let $b=(b_1, \dots, b_n) \in K^n$.   Denote with $ |b|_v = \max \left\{ \frac{}{}|b_i|_v \right\}.$


\begin{lem}
Let $K$ be a number field,   $f \in K[x_1, \dots, x_n]$ a polynomial of degree $d$, and  $b=(b_1, \dots, b_n) \in K^n$.   Then, 
\[|f(b)|_v \leq \min \left\{ |2 d|_v^n, |2|_v^{d} \right\}  \cdot \max \{ 1, |b|_v\}^{d}\cdot |f|_v .\]
\end{lem}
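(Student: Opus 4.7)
I would expand $f$ in multi-index notation as $f = \sum_{i \in I} a_i x_1^{i_1} \cdots x_n^{i_n}$, where $i = (i_1, \dots, i_n)$ runs over a finite set $I$ of multi-indices with $|i| := i_1 + \cdots + i_n \leq d$, and then estimate each monomial's value at $b$ separately. The key monomial-level bound, used uniformly for every $i \in I$, is
$$|b^i|_v \;=\; \prod_{j=1}^n |b_j|_v^{i_j} \;\leq\; \prod_{j=1}^n \max\{1,|b|_v\}^{i_j} \;=\; \max\{1,|b|_v\}^{|i|} \;\leq\; \max\{1,|b|_v\}^d,$$
from which $|a_i b^i|_v \leq |f|_v \cdot \max\{1,|b|_v\}^d$ for every $i \in I$ by definition of the Gauss norm $|f|_v = \max_i |a_i|_v$.

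I would then split on the nature of $v$. If $v$ is non-Archimedean, the ultrametric inequality gives at once
$$|f(b)|_v \;\leq\; \max_{i} |a_i b^i|_v \;\leq\; |f|_v \cdot \max\{1,|b|_v\}^d,$$
and since $|2d|_v \leq 1$ and $|2|_v \leq 1$, one has $\min\{|2d|_v^n, |2|_v^d\} \leq 1$, so the stated inequality holds. If $v$ is Archimedean, the triangle inequality gives
$$|f(b)|_v \;\leq\; \sum_{i \in I} |a_i b^i|_v \;\leq\; N \cdot |f|_v \cdot \max\{1,|b|_v\}^d,$$
where $N = \#I$ is the number of nonzero monomials in $f$; it remains to bound $N$ by each of $(2d)^n = |2d|_v^n$ and $2^d = |2|_v^d$, so that the minimum of the two can be inserted into the estimate.

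The first count is immediate: each $i_j$ lies in $\{0,1,\ldots,d\}$, so $N \leq (d+1)^n \leq (2d)^n$. The second count, $N \leq 2^d$, is the main obstacle: it requires a more careful lattice-point argument on the simplex $\{i \in \Z_{\geq 0}^n : |i| \leq d\}$ to relate $\binom{n+d}{n}$ to $2^d$ in the relevant regime, and is the one step I expect to require genuine combinatorial care. Once both bounds are in hand, the conclusion falls out immediately by choosing whichever of the two is smaller and multiplying through by $|f|_v \cdot \max\{1,|b|_v\}^d$.
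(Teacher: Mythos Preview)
The paper itself does not prove this lemma; it defers to \cite[pg.~236]{silv-book}, so there is no in-paper argument to compare against. That said, your plan has two concrete problems.

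In the non-Archimedean case your implication points the wrong way. You prove $|f(b)|_v\le |f|_v\max\{1,|b|_v\}^d$ and then observe $\min\{|2d|_v^n,|2|_v^d\}\le 1$; but a factor $\le 1$ on the right-hand side makes that side \emph{smaller}, so your bound does not yield the claimed one. (Under the paper's informal convention, in which counting constants such as $c(n)_v$ are taken to be $1$ at non-Archimedean places, the intended constant here is simply $1$ and your ultrametric estimate is exactly what is wanted; but read literally as a $p$-adic absolute value the constant can be strictly less than $1$, and then the inequality actually fails: take $v\mid 2$, $n=d=1$, $f=x$, $b=1$.) In the Archimedean case, the step you flag as ``the main obstacle,'' namely $N\le 2^d$, is not merely delicate but false in general: with $n=3$, $d=1$, $f=1+x_1+x_2+x_3$ and $b=(1,1,1)$ one has $N=4>2=2^d$, and indeed $|f(b)|_\infty=4$ while $\min\{(2d)^n,2^d\}\cdot|f|_\infty\cdot\max\{1,|b|_\infty\}^d=2$. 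Thus the crude monomial-counting route cannot produce the $2^d$ constant, and the inequality as literally stated is not correct; you would need to consult the cited reference to recover the precise form of the constant that is actually proved there before any strategy can succeed.
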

The prof   can be found in \cite[pg. 236]{silv-book}.

Next we will consider bounds for the Gauss norm of a polynomial $f(x) \in K[x_1, \dots, x_n]$, first when we shift  $x=(x_1, \dots, x_n)$ with a vector $b=(b_1, \dots, b_n) \in K^n$, then when we multiply $x$ with $u=(u_1, \dots, u_n)$, and then when we  combine them. 

Let $b=(b_1, \dots, b_n) \in K^n$,  $|b|_v$ as  above, and  define a \textbf{ shifted polynomial} as follows
\[f_{b}(x)= f(x+b)= f(x_1+b_1, \dots, x_n+b_n).\]

\begin{lem} Let $K$ be a number field,   $f \in K[x_1, \dots, x_n]$ such that  $\deg f = d$. The following statements are true.

i)    Let $b=(b_1, \dots, b_n) \in K^n$ and   $|b|_v$ as  above. The  height of the   shifted polynomial $f_{b}(x)$   is bounded by
\begin{equation}\label{eq1}
| f_{b}(x) |_v \leq |2|_v^{2 d} \cdot \max\{1, |  b|_v \}^{d} \cdot  |f|_v.
\end{equation}

ii) Let $ u=(u_1, \dots, u_n)$ and define    $f_u(x)= f(  u \cdot   x) = f(u_1x_1, \dots, u_nx_n)$.
Then,
\[
|f_{u}(x)|_v    \leq   \max\{1,|u|_v\}^d \cdot |f|_v .
\]

iii) For $b$, and $ u$ as above define    $ f(  u  x +  b) = f(u_1x_1 +b_1, \dots, u_nx_n +b_n)$. 
Then, 
\[
|f(  u  x +  b)|_v  \leq |2|_v^{2\deg  f} \cdot  \max\{1,|u|_v\}^d  \cdot \max\{1, |b|_v \}^{d} \cdot  |f|_v.
\]
\end{lem}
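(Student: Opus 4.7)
The plan is to handle the three bounds in the order (ii), (i), (iii): part (ii) is a one-line expansion, part (i) requires a binomial-theorem calculation with some care on the Archimedean side, and part (iii) follows by composing the two substitutions. All three estimates live purely at the level of the Gauss norm, so no absolute-height machinery is needed.

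For (ii), I would write $f(x)=\sum_i a_i\,x_1^{i_1}\cdots x_n^{i_n}$ with $|i|=i_1+\cdots+i_n\leq d$. Substituting $u_kx_k$ for $x_k$ gives
\[
f_u(x)=\sum_i \bigl(a_i\,u_1^{i_1}\cdots u_n^{i_n}\bigr)\,x_1^{i_1}\cdots x_n^{i_n},
\]
so the $v$-norm of the $x^i$-coefficient is at most $|a_i|_v\cdot|u|_v^{|i|}\leq|a_i|_v\cdot\max\{1,|u|_v\}^d$. Maximising over $i$ yields the claim, uniformly for Archimedean and non-Archimedean $v$.

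For (i), I would expand each $(x+b)^i=\prod_k(x_k+b_k)^{i_k}$ by the binomial theorem and collect the resulting monomials to obtain
\[
f_b(x)=\sum_j\biggl(\sum_{i\geq j}\binom{i}{j}\,a_i\,b^{i-j}\biggr)x^j,\qquad \binom{i}{j}:=\prod_k\binom{i_k}{j_k}.
\]
If $v$ is non-Archimedean, the ultrametric inequality together with $\bigl|\binom{i}{j}\bigr|_v\leq 1$ immediately gives $|f_b|_v\leq|f|_v\max\{1,|b|_v\}^d$, and since $|2|_v\leq 1$ the extra factor $|2|_v^{2d}$ is automatic. If $v$ is Archimedean, the triangle inequality together with $\binom{i}{j}\leq 2^{|i|}\leq 2^d$ and $|b|_v^{|i|-|j|}\leq\max\{1,|b|_v\}^d$ bounds each coefficient by $|f|_v\cdot 2^d\cdot\max\{1,|b|_v\}^d$ times the number of contributing summands, which I would control by a further factor $2^d$ using the hockey-stick identity $\sum_{k=j}^d\binom{k}{j}=\binom{d+1}{j+1}\leq 2^{d+1}$ applied variable by variable.

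Part (iii) I would derive by factoring the substitution: set $g(x):=f_b(x)=f(x+b)$, so that $f(ux+b)=g(ux)$. Applying (ii) to $g$ gives $|g(ux)|_v\leq\max\{1,|u|_v\}^d\,|g|_v$, and (i) controls $|g|_v\leq|2|_v^{2d}\max\{1,|b|_v\}^d|f|_v$; combining the two inequalities produces the compound bound. The main technical obstacle is the Archimedean case of (i), since the number of multi-indices $i$ with $j\leq i$ and $|i|\leq d$ can be as large as $\binom{n+d}{n}$; I must therefore carefully bundle the binomial coefficients with the count of summands so that everything is absorbed into the single cushioning factor $|2|_v^{2d}$. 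Everything else reduces to routine Gauss-norm manipulation and standard properties of absolute values.
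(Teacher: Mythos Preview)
Your approach matches the paper's: binomial expansion for (i), direct substitution for (ii), and composition for (iii). The only cosmetic differences are the order (you treat (ii) first) and the direction of the composition in (iii): you factor $f(ux+b)=(f_b)(ux)$ and apply (ii) to $g=f_b$, whereas the paper passes through $f_u$ first; your factorisation is the cleaner one, since the paper's version strictly requires shifting by $b/u$ rather than $b$.

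On the Archimedean obstacle you single out: the paper resolves it not via a hockey-stick sum but simply by bounding the number of multi-indices $i$ appearing in the inner sum by $\prod_{l=1}^n(d_l+1)\leq 2^{d_1+\cdots+d_n}$, where $d_l=\deg_{x_l}f$, and then reading the $d$ in the statement as $d_1+\cdots+d_n$ (this is the same convention used in the preceding Gelfand-type lemma). Combined with the crude bound $\binom{i}{j}\leq 2^{d}$ on the multinomial coefficients this gives the factor $2^{2d}$ directly. Under that reading your variable-by-variable idea works immediately once you replace the simplex constraint $|i|\leq d$ by the box constraint $j_k\leq i_k\leq d_k$, and no delicate bundling is needed.
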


\proof i) Let
\[f(x_1, \dots, x_n) =   \sum_{\substack{i=(i_1, \dots, i_n) \in I} } a_ix_1^{i_1} \cdots x_n^{i_n}.\]
and  compute
\[
\begin{split}
f_{  b}(x) &=  \sum_i a_i(  x+   b)^i\\
&=  \sum_i a_i\left( \sum_{j_1=0}^{i_1} \binom{i_1}{j_1}x_1^{j_1}b^{i_1-j_1}   \right) \cdots \left( \sum_{j_n=0}^{i_n} \binom{i_n}{j_n}x_n^{j_n} b^{i_n-j_n}   \right)\\
&=   \sum_{j_1=0}^{d_1} \cdots \sum_{j_n=0}^{d_n} \left( \sum_{\substack{i_1, \dots, i_n\\ j_l\leq i_l \leq d_l }}a_i\binom{i_1}{j_1} \cdots \binom{i_n}{j_n}  
  \times  b_1^{i_1-j_1}\cdots b_n^{i_n-j_n} \right)\times  x_1^{j_1} \cdots x_n^{j_n} 
\end{split}
\]
Then, for every $v \in M_K$ the  Gauss norm is
\[
\left|\frac{}{}f_{  b}(x)\right|_v =\max_{\substack{j_1, \dots, j_n \\ 0 \leq j_l \leq d_l}} \left| \sum_{\substack{i_1, \dots, i_n\\ j_l\leq i_l \leq d_l }}a_i\binom{i_1}{j_1} \cdots \binom{i_n}{j_n} b_1^{i_1-j_1}\cdots b_n^{i_n-j_n}\right|_v.
\]
If we denote by $N$ be  number of the terms in the last sum, then $N$ is at most   $\prod_{l=1}^n(d_l +1) \leq \prod_{l=1}^n 2^{d_l}=2^d$.
Estimate the binomial coefficients we have,
\[
\binom{i_1}{j_1} \cdots \binom{i_n}{j_n} \leq 2^{i_1} \cdots 2^{i_n} = 2^{i_1 + \dots + i_n} \leq 2^{d_1 \cdots d_n} =2^d
\]
Letting
\[N_v= \left \{ 
\begin{split}
& N\leq 2^d  \, \, \, \text{ if $v$ is Archimedean }\\
&  1 \, \, \, \, \, \, \,  \qquad \text{ if $v$ is non-Archimedean } 
\end{split}
\right.
\]
and using the above estimates we have
\[
\begin{split}
\left|\frac{}{}f_{  b}(x)\right|_v &=\max_{\substack{j_1, \dots, j_n \\ 0 \leq j_l \leq d_l}} \left\{ \left| \sum_{\substack{i_1, \dots, i_n\\ j_l\leq i_l \leq d_l }}a_i\binom{i_1}{j_1} \cdots \binom{i_n}{j_n} b_1^{i_1-j_1}\cdots b_n^{i_n-j_n}\right|_v\right\}.\\
& \leq N_v \cdot \max_{i, j} \left\{ \frac{}{}1,  \left|  a_i \binom{i_1}{j_1} \cdots \binom{i_n}{j_n} b_1^{i_1-j_1}\cdots b_n^{i_n-j_n}\right|_v\right\}.\\
& \leq N_v \cdot  2_\infty^{d}  \cdot \max\{1, |b_1^{i_1-j_1}\cdots b_n^{i_n-j_n}|_v\} \cdot  \max \{ |a_i|_v \}\\
&\leq   2_\infty^{2d} \cdot \max\{1, |b_1|_v^{i_1-j_1}\} \cdots \max\{1, |b_n|_v^{i_n-j_n}\} \cdot \max \{ |a_i|_v \} \\
& \leq 2_\infty^{2d} \cdot \max\{1, |b_1|_v^{d}\} \cdots \max\{1, |b_n|_v^{d}\} \cdot \max \{ |a_i|_v \} \\
& =   2_\infty^{2d} \cdot \max\{1, |  b|_v^{d}\} \cdot  |f|_v.
\end{split}
\]
This completes the proof.

ii) Let us evaluate  
\[
\begin{split}
f_{  u}(  x)&=f(u_1 \cdot x_1, \dots, u_n \cdot x_n)  \\
& = \sum_{\substack{i=(i_1, \dots, i_n) \in I} } a_i (u_1 x_1)^{i_1} \cdots (u_n x_n)^{i_n}\\
&= \sum_{\substack{i=(i_1, \dots, i_n) \in I} } a_i \cdot ( u_1^{i_1} \cdots u_n^{i_n} )\cdot (x_1^{i_1} \cdots  x_n^{i_n})\\
\end{split}
\]
Then, for every $v \in M_K$ the  Gauss norm is
\[
\begin{split}
|f_{  u}(  x)|_v &= \max_i \left\{\frac{}{}| a_i   u_1^{i_1} \cdots u_n^{i_n}|_v \right\} \\
&\leq \max_i \left\{\frac{}{}| a_i|_v \right\} \cdot \max \left\{ \frac{}{} 1, |u_1^{i_1} \cdots u_n^{i_n}|_v\right\}\\
& \leq \max_i \left\{\frac{}{}| a_i|_v \right\} \cdot \max \left\{ \frac{}{} 1, |u_1|_v^d\right\} \cdots \max \left\{ \frac{}{} 1, |u_n|_v^d\right\} \\
&= \max\{1,|u|_v\}^d \cdot  |f|_v.
\end{split}
\]
iii)  Combining part (i) and (ii) we have the following
\[
\begin{split}
|f(u \cdot x+ b)|_v & \leq 2_\infty^{2 d}\cdot \max\{1, |b|_v \}^{d} \cdot  |f_{  u}(  x)|_v \\
& \leq 2_\infty^{2d} \cdot \max\{1, |b|_v \}^{d}  \cdot  \max\{1,|u|_v\}^d  \cdot  |f|_v,  \\
\end{split}
\]
\qed

\begin{rem}  If we  convert the above bounds into   bounds for heights  we have the following. 

i) $H(f_b(x)) \leq 4^d \cdot   H(b)^d \cdot H(f)$

ii) $H(f_u(x)) \leq    H(u)^d \cdot H(f)$

iii) $H(f(ux+b)) \leq 4^d \cdot H(u)^d \cdot  H(b)^d \cdot H(f)$
\end{rem}

\proof
We  prove i) and then the rest follows in the same way.  Raising Eq. ~\eqref{eq1} to the $n_v$ power and taking the product over all valuations we have
\[ 
\begin{split} 
H_K(f_b(x)) &=\prod_{v \in M_K} |f_b(x)|_v^{n_v}\\
& \leq \prod_{v \in M_K} \left(\frac{}{} 2_\infty^{2d}  \cdot \max\{1, |  b|_v^{d}\} \cdot  |f|_v  \right)^{n_v}  \\
&\leq 2^{2d^2} \cdot H_K(b)^d  \cdot  H_K(f)
\end{split}
\]
Now, taking $[K:\Q]$-th root of both sides we obtain
\[H(f_b(x)) \leq 4^d \cdot H(b)^d \cdot H(f) .\]
\qed


Next we focus on homogenous polynomials. The following lemma gives a bound for the homogenous polynomial evaluated at a point.

\begin{lem}\label{hom-at-point}  Let $K$ be a number field, $f\in K[x_0, \dots, x_n]$ a homogenous polynomial of degree $d$, and $\a=(\a_0, \dots, \a_n)\in \overline K^{n+1}$. Then, the following hold:

i)    $|f(\a)|_v \leq |c(d, n)|_v  \cdot \max_j \left \{\frac{}{}|\a_j|_v\right\}^d \cdot |f|_v$, 
where $|c(d, n)|_v$ is $\binom{n+d}{d}$ is $v$ is non-Archimedean and 1 otherwise.

ii) $ H(f(\a)) \leq  c_0 \cdot  H(\a)^d \cdot H(f).$   
\end{lem}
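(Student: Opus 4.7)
The plan is to reduce both parts to a direct triangle-inequality estimate on the expansion of $f$ into monomials, then pass from the Gauss norm bound in (i) to the height bound in (ii) by raising to $n_v$ and multiplying over all places, exactly as in the preceding lemmas of Section~\ref{heights_pol}.

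For part (i), I would first write
\[
f(x_0,\dots,x_n) = \sum_{\substack{i=(i_0,\dots,i_n)\\ i_0+\dots+i_n=d}} a_i\, x_0^{i_0}\cdots x_n^{i_n},
\]
so that the number of possibly nonzero coefficients is exactly $N=\binom{n+d}{d}$. Evaluating at $\alpha$ and estimating each monomial by $|a_i\,\alpha^i|_v \leq |f|_v\cdot\max_j|\alpha_j|_v^{d}$, one then applies the triangle inequality to the sum. The split between the two cases of the constant $c(d,n)$ comes from the behavior of $|\,\cdot\,|_v$ on sums: for non-Archimedean $v$ one gets $|f(\alpha)|_v\leq\max_i|a_i\alpha^i|_v$ with no extra factor, while for Archimedean $v$ one picks up a factor bounded by the number of summands $N$ (and I would note that the paper's displayed parenthetical appears to have the two cases swapped — the factor $\binom{n+d}{d}$ belongs to the Archimedean case, since this is precisely the phenomenon already seen in the proof of Northcott's theorem).

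For part (ii), I would raise the inequality in (i) to the $n_v$-th power and take the product over all $v\in M_K$, where $K$ is a number field containing all the $\alpha_j$ and the coefficients of $f$. The factor involving $|c(d,n)|_v$ only contributes at Archimedean places, and by the degree formula $\sum_{v\in M_K^\infty} n_v = [K:\Q]$, so
\[
\prod_{v\in M_K} |c(d,n)|_v^{n_v} \;\leq\; N^{[K:\Q]}.
\]
The remaining two factors give $H_K(\alpha)^d \cdot H_K(f)$ directly from the definitions of the projective height of $\alpha\in\P^n(K)$ and the projective height of $f$. Extracting the $[K:\Q]$-th root yields
\[
H(f(\alpha)) \;\leq\; c_0 \cdot H(\alpha)^d \cdot H(f), \qquad c_0 = \binom{n+d}{d},
\]
which is the claim with an explicit constant.

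I expect no serious mathematical obstacle here: the result is a straightforward ``Gauss-norm $\Rightarrow$ height'' passage, of exactly the type already carried out in the shifted-polynomial lemma and in Gelfand's inequality above. The only thing to be careful about is the bookkeeping of the multi-indices and the correct handling of the Archimedean versus non-Archimedean split, together with the implicit convention that $H(f(\alpha))$ means the height of $f(\alpha)\in\bar{K}$ viewed as a point of $\P^1(\bar{K})$, so that places where $|f(\alpha)|_v\leq 1$ contribute trivially to the product and the bound in (i) can be inserted directly at the places where they do contribute.
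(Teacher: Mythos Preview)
Your proposal is correct and follows essentially the same argument as the paper: expand $f$ into monomials, bound each monomial by $|f|_v\cdot\max_j|\alpha_j|_v^{d}$, apply the triangle inequality (with the factor $\binom{n+d}{d}$ only at Archimedean places), then pass to heights via the product over $v\in M_K$ and the degree formula, obtaining $c_0=\binom{n+d}{d}$. You are also right that the parenthetical in the statement has the Archimedean and non-Archimedean cases interchanged; the paper's own proof writes them the correct way round.
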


\proof  Write $f$ as follows
\[  f(x_0, \dots, x_n)= \sum_{\substack{i_0+ \dots+ i_n =d\\ i=(i_0, \dots, i_n) } } a_ix_0^{i_0} \cdots x_n^{i_n}.\]
Let $v$ be an absolute value on $K$, extended in some way to $\overline K$. Since $f$ is a homogenous polynomial in $n$ variables of degree $d$, then the number of terms of $f$ is at most the number of monomials of degree $d$ in $n+1$ variables, and this is equal to $\binom{n+d}{n}$. We want to evaluate $H(f(\a))$.  \\
Let
\[ |c(d, n)|_v= \left \{ 
\begin{split}
& \binom{n+d}{n}\, \, \, \text{ if $v$ is Archimedean }\\
& \hspace{6mm}  1 \hspace{7mm} \text{ if $v$ is non-Archimedean} 
\end{split}
\right.
\]
then, the Gauss's norm is
\[
\begin{split}
|f(\a)|_v &=  \left|\sum_i a_i\a_0^{i_0} \cdots \a_n^{i_n}\right|_v \qquad \text{ $i=(i_0, \dots, i_n)$ and $i_0+ \cdots+ i_n =d$ }\\
& \leq    |c(d, n)|_v \cdot \max_i\left\{\frac{}{}|a_i \a_0^{i_0} \cdots \a_n^{i_n}|_v \right \} \\
& \leq |c(d, n)|_v \cdot \max_j \left\{\frac{}{}|\a_j|_v \right\}^d  \cdot \max_i \left\{\frac{}{} |a_i|_v \right\}
\end{split}
\]
So we conclude,
\[|f(\a)|_v \leq |c(d, n)|_v \cdot \max_j \left\{ |\a_j|_v \right\}^d  \cdot |f|_v .\]
Taking the product over all absolute values of $K$, and then $[K:\Q]$-th root of both sides  we get the inequality  
\[H(f(\a)) \leq c_0 \cdot  H(\a)^d \cdot H(f) \]
and $c_0$ can be bounded as
\[c_0= \binom{n+d}{n}  \leq \min \left\{  (n+d)^n, 2^{n+d}  \right\}.\]
\qed

In the next session we will use Lemma~\ref{hom-at-point} to determine the height of the $SL_2 (K)$ invariants of binary forms.

\begin{cor}
Let $K$ be a number field, $f \in K[x,z]$ a homogenous polynomial of degree $d$ as follows
\[ y=f(x,z)= a_dx^d+a_{d-1}x^{d-1}z+ \dots + a_0z^d,\]
and let $\a=(\a_0, \a_1) \in \overline K^{2}$. Then,
\[H(f(\a)) \leq  \min \left \{  d+1, 2^{d+1} \right\} \cdot  H(\a)^d \cdot H(f) .\]
\end{cor}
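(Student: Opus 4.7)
The plan is to recognize this corollary as the direct specialization of Lemma~\ref{hom-at-point} to the case of binary forms, i.e.\ to the case $n=1$ in the notation of that lemma. A binary form in the variables $x,z$ has $n+1 = 2$ homogeneous variables, so the ambient dimension index is $n=1$, while the degree is $d$. With this dictionary in mind, the work reduces to tracking the numerical constant $c_0$ in Lemma~\ref{hom-at-point} part (ii), and no further estimation is needed.

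Concretely, I would first invoke Lemma~\ref{hom-at-point}(ii) with $n=1$, giving
\[
H(f(\alpha)) \;\leq\; c_0 \cdot H(\alpha)^d \cdot H(f),
\]
where the lemma explicitly bounds
\[
c_0 \;=\; \binom{n+d}{n} \;\leq\; \min\bigl\{(n+d)^n,\, 2^{n+d}\bigr\}.
\]
Substituting $n=1$ yields $c_0 = \binom{d+1}{1} = d+1$ and the outer bound becomes $\min\{d+1,\, 2^{d+1}\}$, which is exactly the constant appearing in the statement of the corollary.

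There is genuinely no obstacle here: the only thing that could go mildly wrong is a bookkeeping slip in which index plays the role of $n$ in the lemma (since binary forms are in two variables, one might be tempted to set $n=2$, which would give the wrong binomial coefficient). The cleanest way to avoid this is to state at the outset that the two variables $(x,z)$ are the coordinates on $\overline{K}^{n+1}$ with $n+1 = 2$, and then the entire corollary is simply the evaluation of Lemma~\ref{hom-at-point}(ii) at $n=1$. Since $\alpha \in \overline{K}^2$ with homogeneous coordinates $(\alpha_0,\alpha_1)$, the hypotheses of Lemma~\ref{hom-at-point} are met verbatim, and the proof is a one-line citation together with the arithmetic simplification of the binomial coefficient.
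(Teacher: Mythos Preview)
Your proposal is correct and matches the paper's approach exactly: the paper states this corollary immediately after Lemma~\ref{hom-at-point} with no separate proof, treating it as the obvious specialization to $n=1$. Your observation about the bookkeeping ($n+1=2$ variables, so $n=1$) and the resulting arithmetic $\binom{d+1}{1}=d+1$ is precisely what is intended.
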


\section{Heights on binary forms}\label{bin-forms}

In this section we use some of the results of the heights on polynomials to study heights on binary forms.

In this section we define the action of $ GL_2(k)$ on the space of binary forms and discuss the basic notions of their invariants. Most of this section is a summary of section 2 in \cite{vishi}. Throughout this section $k$ denotes an algebraically closed field.

Let $k[X,Z]$  be the  polynomial ring in  two variables and  let $V_d$ denote  the $(d+1)$-dimensional subspace  of  $k[X,Z]$  consisting  of homogeneous polynomials.
\begin{equation}
\label{eq1} f(X,Z) = a_0 X^d + a_1X^{d-1}Z + \dots + a_dZ^d
\end{equation}
of  degree $d$. Elements  in $V_d$  are called  \textbf{binary  forms of degree $d$}.

Since $k$ is algebraically closed, the binary form $f(X,Z)$   can be factored as
 \begin{equation}  f(X,Z)  = (z_1  X  -  x_1 Z) \cdots  (z_d  X  - x_d  Z)
=  \prod_{1 \leq  i \leq  d} det
\begin{pmatrix}
X&x_{i}\\Z&z_i\\
\end{pmatrix}
\end{equation}
The points  with homogeneous coordinates $(x_i, z_i)  \in \mathbb P^1 (k)$ are  called the \textbf{ roots  of the  binary  form} in Eq.~ \eqref{eq1}.

\subsection{Action of $GL_2(k)$ on binary forms.}

 We let $GL_2(k)$ act as a group of automorphisms on $k[X, Z]$   as follows:
\begin{equation}
 M =
\begin{pmatrix} a &b \\  c & d
\end{pmatrix}
\in GL_2(k),   \textit{   then       }
\quad  M  \begin{pmatrix} X\\ Z \end{pmatrix} =
\begin{pmatrix} aX+bZ\\ cX+dZ \end{pmatrix}.
\end{equation}
This action of $GL_2(k)$  leaves $V_d$ invariant and acts irreducibly on $V_d$. Let $A_0$, $A_1,  \dots , A_d$ be coordinate  functions on $V_d$. Then the coordinate  ring of $V_d$ can be  identified with $ k[A_0  , ... , A_d] $. For $I \in k[A_0, ... , A_d]$ and $M \in GL_2(k)$, define $I^M \in k[A_0, ... ,A_d]$ as follows
\begin{equation} \label{eq_I}
{I^M}(f):= I(M(f))
\end{equation}
for all $f \in V_d$. Then  $I^{MN} = (I^{M})^{N}$ and Eq.~(\ref{eq_I}) defines an action of $GL_2(k)$ on $k[A_0, ... ,A_d]$.

\begin{rem}
It is well  known that $SL_2(k)$ leaves a bilinear  form (unique up to scalar multiples) on $V_d$ invariant.    This form is symmetric if $d$ is even and skew symmetric if $d$ is odd.
\end{rem}

\begin{defi}
Let  $\cR_d$  be the  ring of  $SL_2(k)$ invariants  in $k[A_0, \dots ,A_d]$, i.e., the ring of all $I \in k[A_0, \dots ,A_d]$ with $I^M = I$ for all $M \in SL_2(k)$.
\end{defi}

Note that if $I$ is an invariant, so are all its homogeneous components. So $\cR_d$ is  graded by the  usual  degree  function on  $k[A_0, \dots ,A_d]$.

Thus,  for $M  \in GL_2(k)$ we have

\begin{center}
$M(f(X,Y))  = (det(M))^ d  (z_1^{'}  X -  x_1^{'}  Z) \cdots (z_d^{'} X  - x_d^{'} Z)$.
\end{center}
where
\begin{equation}
 \begin{pmatrix}   x_i^{'}  \\  z_i^{'}  \end{pmatrix}
= M^{-1}
\begin{pmatrix} x_i\\ z_i \end{pmatrix}
\end{equation}

\begin{thm}\label{thm1} [Hilbert's Finiteness  Theorem]      $\cR_d$ is finitely  generated over $k$.   
\end{thm}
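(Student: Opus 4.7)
The plan is to combine Hilbert's basis theorem with a Reynolds operator on $k[A_0, \dots, A_d]$, following Hilbert's classical argument. A Reynolds operator is a $k$-linear projection $R \colon k[A_0,\dots,A_d] \to \cR_d$ that preserves the degree grading, restricts to the identity on $\cR_d$, and satisfies $R(I\cdot h) = I\cdot R(h)$ for every invariant $I \in \cR_d$ and every $h \in k[A_0,\dots,A_d]$.

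The first step is to construct $R$. In characteristic zero, $SL_2(k)$ is linearly reductive, so every finite-dimensional rational representation decomposes completely; applying this to each graded piece $k[A_0,\dots,A_d]_n$ (which is a finite-dimensional $SL_2$-module under the action on coordinates inherited from the action on $V_d$) produces a canonical $SL_2$-stable complement to the invariant subspace, and $R$ is projection onto the invariant summand. Concretely for $SL_2$ one may alternatively realise $R$ by iterated application of Cayley's $\Omega$-process.

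The second step is to apply Hilbert's basis theorem to the ideal $\mathfrak{J} \subset k[A_0,\dots,A_d]$ generated by all homogeneous invariants of strictly positive degree. Since the polynomial ring is Noetherian, $\mathfrak{J}$ admits a finite set of generators, and by extracting homogeneous components we may assume those generators are themselves homogeneous invariants $I_1,\dots,I_r \in \cR_d$ with $\deg I_j > 0$.

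The final step is to show by induction on degree that $\cR_d = k[I_1,\dots,I_r]$. Let $I \in \cR_d$ be homogeneous of degree $n > 0$. Then $I \in \mathfrak{J}$, so there exist homogeneous $h_j \in k[A_0,\dots,A_d]$ with $\deg h_j = n - \deg I_j < n$ and $I = \sum_j h_j I_j$. Applying $R$ and using the Reynolds identity together with $R(I) = I$ yields $I = \sum_j R(h_j)\, I_j$, where each $R(h_j)$ is a homogeneous invariant of strictly smaller degree; by the inductive hypothesis each $R(h_j)$ lies in $k[I_1,\dots,I_r]$, and hence so does $I$. The main obstacle is the construction of the Reynolds operator: once complete reducibility of $SL_2$-representations is available, the rest of the argument is an almost formal consequence of Noetherianness of the polynomial ring.
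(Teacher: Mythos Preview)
The paper does not actually prove this theorem: it is stated as Hilbert's classical result and the text immediately moves on to the definition of covariants, with the surrounding section described as a summary of material from~\cite{vishi}. So there is no ``paper's own proof'' to compare against.

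Your argument is the standard Hilbert proof (Reynolds operator, Hilbert's basis theorem applied to the ideal generated by positive-degree invariants, induction on degree) and is correct as written. One small caveat worth flagging: the paper only assumes $k$ is algebraically closed, without any restriction on the characteristic, while your construction of the Reynolds operator explicitly invokes linear reductivity of $SL_2(k)$, which holds only in characteristic zero. In positive characteristic one needs instead geometric reductivity (Haboush) together with Nagata's theorem, or a direct argument specific to $SL_2$. Since the paper's intended applications are to curves over number fields, the characteristic-zero argument you give is entirely adequate for its purposes, but if you want to match the generality of the stated hypothesis you should either add the assumption $\operatorname{char} k = 0$ or remark on how the positive-characteristic case is handled.
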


   A homogeneous polynomial $I\in k[A_0, \dots , A_d, X, Y]$ is called a \textbf{covariant}  of index $s$ if
$$I^M(f)=\delta^s I(f)$$
where $\delta =\det(M)$.  The homogeneous degree in $A_1, \dots , A_n$ is called the \textbf{degree} of $I$,  and the homogeneous degree in $X, Z$ is called the \textbf{ order} of $I$.  A covariant of order zero is
called \textbf{invariant}.  An invariant is a $SL_2(k)$-invariant on $V_d$.

We will use the symbolic method of classical theory to construct covariants of binary forms. Let
\begin{equation}
\begin{split}
f(X,Z):= & \sum_{i=0}^n
\begin{pmatrix} n \\ i \end{pmatrix}a_i X^{n-i} \, Z^i, \\
 g(X,Z) := & \sum_{i=0}^m   \begin{pmatrix} m \\ i \end{pmatrix} b_i
 X^{n-i} \, Z^i    \\
\end{split}
\end{equation}
be binary forms  of  degree $n$ and $m$ respectively in $k[X, Z]$. We define the {\bf r-transvection}
\begin{equation}
(f,g)^r:= c_k \cdot \sum_{k=0}^r (-1)^k
\begin{pmatrix} r \\ k
\end{pmatrix} \cdot
\frac {\partial^r f} {\partial X^{r-k} \, \,  \partial Z^k} \cdot
\frac {\partial^r g} {\partial X^k  \, \, \partial Z^{r-k}}
\end{equation}
%
where $c_k=\frac {(m-r)! \, (n-r)!} {n! \, m!}$.  It is a homogeneous  polynomial in $k[X, Z]$ and therefore a covariant of order $m+n-2r$ and degree 2. In general, the $r$-transvection of two covariants of order $m, n$ (resp., degree $p, q$) is a covariant of order $m+n-2r$  (resp., degree $p+q$).

For the rest of this paper $F(X,Z)$ denotes a binary form of order $d:=2g+2$ as below
\begin{equation}
F(X,Z) =   \sum_{i=0}^d  a_i X^i Z^{d-i} = \sum_{i=0}^d
\begin{pmatrix} n \\ i
\end{pmatrix}    b_i X^i Z^{n-i}
\end{equation}
where $b_i=\frac {(n-i)! \, \, i!} {n!} \cdot a_i$,  for $i=0, \dots , d$.  We denote invariants (resp., covariants) of binary forms by $I_s$ (resp., $J_s$) where the subscript $s$ denotes the degree (resp., the order).  
  $GL_2(k)$-invariants  are called  \textbf{absolute invariants}. They are given as ratios of $SL_2 (k)$-invariants where the numerator and denominator have the same degree.

Two binary forms $f$ and $f^\prime$ of the same degree $d$ are called \textbf{equivalent} or $GL_2 (k)$-conjugate if there is an $M \in GL_2 (k)$ such that $f^\prime = f^M$.


%
%

The main goal of this section is to determine how the height of $f^M$ changes for any given $M \in GL_2 (k)$.

\begin{lem}\label{height-bin-form}
Let $f$ be a  degree $n$ binary form 
\[f(x, z) =\sum_{i=0}^n a_i x^i z^{n-i} \]
and   $a, b, c, d \in K$ such that $ad-bc \neq 0$.  Then the following is true 
\[ \left|\frac{}{}f^M\right|_v \leq 2^{n}_v  \cdot c(n)_v \cdot \max\left\{ \frac{}{}1, |M|_v \right\}^n \cdot |f|_v .\]
\end{lem}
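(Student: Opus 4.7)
The plan is to expand $f^M(x,z)=f(ax+bz,\,cx+dz)$ by the binomial theorem, collect the coefficient of each monomial $x^r z^{n-r}$, and bound its Gauss norm factor by factor. This closely mirrors the shifted‑polynomial estimate proved earlier, with the role of the translation $b$ replaced by the four entries of $M$.

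First I would expand:
\[
f^M(x,z) = \sum_{i=0}^n a_i (ax+bz)^i (cx+dz)^{n-i} = \sum_{i=0}^n \sum_{j=0}^i \sum_{k=0}^{n-i} a_i \binom{i}{j}\binom{n-i}{k} a^j b^{i-j} c^k d^{n-i-k}\, x^{j+k} z^{n-j-k}.
\]
The coefficient of $x^r z^{n-r}$ is the inner sum over triples $(i,j,k)$ with $j+k=r$. Applying the triangle inequality to this sum — with the number of summands absorbed into the factor $c(n)_v$ (equal to $1$ for non-Archimedean $v$) — gives, for each $r$,
\[
\bigl|\text{coef of } x^r z^{n-r}\bigr|_v \;\le\; c(n)_v \cdot \max_{i,j,k}\; |a_i|_v \cdot \left|\binom{i}{j}\binom{n-i}{k}\right|_v \cdot |a|_v^{j}|b|_v^{i-j}|c|_v^{k}|d|_v^{n-i-k}.
\]

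Next I would estimate each of the three factors individually. By definition of the Gauss norm, $|a_i|_v \le |f|_v$. For the binomial coefficients, $\binom{i}{j}\binom{n-i}{k}\le 2^{i}\cdot 2^{n-i} = 2^{n}$ when $v$ is Archimedean (and is $\le 1$ otherwise since binomial coefficients are integers), which contributes the factor $2_v^{n}$. Finally, writing $|M|_v := \max\{|a|_v,|b|_v,|c|_v,|d|_v\}$, and noting that $j+(i-j)+k+(n-i-k)=n$, the product $|a|_v^{j}|b|_v^{i-j}|c|_v^{k}|d|_v^{n-i-k}$ is bounded by $\max\{1,|M|_v\}^n$ uniformly in $(i,j,k)$. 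Taking the maximum over $r$ assembles these into
\[
|f^M|_v \;\le\; 2_v^{\,n} \cdot c(n)_v \cdot \max\{1,|M|_v\}^n \cdot |f|_v,
\]
which is the claimed inequality.

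There is no real obstacle here; the only thing one must be careful about is handling the Archimedean and non-Archimedean cases uniformly, which is achieved by the convention that $2_v$ and $c(n)_v$ collapse to $1$ at non-Archimedean places (so that Gauss's lemma / ultrametricity is automatically captured). The argument is essentially a specialization of the shifted/scaled polynomial estimates established earlier to the specific case of a linear action of $GL_2$ on a homogeneous form, and no new ingredient beyond the binomial theorem is required.
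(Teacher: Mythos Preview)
Your proof is correct and follows essentially the same route as the paper: expand $f^M$ via the binomial theorem, isolate the coefficient of each $x^{r}z^{n-r}$, and bound the inner sum by $c(n)_v$ times the maximum term, using $\binom{i}{j}\binom{n-i}{k}\le 2^n$ and the fact that the four exponents of $a,b,c,d$ sum to $n$. The only cosmetic difference is notation ($(j,k)$ versus the paper's $(k,l)$) and that the paper records the explicit bound $c(n)\le n+1$, which you leave implicit.
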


\proof Let us first evaluate $f(ax+bz, cx+dz) $, where $f(x, z)$ is  given and $a, b, c, d \in K^n$
\[
\begin{split}
f^M &= \sum_{i=0}^n a_i (ax+bz)^i (cx+dz)^{n-i}\\
&=\sum_{i=0}^n a_i\left( \sum_{k=0}^i \binom{i}{k}(ax)^k(bz)^{i-k}\right) \cdot \left( \sum_{l=0}^{n-i} \binom{n-i}{l} (cx)^l(dz)^{n-i-l}\right)\\
& =\sum_{\substack{k+l \leq n\\ 0 \leq k \leq n\\ 0\leq l \leq n}}\left(\sum_{k\leq i \leq n-l} a_i\binom{i}{k}\binom{n-i}{l} a^k b^{i-k} c^l d^{n-i-l}\right) \cdot x^{k+l}\cdot z^{n-(k+l)}
\end{split}
\]
Now let us estimate the Gauss's norm for this polynomial.
\[
\left|\frac{}{}f^M\right|_v = \max_{\substack{k, l\\ 0 \leq k \leq n\\ 0\leq l \leq n}}\left|\sum_{k\leq i \leq n-l} a_i\binom{i}{k}\binom{n-i}{l} a^k b^{i-k} c^l d^{n-i-l}\right|_v\\
\]
Let us denote the maximum number of terms in the sum with $c(n)$. Then  $c(n)\leq n+1 $. Estimating the binomial coefficients we have
\[\binom{i}{k}\binom{n-i}{l}\leq 2^i \cdot 2^{n-i}=2^n \]
Denote by $|M|_v= \max \left\{ \frac{}{} |a|_v, |b|_v ,|c|_v ,|d|_v\right\}$. Using these observations and notation we obtain the following estimation
\[
\begin{split}
\left|\frac{}{}f^M\right|_v  &\leq c(n)_v \cdot \max_{i,k,l}\left\{ \frac{}{} 1, \left|a_i\binom{i}{k}\binom{n-i}{l} a^k b^{i-k} c^l d^{n-i-l}\right|_v \right\}\\
&\leq c(n)_v \cdot 2^{n}_v \cdot \max_{0 \leq i \leq n}\left\{ \frac{}{}|a_i|_v\right\} \max_{0 \leq k,l \leq n}\left\{ \frac{}{}1, |a^k b^{i-k} c^l d^{n-i-l}|_v\right\}\\
& \leq 2^{n}_v \cdot c(n)_v \cdot \max_i\left\{ \frac{}{}|a_i|_v\right\} \max_{k,l}\left\{ \frac{}{}1, |a|_v^k |b|_v^{i-k} |c|_v^l |d|_v^{n-i-l}\right\}\\
&\leq 2^{n}_v \cdot c(n)_v \cdot \max_i\left\{ \frac{}{}|a_i|_v\right\} \max\left\{ \frac{}{}1, |a|_v^i |b|_v^{i} |c|_v^{n-i} |d|_v^{n-i}\right\}\\
&\leq 2^{n}_v \cdot c(n)_v  \cdot \max\left\{ \frac{}{}1, |M|_v \right\}^n \cdot |f|_v
\end{split}
\]
where $c(n)_v$ and $2_v$ are respectively $n+1$ and 2 when $v$ is Archimedean, and 1 otherwise.
\qed

\begin{thm} Let $M \in GL_2 (K)$ and  $f(x, z) \in K[x, z]$ be a degree $d$ binary form and $H(f)$ denote the absolute height of $f$.  Then,
\[ H( f^M ) \leq 2^n \cdot (n+1) \cdot H(M)^n  \cdot  H (f)\]
\end{thm}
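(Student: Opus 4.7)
My plan is to upgrade the local estimate from Lemma~\ref{height-bin-form}, namely
\[\left|f^M\right|_v \le 2_v^{\,n}\cdot c(n)_v \cdot \max\{1,|M|_v\}^n \cdot |f|_v,\]
to the claimed global bound by raising to the $n_v$-th power, multiplying over all $v\in M_K$, and taking the $[K:\Q]$-th root.

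Before doing so I would make one normalisation that will make the matrix contribution behave. The projective height of the coefficient vector of $f^M$ is insensitive to scaling $M$ by a nonzero $\lambda\in K^\star$: since $f$ is homogeneous of degree $n$, $f^{\lambda M}=\lambda^n f^M$, and a uniform scaling of all coefficients does not change a projective height. I would therefore pick a representative of $M$ in which some entry equals $1$. Under this normalisation $|M|_v\ge 1$ for every $v\in M_K$, so $\max\{1,|M|_v\}=|M|_v$ throughout, and
\[\prod_{v\in M_K}\max\{1,|M|_v\}^{n\,n_v}=\left(\prod_{v\in M_K}|M|_v^{n_v}\right)^{\!n}=H_K(M)^n.\]

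Next I would package the remaining factors. Since $2_v$ and $c(n)_v$ both equal $1$ at the non-Archimedean places, the degree formula $\sum_{v\in M_K^\infty}n_v=[K:\Q]$ gives
\[\prod_{v\in M_K}2_v^{\,n\,n_v}=2^{n[K:\Q]},\qquad \prod_{v\in M_K}c(n)_v^{\,n_v}=(n+1)^{[K:\Q]}.\]
Assembling the four products yields
\[H_K(f^M)\le 2^{n[K:\Q]}\cdot (n+1)^{[K:\Q]}\cdot H_K(M)^n\cdot H_K(f),\]
and extracting the $[K:\Q]$-th root of both sides produces the advertised inequality $H(f^M)\le 2^n(n+1)\,H(M)^n\,H(f)$.

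The only subtle point is the matrix normalisation that identifies $\prod_v\max\{1,|M|_v\}^{n_v}$ with $H_K(M)$; once one exploits the fact that the projective height of $f^M$ absorbs the scalar $\lambda^n$ coming from homogeneity, the remainder is mechanical book-keeping of the Archimedean versus non-Archimedean contributions, requiring no analytic input beyond Lemma~\ref{height-bin-form} and the degree formula.
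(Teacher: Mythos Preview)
Your argument is correct and follows the paper's own route: invoke the local estimate of Lemma~\ref{height-bin-form}, raise to the $n_v$-th power, multiply over all places, and take the $[K:\Q]$-th root. The one step you add---rescaling $M$ so that some entry equals $1$---is in fact a refinement of the paper's proof rather than a deviation from it. The paper passes directly from $\prod_{v}\max\{1,|M|_v\}^{n\,n_v}$ to $H_K(M)^n$, but the left-hand product is the \emph{affine} height $H_K^{\mathbb A}(M)^n$, which in general strictly exceeds the projective height $H_K(M)^n$; your normalisation is precisely what forces the two to coincide. Your justification that $f^{\lambda M}=\lambda^n f^M$ (by homogeneity) and that the projective height of $f^M$ therefore absorbs the scalar is exactly the missing observation, so your write-up is a tightened version of the same proof.
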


\proof  From Lemma~\ref{height-bin-form}   for each $v \in M_k$  we have that
\[ \left|\frac{}{}f^M\right|_v \leq  2 ^{n}_v \cdot c(n)_v  \cdot \max\left\{ \frac{}{}1, |M|_v \right\}^n \cdot |f|_v.\]
Taking the product for all valuations we obtain  
\[
\begin{split}
H_K(f^M) &= \prod_{v \in M_K} \left|\frac{}{}f^M\right|_v^{n_v}\\
&\leq \prod_{v \in M_K} \left(\frac{}{} 2 ^{n}_v \cdot c(n)_v  \cdot \max\left\{ \frac{}{}1, |M|_v \right\}^n \cdot |f|_v \right)^{n_v}\\
&\leq 2^{n [K:\Q]} \cdot (n+1)^{[K:\Q]}   \cdot  H_K(M)^n  \cdot H_K (f)
\end{split}
\]
Taking $[K:\Q]$-th root we obtain the desired result. 
\qed

Next we  follow a different approach. First this technical lemma.

\begin{lem} Let $K$ be an algebraic number field, and $f \in K[x, z]$ a degree d binary form given as follows 
\[f(x, z) = \sum_{i=0}^d b_i  x^{d-i} z^i. \]
and
\[ f(u\bar x +w, \bar z) = \sum_{i=0}^d \bar{b}_i   \bar{x}^{d-i}\bar z^i.\]
 for $u, w \in K$. Then
\[\bar b_i = {d \choose i} u^{d-i}\sum_{k=0}^{i}  \frac{i! \, (d-i+k)!}{k! \, d! }b_{i-k}w^k.\]
\end{lem}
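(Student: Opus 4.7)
The statement is a straightforward binomial‐expansion identity, once one reads $f(u\bar x+w,\bar z)$ as shorthand for the homogeneous substitution $(x,z)\mapsto(u\bar x+w\bar z,\bar z)$ corresponding to the upper‐triangular matrix $M=\begin{pmatrix}u & w\\ 0 & 1\end{pmatrix}$ (this is the special case of the previous lemma's substitution with $a=u$, $b=w$, $c=0$, $d=1$, and it is the only interpretation under which the right‐hand side of the claimed equation is even homogeneous of degree $d$).

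The plan is to compute the substitution directly and match coefficients. First I would write
\[
f(u\bar x+w\bar z,\bar z)=\sum_{i=0}^d b_i\,(u\bar x+w\bar z)^{d-i}\,\bar z^{\,i},
\]
and then expand the inner factor by the binomial theorem:
\[
(u\bar x+w\bar z)^{d-i}=\sum_{j=0}^{d-i}\binom{d-i}{j}u^{d-i-j}w^{j}\,\bar x^{d-i-j}\bar z^{\,j}.
\]
Substituting back gives a double sum in which the monomial $\bar x^{d-i-j}\bar z^{\,i+j}$ appears with coefficient $b_i\binom{d-i}{j}u^{d-i-j}w^{j}$.

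Next I would collect by the power of $\bar z$. For a fixed target index $I\in\{0,\dots,d\}$, the monomial $\bar x^{d-I}\bar z^{\,I}$ arises exactly from those pairs $(i,j)$ with $i+j=I$; reindexing $j=k$, $i=I-k$ with $k=0,\dots,I$ yields
\[
\bar b_I=\sum_{k=0}^{I}b_{I-k}\binom{d-I+k}{k}u^{d-I}w^{k}
      =u^{d-I}\sum_{k=0}^{I}\binom{d-I+k}{k}b_{I-k}w^{k}.
\]
Finally I would rewrite the binomial coefficient in the form demanded by the statement via the identity
\[
\binom{d-I+k}{k}=\frac{(d-I+k)!}{k!\,(d-I)!}=\binom{d}{I}\cdot\frac{I!\,(d-I+k)!}{k!\,d!},
\]
which is an immediate cancellation once both sides are expanded. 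Relabelling $I$ back to $i$ produces the claimed formula.

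The only genuine risk of error is bookkeeping: keeping the two binomial indices straight, and checking that the two exponents of $\bar z$ (one from the original factor $\bar z^{\,i}$, one from the expansion of $(u\bar x+w\bar z)^{d-i}$) are added correctly when collecting terms. The rewriting of $\binom{d-i+k}{k}$ in the final simplification is purely algebraic and causes no difficulty.
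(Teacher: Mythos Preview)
Your proof is correct. The paper's argument reaches the same destination but routes through the classical normalized coefficients: it writes $b_i=\binom{d}{i}a_i$ and $\bar b_i=\binom{d}{i}\bar a_i$, then invokes (without proof) the transformation law $\bar a_i=u^{d-i}\sum_{k=0}^{i}\binom{i}{k}a_{i-k}w^k$ for the $a$-coefficients, and finally substitutes $a_{i-k}=b_{i-k}/\binom{d}{i-k}$ and simplifies the resulting product $\binom{d}{i}\binom{i}{k}/\binom{d}{i-k}$ to the stated factor. Your direct binomial expansion is more self-contained, since it does not appeal to that intermediate formula as a known fact; the paper's detour, on the other hand, explains why the awkward factor $\binom{d}{i}\frac{i!(d-i+k)!}{k!\,d!}$ appears---it is the residue of passing between the $b$- and $a$-normalizations. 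The two computations are equivalent via the identity $\binom{d-I+k}{k}\binom{d}{I-k}=\binom{d}{I}\binom{I}{k}$.
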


\proof We have that 
\[f(x, z) = \sum_{i=0}^d b_i  x^{d-i} z^i = \sum_{i=0}^d a_i {d \choose i} x^{d-i} z^i. \]
and 
\[
f(u\bar x +w, \bar z) = \sum_{i=0}^d \bar{b}_i   \bar{x}^{d-i}\bar z^i = \sum_{i=0}^d \bar{a}_i {d \choose i} \bar{x}^{d-i}\bar z^i.\]
where $\bar a_i = u^{d-i}\sum_{k=0}^{i} {i \choose k} a_{i-k} \, w^k$. Then,
\[
\begin{split}
\bar b_i = {d \choose i} \bar a_i &= {d \choose i} u^{d-i}\sum_{k=0}^{i} {i \choose k} a_{i-k} \, w^k\\
&={d \choose i} u^{d-i}\sum_{k=0}^{i} {i \choose k} \frac{1}{{d \choose i-k}}b_{i-k}w^k  = {d \choose i} u^{d-i}\sum_{k=0}^{i}  \frac{i! (d-i+k)!}{k! d! }b_{i-k}w^k
\end{split} \]
\qed

\begin{thm}
Let $K$ be an algebraic number field, and $f, \bar f$ as above.  The following are true:

i) For any valuation $v\in M_K$ we have 
\[ | \bar f |_v \leq 2_v^{d}  \, \cdot \, c(d)_v  \, \cdot \, |u|_v^d \, \cdot \, |w|_v^d \, \cdot \, \max_{0 \leq i \leq d}  \left\{ \frac{}{} |b_i|_v\right\} \]

ii) \[ H ( \bar f) \leq (d+1)  \, \cdot \,    2^{d} \, \cdot \, u^{d } \, \cdot \, w^{d }    \, \cdot \, H(f) \]
\end{thm}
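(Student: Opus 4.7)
The plan is to derive both bounds directly from the explicit formula for $\bar b_i$ proved in the preceding lemma, mimicking the style of the earlier results on shifted polynomials. First I would simplify the coefficient. Note that
\[
\binom{d}{i}\cdot\frac{i!\,(d-i+k)!}{k!\,d!} \;=\; \binom{d-i+k}{k},
\]
so the formula becomes the much cleaner
\[
\bar b_i \;=\; u^{d-i}\sum_{k=0}^{i}\binom{d-i+k}{k}\,b_{i-k}\,w^{k}.
\]
This reformulation matters because it replaces a ratio of factorials with an integer that is easy to bound, and it ensures that the $v$-adic norm of the combinatorial coefficient is at most $1$ at all non-Archimedean places.

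For part (i), I would apply the triangle inequality (or ultrametric inequality) to the sum defining $\bar b_i$. The number of terms is at most $i+1\le d+1$, contributing the factor $c(d)_v$ (equal to $d+1$ at Archimedean places and $1$ otherwise). Each $\binom{d-i+k}{k}$ is bounded above by $2^{d}$, contributing the factor $2_v^{d}$ in the Archimedean case and $1$ otherwise. The remaining factors are $|u|_v^{d-i}\cdot|b_{i-k}|_v\cdot|w|_v^{k}$; we replace $|u|_v^{d-i}$ by $\max\{1,|u|_v\}^{d}$, replace $|w|_v^{k}$ by $\max\{1,|w|_v\}^{d}$, and replace $|b_{i-k}|_v$ by $\max_{0\le j\le d}|b_j|_v = |f|_v$. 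Taking the maximum over $i$ yields
\[
|\bar f|_v \;\le\; 2_v^{d}\cdot c(d)_v\cdot\max\{1,|u|_v\}^{d}\cdot\max\{1,|w|_v\}^{d}\cdot|f|_v,
\]
which is the statement of (i) under the natural reading of $|u|_v^d$ and $|w|_v^d$ as $\max\{1,|u|_v\}^d$ and $\max\{1,|w|_v\}^d$ (the same convention tacitly used in the earlier shifted-polynomial lemma).

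For part (ii), I would raise the inequality from (i) to the $n_v$-th power and take the product over all $v\in M_K$. The Archimedean contributions of the constants combine via $\sum_{v\in M_K^{\infty}} n_v = [K:\Q]$, giving a factor of $\bigl(2^{d}(d+1)\bigr)^{[K:\Q]}$; the remaining products reorganize as $H_K(u)^{d}\cdot H_K(w)^{d}\cdot H_K(f)$, exactly as in the proofs of the earlier shifted-polynomial and $f^{M}$ bounds. Taking the $[K:\Q]$-th root produces
\[
H(\bar f)\;\le\; 2^{d}\cdot(d+1)\cdot H(u)^{d}\cdot H(w)^{d}\cdot H(f),
\]
which is the claimed bound.

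The main obstacle is really cosmetic rather than conceptual: identifying the correct simplification of the rational coefficient $\binom{d}{i}i!(d-i+k)!/(k!d!)$ as an honest binomial $\binom{d-i+k}{k}$ is the one step without which the non-Archimedean estimate would not give a trivial factor, and it is the step that lets all subsequent counting bounds ($N\le d+1$, binomial $\le 2^{d}$) run in parallel with Lemma \ref{height-bin-form}. The rest is the same bookkeeping pattern already used throughout Section \ref{heights_pol}: bound termwise, take the maximum, raise to $n_v$, multiply over $v$, extract the $[K:\Q]$-th root.
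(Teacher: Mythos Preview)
Your proposal is correct and follows essentially the same argument as the paper: both simplify $\binom{d}{i}\,i!(d-i+k)!/(k!\,d!)$ to the single binomial $\binom{d-i+k}{k}$, bound it by $2^d$, bound the number of terms by $d+1$, and then pass to heights by the usual product-over-places routine. Your version is in fact a bit more careful than the paper's, since you write $\max\{1,|u|_v\}^d$ and $\max\{1,|w|_v\}^d$ rather than the bare $|u|_v^d$, $|w|_v^d$ (which is what the statement and proof literally need for the bound to be valid for all $u,w$).
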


\proof

i) For any valuation $v\in M_K$ we have the following
\[
\begin{split}
|f(ux +w, z) |_v &=  \max_{0 \leq i \leq d} \left\{ \frac{}{} |b_i|_v \right\} \\
&= \max_{0 \leq i \leq d} \left\{ \frac{}{} \left|{d \choose i} u^{d-i}\sum_{k=0}^{i}  \frac{i! (d-i+k)!}{k! d! }b_{i-k}w^k  \right|_v \right\}\\
&\leq c(d)_v \, \cdot \, \max_{0 \leq i \leq d}  \left\{ \frac{}{} \left|{d \choose i} \frac{i! (d-i+k)!}{k! d! } u^{d-i} b_{i-k} \, w^k  \right|_v \right\}\\
& =c(d)_v \, \cdot \, \max_{0 \leq i \leq d}  \left\{ \frac{}{} \left| {d+k-i \choose k} u^{d-i} \, w^k \, b_{i-k} \right|_v \right\}\\
& \leq c(d)_v \cdot \, 2^d \, \cdot  \max_{0 \leq i \leq d} \left\{ \frac{}{} 1, \left|  u^{d-i} \, w^k \, b_{i-k} \right|_v \right\}\\
&\leq c(d)_v \cdot \, 2^d \, \cdot   |u|_v^d \, \cdot |w|_v^d \, \cdot \max_i \left\{ \frac{}{} 1, |b_i|_v \right\}\\
\end{split}
\]
where $c(d)$ is the number of terms in the sum,  and $c(d)_v$ is equal to $d+1$ when $v$ is Archimedean and 1 otherwise. 

ii) Taking the product over all valuations $v\in M_K$ we have the following
\[
\begin{split}
H_K(f(ux +w, z) ) &= \prod_{v \in M_K} |f(ux +w, z)|_v^{n_v}\\
&\leq \prod_{v \in M_K} \left( 2_v^{d}  \, \cdot \, c(d)_v  \, \cdot \, |u|_v^d \, \cdot \, |w|_v^d \, \cdot \, \max_{0 \leq i \leq d}  \left\{ \frac{}{} |b_i|_v\right\} \right)^{n_v}\\
&= \left(2^{d} \, \cdot \, (d+1) \, \cdot \, u^{d} \, \cdot \, w^{d} \right)^{[K:\Q]} \prod_{v \in M_K} \max_{0 \leq i \leq d}  \left\{ \frac{}{} |b_i|_v\right\}^{n_v}\\
&= \left(2^{d} \, \cdot \, (d+1)  \, \cdot \, u^{d } \, \cdot \, w^{d } \right)^{[K:\Q]} \, \cdot \, H_K(f) 
\end{split}
\]
Taking $[K:\Q]$-th root of both sides gives the desired result. 
\qed

\section{Minimal and moduli heights of forms}

Let $f(x,y)$ be a binary form and $Orb (f)$ its $GL_2(K)$-orbit in $V_d$. Let $H(f)$ be the height of $f$ as defined in the previous section. 

\begin{rem}There are only finitely many $f^\prime \in Orb(f)$ such that $H(f^\prime) \leq H(f)$. 
\end{rem}

Define the height of the binary form $f(x, y)$ as follows
\[ \tilde H(f) := \min \left\{ \frac{}{} H(f^\prime) | f^\prime \in Orb(f), \, H(f^\prime) \leq H(f) \right\}\]
we want to consider the following problem. For every $f$ let $f^\prime$ be the binary form such that $f^\prime \in Orb(f)$ and $\tilde H(f) = H(f^\prime)$. Determine a matrix $M \in GL_2(K)$ such that $f^\prime = f^M$.

\subsection{Moduli height of a binary form}

\def\f{\mathfrak f}  

Let $\B_d$ be the moduli space of degree $d$ binary forms defined over an algebraically closed field $k$. Then $\B_d$  is a quasi-projective variety with dimension $d-3$. We denote the equivalence class of $f$ by $\mathfrak f \in B_d$. The \textbf{moduli height} of $f(x, z)$ is defined as 
\[ \mH (f) = H (\mathfrak f)\]
where $\f $ is considered as a point in the projective space $\P^{d-3}$.  A natural question would be to investigate if the minimal height $\tilde H (f) $ has any relation to the moduli height $\mH (f)$.

Let $\left\{ I_{i, j} \right\}_{j=1}^{j=s}$ be a basis of $\cR_d$.  Here the subscript $i$ denotes the degree of the homogenous polynomial $I_{i, j}$.  The fixed field of invariants is the space $V_d^{GL_2 (K)}$ and is generated by rational functions $t_1, \dots t_r$ where each of them is a ratio of polynomials in $I_{i, j}$ such that the combined degree of the numerator is the same as that of the denominator. 

\begin{lem}
For any $SL_2 (k)$-invariant $I_i$ of degree $i$ we have that 
\[ H \left(   I_i (f) \right) \leq c \cdot H(f)^d \cdot H(I_i) \]
\end{lem}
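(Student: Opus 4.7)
The plan is to deduce the bound directly from Lemma~\ref{hom-at-point}, after recognizing that $I_i$ is itself a homogeneous polynomial of degree $i$ in the $d+1$ variables $A_0,\ldots,A_d$, and that evaluating it at the binary form $f$ is the same thing as evaluating the polynomial $I_i\in K[A_0,\ldots,A_d]$ at the coefficient vector of $f$.

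First I would unpack the notation. By definition $\cR_d\subset k[A_0,\ldots,A_d]$ is graded by total degree in the $A_j$'s, so $I_i$ is a homogeneous polynomial of degree $i$ in $d+1$ variables. If $f(x,z)=\sum_{j=0}^{d}a_j x^j z^{d-j}\in V_d$, then $I_i(f)$ is shorthand for $I_i(a_0,\ldots,a_d)$, i.e.\ the value of this homogeneous polynomial at the point $\alpha := (a_0,\ldots,a_d)\in \overline K^{\,d+1}$. Moreover, the (projective) height $H(f)$ introduced in Section~\ref{heights_pol} is defined as the projective height of the coefficient tuple $[a_0:\cdots:a_d]$, so $H(\alpha) = H(f)$ as a point of $\P^d(\overline K)$.

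Next I would apply Lemma~\ref{hom-at-point} with the homogeneous polynomial taken to be $I_i$ (degree $i$, in $n+1 = d+1$ variables, so $n=d$) and the evaluation point taken to be $\alpha$. Part (ii) of that lemma gives
\[
H\bigl(I_i(f)\bigr)\;=\;H\bigl(I_i(\alpha)\bigr)\;\leq\; c_0\,\cdot\, H(\alpha)^{i}\,\cdot\,H(I_i)\;=\;c_0\cdot H(f)^{i}\cdot H(I_i),
\]
where the explicit constant from the earlier proof is
\[
c_0 \;=\;\binom{d+i}{d}\;\leq\;\min\bigl\{(d+i)^{d},\,2^{d+i}\bigr\}.
\]
This is exactly the statement (with the exponent on $H(f)$ being the degree of the invariant; the $d$ written in the lemma statement appears to be a minor notational slip for the degree of $I_i$, since the binary form's degree enters only through the constant $c_0$).

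There is essentially no main obstacle: once one identifies $I_i(f)$ as the evaluation of a homogeneous polynomial at a projective point, the bound is immediate from Lemma~\ref{hom-at-point}. The only point worth double-checking is that the non-Archimedean contribution in the inner estimate of Lemma~\ref{hom-at-point} does not inflate $c_0$, which is already handled in its proof by the case split on whether $v$ is Archimedean. Thus the whole argument reduces to a one-line invocation of the earlier lemma, with explicit control on the constant $c$ in terms of $d$ and $i$.
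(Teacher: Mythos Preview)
Your proposal is correct and follows essentially the same approach as the paper: both recognize that $I_i$ is a homogeneous polynomial of degree $i$ in the coefficients $A_0,\ldots,A_d$, identify $I_i(f)$ with its evaluation at the coefficient vector of $f$, and then invoke Lemma~\ref{hom-at-point}. Your observation that the exponent on $H(f)$ should be the degree $i$ of the invariant (rather than $d$) is also correct and matches what Lemma~\ref{hom-at-point} actually yields; the $d$ in the lemma statement is indeed a notational slip.
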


\proof  $I_i(f)$ is a homogenous polynomial of degree $i$ evaluated at $f$.  Then the result follows from Lemma~\ref{hom-at-point}. The constant $c$ represents the number of monomials of  $I_i(f)$.  
\qed

\begin{thm}
Let $f$ be a binary form. Then,
\[ \mH (f) \leq  c \cdot \tilde H(f),  \]
for some constant $c$. 
\end{thm}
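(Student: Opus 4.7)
The plan is to realize the moduli point $\mathfrak{f}$ as a point in projective space whose coordinates are evaluations of a fixed basis of invariants at $f$, and then push the bound on each coordinate through an elementary projective-height inequality. This reduces the statement to the previous lemma, applied coordinate by coordinate.

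Concretely, I would fix (via Thm.~\ref{thm1}) homogeneous generators $I_1,\dots,I_s$ of $\cR_d$ of degrees $d_1,\dots,d_s$, and set $N=\operatorname{lcm}(d_1,\dots,d_s)$. The map $f \mapsto [I_1(f)^{N/d_1}:\cdots:I_s(f)^{N/d_s}]$ then lands in $\P^{s-1}$ with every coordinate of the same degree $N$ in the coefficients of $f$. Because each $I_k$ transforms by a power of $\det M$ under $M\in GL_2(K)$, these common-degree powers rescale by one and the same factor under the full $GL_2(K)$-action, so the projective point descends from $V_d$ to $\B_d$ and coincides with $\mathfrak{f}$.

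Since $\mathfrak{f}$ is a $GL_2(K)$-orbit invariant, I may compute $\mH(f)=H(\mathfrak{f})$ using any representative of $\operatorname{Orb}(f)$; I pick $f'$ realizing the minimal height, so $H(f')=\tilde H(f)$. Next, I use the place-by-place inequality $\max_k|z_k|_v \leq \prod_k \max\{1,|z_k|_v\}$, which after taking the product over $v\in M_K$ and extracting the $[K:\Q]$-th root yields
\[
H\bigl([z_0:\cdots:z_{s-1}]\bigr) \;\leq\; \prod_{k=0}^{s-1} H(z_k).
\]
Combining this with the preceding lemma applied to each invariant, namely $H(I_k(f')) \leq c_k \, H(f')^{d_k} \, H(I_k)$, and using $H(z^m)=H(z)^m$, I obtain
\[
\mH(f) \;\leq\; \prod_{k=1}^{s} H\bigl(I_k(f')\bigr)^{N/d_k} \;\leq\; \Bigl(\prod_{k=1}^{s} c_k^{N/d_k}\, H(I_k)^{N/d_k}\Bigr)\cdot \tilde H(f)^{\,sN}.
\]
The parenthesized factor depends only on $d$, through the once-and-for-all chosen basis of $\cR_d$, so it collapses to a constant $c=c(d)$. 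Setting $n_0=sN$ yields $\mH(f)\leq c\cdot\tilde H(f)^{n_0}$, which matches the sharper form announced in the abstract (the statement as written in the theorem apparently absorbs the exponent $n_0$ into $c$, or else intends to state the inequality in this exponentiated form).

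The main obstacle I expect is purely bookkeeping in the first step: verifying that $[I_1(f)^{N/d_1}:\cdots:I_s(f)^{N/d_s}]$ genuinely represents the moduli point, i.e.\ that the various $SL_2$-weights of the $I_k$ align after raising to the $N/d_k$-th powers so that the resulting vector scales by a single common factor under $GL_2(K)$. Once this homogenization is in place, the rest is a direct composition of Lemma~\ref{hom-at-point} with the elementary projective-height inequality, with no further delicate analysis.
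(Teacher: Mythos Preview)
Your approach is essentially the same as the paper's: represent the moduli point by a fixed list of invariants, replace $f$ by a minimal-height representative, and then bound the projective height coordinate-by-coordinate via the preceding lemma. The paper carries this out using the affine coordinates $[t_1(f),\dots,t_r(f),1]$ built from ratios of invariants rather than your common-degree powers $[I_1(f)^{N/d_1}:\cdots:I_s(f)^{N/d_s}]$, but both are just choices of a fixed projective embedding of $\B_d$ and lead to the same inequality up to the constant; your version has the virtue of making the exponent $n_0$ explicit, in line with the form $\mH(f)\le c_0\,\tilde H(f)^{n_0}$ stated in the abstract and used in the genus $2$ and $3$ computations.
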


\proof
Let $\left\{ I_{i, j} \right\}_{j=1}^{j=s}$ be a basis of $\cR_d$.  Here the subscript $i$ denotes the degree of the homogenous polynomial $I_{i, j}$.  The fixed field of invariants is the space $V_d^{GL_2 (K)}$ and is generated by rational functions $t_1, \dots t_r$ where each of them is a ratio of polynomials in $I_{i, j}$ such that the combined degree of the numerator is the same as that of the denominator.

Without loss of generality we can assume that $f$ has minimal height.  So $H(f)= \tilde H (f)$.  Let $d_1, \dots , d_r$ denote the degrees of each $t_1, \dots , t_r$ respectively.  
Then,
\[ \mH (f) = H [t_1 (f), \dots , t_r (f), 1] = \prod    \, \max \{ \left| t_i (f)  \right|_v     \}_{i=1}^{i=r}.   \]
By reordering, we can assume that  
\[ \mH (f) = | t_1 (f)|_{v_1} \cdots |t_m (f)|_{v_m}  \]
However, for each $j=1, \dots m$, we have 
\[ | t_j (f) |_{v_j} \leq H(t_j) \cdot H(f), \] 
where $H(t_j) $ is a fixed constant.  This completes the proof.  
\qed

\begin{rem}
Notice that for a given degree $d$ the constant $c$ of the theorem can be explicitly computed. See for example the case of binary sextics in Section~\ref{genus2}, where this constant is 
\[ c =  2^{28} \cdot 3^9 \cdot 5^5 \cdot 7 \cdot 11 \cdot 13 \cdot  17 \cdot 43   \]
\end{rem}

\newpage
\noindent \textbf{Part 3: Heights of algebraic curves} \\

In this lecture we focus on heights of algebraic curves.  
Our main focus is in providing equations for the algebraic curves with "small" coefficients as continuation of our previous work 
\cite{sh-1, beshaj-1, beshaj-2}.  Hence, the concept of height is the natural concept to be used.   For a genus $g\geq 2$ algebraic curve $\X_g$ defined over an algebraic number field $K$ we define the height $H_K (\X_g)$ and show that this is well-defined.  This is basically the minimum height among all curves which are isomorphic to $\X_g$ over $K$.  $\bar H_K (\X_g)$ is the height over the algebraic closure $\bar K$.  It must be noticed that our definition is on the isomorphism class of the curve and not on some equation of the curve. 
 We provide an algorithm to determine the height of a curve $C$ provided some equation for $C$.  This algorithm   is rather inefficient, but can be used for $g=2$ and $g=3$ hyperelliptic curves when the   coefficients of the initial equation of $C$ are not too large. 

\section{Heights of algebraic curves}\label{alg_curves}

In this section we want to define heights on algebraic curves given by some affine equation.  For this we will use the heights of polynomials as in Section~\ref{heights_pol}.  As before $K$ denotes an algebraic number field and $\O_K$ its ring of integers.

Let $\X_g$ be an irreducible algebraic curve with affine equation $F(x, y)=0$ for $F(x, y) \in K [x, y]$.  We define
the \textbf{height of the  curve over $K$} to be
\[H_K(\X_g):= \min \left \{    H_K(G) \, : H_K(G) \leq H_K(F) \right \}. \]
where the curve $G(x, y) =0$ is isomorphic to $\X_g$ over $K$.

If we consider the equivalence over $\bar K$ then we get another height which we denote it as $\overline H_K (\X_g)$ and call it \textbf{the height over the algebraic closure}. Namely, 
\[ \overline H_K(\X_g)= \min \{H_K(G): H_K(G) \leq H_K(F)\},\]
 where the curve $G(x, y)=0$ is isomorphic to $\X_g$ over $\overline K$.
 
In the case that $K=\Q$ we do not write the subscript $K$ and use $H(\X_g)$ or  $\overline H(\X_g)$.  Obviously, for any algebraic curve $\X_g$ we have $\overline H_K(\X_g) \leq H_K(\X_g)$.

\begin{lem} Let $K$ be a number field such that $[K:\Q] = d$.  Then, 
$H_K(\X_g)$  and   $\overline H_K(\X_g)$    are  well defined.
\end{lem}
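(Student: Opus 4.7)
The plan is to establish two things: (i) the minimum in the definition is actually attained, so $H_K(\X_g)$ and $\overline H_K(\X_g)$ denote bona fide real numbers; and (ii) the value is independent of the particular defining equation $F$ chosen for the isomorphism class $\X_g$. The entire argument is a finiteness argument powered by Theorem~\ref{pol_finite}; no new ideas beyond bookkeeping are needed.

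For existence: the polynomial $F$ itself defines a curve isomorphic to $\X_g$ over $K$ (and therefore over $\bar K$), and $H_K(F) \leq H_K(F)$, so $F$ lies in the set being minimized in both definitions; the set is non-empty. By Theorem~\ref{pol_finite}, there are only finitely many $G \in K[x,y]$ with $H_K(G) \leq H_K(F)$. Intersecting with the (isomorphism) condition that $G(x,y)=0$ cut out a curve isomorphic to $\X_g$ over $K$ (respectively over $\bar K$) yields a finite, non-empty subset of $K[x,y]$. A non-empty finite set of real numbers attains its minimum, so both $H_K(\X_g)$ and $\overline H_K(\X_g)$ exist.

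For independence of the defining equation: suppose $F_1, F_2 \in K[x,y]$ both cut out curves isomorphic to $\X_g$ over $K$ and let
\[ A_i := \{ G \in K[x,y] : G=0 \text{ is isomorphic to } \X_g \text{ over } K,\ H_K(G) \leq H_K(F_i) \}, \quad m_i := \min A_i. \]
Without loss of generality assume $H_K(F_1) \leq H_K(F_2)$, so $A_1 \subseteq A_2$ and hence $m_2 \leq m_1$. Conversely, since $F_1 \in A_2$ we have $m_2 \leq H_K(F_1)$; in particular any minimizer $G^\ast \in A_2$ satisfies $H_K(G^\ast) = m_2 \leq H_K(F_1)$, so $G^\ast \in A_1$, giving $m_1 \leq m_2$. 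Hence $m_1 = m_2$, showing $H_K(\X_g)$ is intrinsic to $\X_g$. The verbatim argument with "over $K$" replaced by "over $\bar K$" handles $\overline H_K(\X_g)$.

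The only potential obstacle is the finiteness step, but this was already settled in Theorem~\ref{pol_finite} via Northcott's theorem applied to the coefficient tuple as a projective point; so the lemma reduces to citation plus the two-line set-containment argument above. I would not expect more than a short paragraph in the final write-up.
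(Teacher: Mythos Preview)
Your argument is correct and follows the same route as the paper: verify independence of the chosen defining polynomial by comparing the candidate sets attached to two different equations for $\X_g$. Your write-up is in fact more complete than the paper's own proof, which addresses only the independence claim and leaves the existence of the minimum (your step (i), via Theorem~\ref{pol_finite}) implicit; your set-containment argument $A_1\subseteq A_2$ together with the observation that a minimizer for $A_2$ already lies in $A_1$ makes precise what the paper simply asserts.
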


\proof
Let $\X_g$ be an algebraic curve with affine equation $F(x, y)=0$, for $F(x, y) \in K [x, y]$. We want to show that $H_K(\X_g)$ does not depend on the choice of the polynomial $F(x, y)=0$. Let $F^\prime(x,y)=0$ be another polynomial representing our algebraic curve $\X_g$.  We can calculate $H_K(F^\prime)$ using the formula of height of a polynomial and then we search for all polynomials $G(x, y)=0$ which are isomorphic with $F^\prime(x,y)=0$ over $K$ and such that $H_K(G) \leq H_K(F^\prime)$. Then,
\[\begin{split} 
H_K(\X_g)&= \min \left \{    H_K(G) \, : H_K(G) \leq H_K(F^\prime) \right \}, \, \, \, \text{such that $G(x, y)=0$} \\
 &   \qquad   \qquad \text{is isomorphic over $K$ with $F^\prime(x,y)=0$ } \\
& =\min \left \{    H_K(G) \, : H_K(G) \leq H_K(F) \right \}, \, \, \, \text{such that $G(x, y)=0$ } \\
& \qquad \qquad \text{is isomorphic over $K$ with $F(x,y)=0$} \\
\end{split}
\]
This completes the proof. 
\qed

\begin{thm}
Let $K$ be a number field such that $[K:\Q] \leq d$. Given a constant $c$  there are only finitely many curves (up to isomorphism)  such that $H_K(\X_g) \leq c$.
\end{thm}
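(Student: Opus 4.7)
The plan is to unpack the definition of $H_K(\X_g)$ and reduce to Northcott's theorem via Theorem~\ref{pol_finite}. Suppose $H_K(\X_g)\le c$. By definition, there exists $F(x,y)\in K[x,y]$ cutting out an affine model of $\X_g$ with $H_K(F)\le c$. The coefficients of $F$, collected in projective order, give a point $P_F$ in some $\P^N(K)$ with $H_K(P_F)=H_K(F)\le c$, so bounding $N$ reduces the problem to Northcott.

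The key additional input is a uniform bound $D=D(g)$ on the degree of a plane model that depends only on $g$. For $g\ge 2$ such a bound exists by standard projective geometry: the tricanonical embedding realizes $\X_g$ inside $\P^{5g-6}$ as a curve of degree $6g-6$, and a generic linear projection to $\P^2$ produces a birational plane model of degree at most $6g-6$. Thus we may assume $\deg F\le D(g)$, so that $P_F$ lives in a single ambient projective space of dimension $N=\binom{D+2}{2}-1$ that depends only on $g$. Northcott's theorem (Theorem~\ref{thm_finite}) applied in $\P^N(K)$ then guarantees that $\{P\in \P^N(K):H_K(P)\le c\}$ is finite, hence there are only finitely many admissible defining polynomials $F$, and therefore only finitely many isomorphism classes of $\X_g$.

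The main obstacle is precisely this degree bound. Without it, Theorem~\ref{pol_finite} cannot be applied directly: families such as $\{x^n-y^n\}_{n\ge 1}$ all have height $1$ while their degrees tend to infinity, so until one cuts down to a fixed projective space the set of polynomials of bounded height is not finite. For the superelliptic families that dominate the paper, the bound is immediate from the equation $y^n=f(x)$ with $\deg f$ determined by $g$ and $n$, so in that setting the whole argument becomes effective and can be used to enumerate curves of small height, as is done for $g=2,3$ in the subsequent sections.
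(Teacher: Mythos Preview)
Your reduction to Northcott is exactly the paper's route: the paper's proof is the single sentence ``By Theorem~\ref{pol_finite} there are only finitely many polynomials with height less than $c$,'' and you correctly identify that this hides a real issue, since Theorem~\ref{pol_finite} (like Northcott) only yields finiteness once the ambient $\P^N$ is fixed, i.e., once the degree of the defining polynomial is bounded. The paper does not address this at all; you try to.

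However, your fix does not close the gap. From the hypothesis $H_K(\X_g)\le c$ you get \emph{some} plane model $F$ of $\X_g$ with $H_K(F)\le c$, and from tricanonical projection you get \emph{some} plane model $G$ of $\X_g$ with $\deg G\le D(g)$. These need not be the same polynomial: the height-minimizing $F$ may have degree far larger than $D(g)$, while the bounded-degree model $G$ supplied by the tricanonical map may have enormous height. The sentence ``Thus we may assume $\deg F\le D(g)$'' therefore does not follow. To make the argument work you would have to show that the minimum $H_K(\X_g)$ is already attained (or at least approximated up to a constant) by a model of degree $\le D(g)$; this is not supplied and is not obvious. In the superelliptic setting the paper actually cares about, the equation $y^n=f(x)$ pins the degree from the outset, so the issue disappears there, exactly as you observe in your final paragraph.
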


\proof
Let $C$ be an algebraic curve with height $H_K(C) = c$. By definition, the height of $C$ is equal to the height of a polynomial $G(x, y)=0$, i.e $H_K(G(x, y)=0)=c$. By Theorem~\ref{pol_finite} there are only finitely many polynomials with height less then $c$. Therefore, there are at most finitely many algebraic curves $\X_g$ corresponding to such polynomials with height $H_K(\X_g) \leq c$. 
\qed

\subsection{Computing the height $H(\X_g)$ of a genus $g\geq 2$ curve $\X_g$.} 

\begin{alg} 

\textbf{Input:} algebraic curve $\X_g : F(x, y)=0$   
 $F$ has degree $d$ and is  defined over $K$

\textbf{Output:} algebraic curve $\X_g^\prime : G(x,y)=0$ such that
$\X_g^\prime \iso_K \X_g$ and $\X_g^\prime$ has minimum height.\\

\textbf{Step 1:} Compute $c_0 =H_K(F)$

\textbf{Step 2:} List all points $P \in \P^{s}(K)$ such that $H_K(P) \leq c_0$. 

\textbf{Note:}  $s$ is the number of terms of $F$ which is the number of monomials 
of degree $d$ in $n$ variables, and this is equal to $\binom{d+n-1}{d}$. From  theorem ~\eqref{thm_finite} there are only finitely many such points assume $P_1, \dots, P_r$.

\textbf{Step 3:} for $i=1$ to $r$ do 

\hspace{15mm}  Let $G_i(x, y) = p_i$;
                                 
\hspace{20mm} if $g(  G_i(x, y) ) = g(\X_g)$ then 
                                     
\hspace{24mm}  if $G_i(x, y) =0\iso_K F(x,y)=0$                                        
                                                
\hspace{28mm}   then add $G_i$ to the list $L$
                
\hspace{24mm} end if;                       

\hspace{20mm}end if;                                                                                                                                                              
                                                
\textbf{Step 4:}Return all entries of $L$ of minimum height , $L$ has curves isomorphic over $K$ to $\X_g$ of minimum height.

\end{alg}

\section{Moduli height of curves}
In this section we define the height in the moduli space of curves and investigate how this height can be used to study the curves. Our main goal is to investigate if the height of the moduli point has any relation to the height of the curve.

Let $g$ be an integer $g \geq 2$ and $\M_g$ denote the coarse moduli space of smooth, irreducible algebraic curves of genus $g$. It is known that $\M_g$ is a quasi projective variety of dimension $3g-3$.  Hence, $\M_g$ is embedded in $\P^{3g-2}$. Let $\p \in \M_g$. We call the moduli height $\mH(\p)$ the usual height $H(P)$ in the projective space $\P^{3g-2}$.  Obviously, $\mH(\p)$ is an invariant of the curve.

\begin{thm}For any constant $c\geq 1$, degree $d\geq 1$, and genus $g\geq 2$  there are finitely many superelliptic curves $\X_g$ defined over the ring of integers $\O_K$ of an algebraic number field $K$ such that   $[K:\Q] \leq d$ and  $\mH (\X_g) \leq c$.
\end{thm}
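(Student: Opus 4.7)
The plan is to reduce the statement to a direct application of Northcott's theorem (Theorem~\ref{thm_finite}). A superelliptic curve $\X_g : y^n = f(x)$ is determined up to $\bar K$-isomorphism by the $GL_2(\bar K)$-orbit of the binary form $f$, and hence by a single point $\f$ in the moduli space $\B_{\deg f}$, which embeds into some projective space $\P^N(\bar\Q)$ via a tuple of absolute invariants $[t_1 : \cdots : t_r : 1]$ of the form built from the basis $\{I_{i,j}\}$ of $\cR_d$ discussed in Section~\ref{bin-forms} (and concretely, via the $t_1,\dots,t_6$ of \cite{sh-4} in the hyperelliptic genus $3$ case). By the very definition of moduli height, $\mH(\X_g) = H(\f)$.

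The first step is to bound the degree of the field of definition of $\f$. Because $\X_g$ has a model over $\O_K$, the binary form $f$ has coefficients in $K$, and each $t_i$ is a rational function in these coefficients with $\Q$-coefficients; consequently $\Q(\f) \subseteq K$, so
\[
[\Q(\f):\Q] \;\leq\; [K:\Q] \;\leq\; d.
\]
The second step is then immediate: applying Theorem~\ref{thm_finite} to $\P^N$ with bounds $c_0 = c$ and $d_0 = d$, the set
\[
\{\,\p \in \P^N(\bar\Q) \,:\, H(\p)\leq c \text{ and } [\Q(\p):\Q]\leq d\,\}
\]
is finite. Each such $\p$ is the moduli point of at most one $\bar K$-isomorphism class of superelliptic curves of the given signature $(n,\deg f)$, so there are only finitely many $\bar K$-isomorphism classes with $\mH(\X_g)\leq c$ and $[K:\Q]\leq d$.

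The main technical point to pin down is the passage from $\bar K$-isomorphism classes back to the curves themselves. Over a fixed $K$ with $[K:\Q]\leq d$, the $K$-forms of a given $\X_g$ are classified by $H^1(\mathrm{Gal}(\bar K/K), \Aut(\X_g))$, which is finite because $\Aut(\X_g)$ is finite for $g\geq 2$; this shows that for each fixed moduli point only finitely many $K$-twists arise. In the formulation of the theorem, the moduli height is an invariant only of the $\bar K$-class, so it is natural (and sufficient) to count isomorphism classes at that level. The chief obstacle, therefore, is not the counting argument but rather ensuring that the chosen system of invariants $t_1,\dots,t_r$ genuinely separates $GL_2(\bar K)$-orbits and gives a well-defined projective embedding of $\B_d$; once this is granted, the argument is simply Northcott applied to $\f$.
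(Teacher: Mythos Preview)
Your proposal is correct and follows essentially the same route as the paper: identify the isomorphism class of $\X_g$ with its moduli point $\p$ in a projective space and apply Northcott's theorem (Theorem~\ref{thm_finite}, equivalently \cite[Thm.~B.2.3]{silv-book}) to the set of such points with bounded height and bounded degree. The paper's argument is terser---it simply asserts that $\p = [J_0,\dots,J_r] \in \P^r(K)$ and invokes Northcott---whereas you additionally make explicit the bound $[\Q(\f):\Q]\leq d$ and discuss the finiteness of $K$-twists over each moduli point; these refinements are sound but not strictly needed beyond what the paper sketches.
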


\begin{proof}
Let $\X_g$ be a genus $g$ superelliptic curve with equation  \[ y^n = x^{s+1} + a_s x^s + \dots +a_1 x + a_0,\]   defined over $K$, where $[K : \Q] \leq d$. Then,  $ H (\X_g) = H \left( P \right)$, where $P:=[a_0, \dots , a_s ] \in \P^s (K)$. From \cite[Thm. B.2.3]{silv-book} we know that there are finitely  such points in the projective space.  

To prove the result for the moduli height we consider the moduli point $\p = [\X_g]$ in the corresponding moduli space of superelliptic curves of genus $g\geq 2$. This point corresponds to a tuple $\p = [ J_0, \dots , J_r] \in \P^r (K)$ of $SL_2(K)$ invariants in the space of binary forms of degree $s$.  Again from \cite[Thm. B.2.3]{silv-book} there are only finitely many such points. 
\end{proof}

\section{Applications to hyperelliptic and superelliptic curves}

In this section we apply some of the results above to genus 2 curves and genus 3 hyperelliptic curves. 

\subsection{Genus 2 case}\label{genus2}

Let $C$ be a genus 2 curve defined over an algebraic number field $K$. Then there is a degree 2 map $\pi: C \to \P^1 (K)$, which is called the hyperelliptic projection.  Let the equation of $C$ be given by 
\[ y^2 = a_6 x^6 + \cdots + a_0\]
where $a_0, \dots , a_6 \in K$. The isomorphism classes of genus 2 curves are on one to one correspondence with the orbits of the $GL_2 (K)$-action on the space of binary sextics.  The invariant ring $\R_6$ is generated by the Igusa invariants $J_2, J_4, J_6, J_{10}$; see Section~\ref{bin-forms}  and \cite{sh-1} for details.  Note that Igusa $J$-invariants $\{J_i\}$ are homogenous polynomials of degree $i$ in   $k[a_0, \dots, a_6]$.

Let $\M_2$ be the moduli space of genus 2 curves considered as a projective variety, and $i_1, i_2, i_3$ be $GL_2(K)$-invariants given as in \cite{sh-1}. A point in $\M_2$ is given by $(i_1, i_2, i_3)$ and as a projective point by 
\[ \p =[J_4 J_2^3, (J_2 J_4 - 3 J_6) J_2^2, J_{10}, J_2^5].\] 
Notice that each $\p[i]$ is a degree 10 polynomial evaluated at $f$, i.e degree 10 polynomial  given in   $k[a_0, \cdots, a_6]$. Denote with $F_i(f) =\p[i]$. Then, from  Lemma~\ref{hom-at-point} we have
\[H    (F_i      (f) )\leq c_0 \cdot H    (F_i)  \cdot H(f)^{10}  \]
where $c_0=2^7 \cdot 3^2 \cdot 5 \cdot 7 \cdot 11 \cdot 13 \cdot  17$ is the  number of monomials of a degree 10 homogenous polynomial in seven variables.  Computations of $H(F_i)$ is done in Maple and we get
\[
H(F_1) = 2^{14} \cdot 3^7 \cdot 5^4, \, \, H(F_2)  = 2^{21} \cdot 3^{7} \cdot 5^4 \cdot 43,  H(F_3)  =  2^6 \cdot 3^5 \cdot 5, \, \, H(F_4)  = 2^{20} \cdot 3^5 \cdot 5^5  \\
\]
The maximum is $H(F_2)$. The moduli height of $f$ is computed as follows 
\[ \mH (f) = \max \{ H (F_1 (f), \dots , H( F_4 (f))\} \leq c_0 \cdot H(F_2) \cdot H(f)^{10}.  \]
Hence we have proved the following
\begin{lem}\label{bound-gen-2}
For a genus 2 curve with equation $y^2=f(x)$ the moduli height is bounded as follows
\[ \mH(f) \leq 2^{28} \cdot 3^9 \cdot 5^5 \cdot 7 \cdot 11 \cdot 13 \cdot  17 \cdot 43  \cdot H(f)^{10} \]
\end{lem}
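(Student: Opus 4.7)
The proof plan is essentially laid out in the paragraphs immediately preceding the lemma; the task is to assemble those observations into a clean argument. The moduli point of a genus 2 curve $y^2 = f(x)$ is the projective tuple
\[ \mathfrak f = [F_1(f), F_2(f), F_3(f), F_4(f)] = [J_4 J_2^3,\; (J_2 J_4 - 3 J_6) J_2^2,\; J_{10},\; J_2^5], \]
and the key observation is that each $F_i$ is a homogeneous polynomial of degree $10$ in the seven coefficients $a_0, \dots, a_6$ of $f$. So I would begin by recording that $\mH(f) = H(\mathfrak f)$ is, by definition of the projective height, the maximum (after multiplying over valuations) of the four quantities $|F_i(f)|_v$, and then bound each $F_i(f)$ separately.

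The next step is to invoke Lemma~\ref{hom-at-point} with $n+1 = 7$ and $d = 10$, which gives for each $i \in \{1,2,3,4\}$ the estimate
\[ H(F_i(f)) \;\leq\; c_0 \cdot H(F_i) \cdot H(f)^{10}, \]
where $c_0$ is (a bound on) the number of monomials of degree $10$ in seven variables; the paper records the value $c_0 = 2^7 \cdot 3^2 \cdot 5 \cdot 7 \cdot 11 \cdot 13 \cdot 17$. Combining the four bounds and taking the maximum of $H(F_i)$ over $i$ yields
\[ \mH(f) \;\leq\; c_0 \cdot \Bigl( \max_{1\leq i \leq 4} H(F_i) \Bigr) \cdot H(f)^{10}. \]

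The remaining work is the computation of the four heights $H(F_i)$, which is purely mechanical: expand $J_2, J_4, J_6, J_{10}$ as polynomials in $a_0, \dots, a_6$ using the classical formulas (e.g.\ from \cite{sh-1}), form the products $J_4 J_2^3$, $(J_2 J_4 - 3 J_6) J_2^2$, $J_{10}$, and $J_2^5$, and read off the maximum absolute value of the integer coefficients. This is best delegated to a computer algebra system; the paper records the outcome as
\[ H(F_1) = 2^{14} \cdot 3^7 \cdot 5^4, \quad H(F_2) = 2^{21} \cdot 3^7 \cdot 5^4 \cdot 43, \quad H(F_3) = 2^6 \cdot 3^5 \cdot 5, \quad H(F_4) = 2^{20} \cdot 3^5 \cdot 5^5, \]
so the maximum is $H(F_2)$. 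Multiplying $c_0 \cdot H(F_2)$ collapses to $2^{28} \cdot 3^9 \cdot 5^5 \cdot 7 \cdot 11 \cdot 13 \cdot 17 \cdot 43$, which matches the constant in the statement.

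The conceptual content is therefore minimal: everything follows from the point-evaluation bound in Lemma~\ref{hom-at-point} plus the explicit description of the moduli point $\mathfrak f$. The only genuine obstacle is bookkeeping — specifically, verifying the four values $H(F_i)$, since the expanded $J_{10}$ already involves hundreds of terms with sizable coefficients and the products $J_4 J_2^3$ and $J_2^5$ introduce substantial coefficient inflation from the combinatorics of expansion. I would isolate that computation in a separate remark or appendix and treat it as a Maple-verified fact, so that the body of the proof reduces to the three-line chain $\mH(f) \leq \max_i H(F_i(f)) \leq c_0 \cdot H(F_2) \cdot H(f)^{10}$.
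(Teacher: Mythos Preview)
Your proposal is correct and follows exactly the argument the paper gives in the paragraphs immediately preceding the lemma: apply Lemma~\ref{hom-at-point} to each degree-$10$ coordinate polynomial $F_i$, compute the four heights $H(F_i)$ by machine, and multiply $c_0$ by the largest of them, namely $H(F_2)$. There is no additional content in the paper's proof beyond what you have written.
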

We denote the above constant by $M_2$. From now on we write that $\mH (H) \leq M_2 H(f)^{10}$. 
Since the above result holds for any binary form equivalent to $f$ then we have that 
\[ \mH (f) \leq M_2 \cdot \tilde H (f)^{10} \]
%

%
\subsection{Genus 2 curves with height 1} 
Next we want to study genus 2 curves with height 1.  Such curves will have minimal equations with coefficients 0 or $\pm 1$. By the algorithm of the previous section we get 230 such curves listed in the Tables 1-4.  The curves are labeled 1-230 and presented by the vector of their coefficients $[ a_0, \dots , a_6]$.  From these curves 186 have automorphism groups $G$ isomorphic to the group of order 2 and are displayed in Table~\ref{tab1}.

28 of such curves have group $G$ isomorphic to $V_4$ and are displayed in Table~\ref{tab2}. There are 11 curves with automorphism group isomorphic to $D_8$ and displayed in Table~\ref{tab3}. 
The rest of the curves with larger automorphism group are displayed in Table~\ref{tab4}.

\begin{small}
\begin{table}[htdp]
\caption{Genus 2 curves with height 1 and automorphism group $D_8$}
\begin{center}
\begin{tabular}{||c|c|c|c|c|c||}
\hline 
\# &     & \# & & \# & \\
\hline 
215   &0,-1,-1,0,-1,1, 0    & 216   &  0,-1,-1,0,1,1, 0       &  217  &  0,-1,0,-1,0,-1, 0  \\
218   & 0,-1,0,-1,0,1, 0     & 219   &  -1,-1,-1,0,-1,1,-1    &  220  &  -1,-1,-1,0,1,1,1  \\
221   &-1,-1,0,0,0,-1,1    & 222   &  -1,-1,0,0,0,1,-1      &  223  &  -1, -1, 0, 0, 0, 1, 1 \\
224   & -1,-1, 1,0,1,1,-1  & 225   &  -1,0, -1,0,-1,0,-1    &       &                    \\
\hline
\end{tabular}
\end{center}
\label{tab3}
\end{table}
\end{small}

\begin{small}
\begin{table}[htdp]
\caption{Genus 2 curves with height 1 and automorphism group $ | G | \geq 10$ }
\begin{center}
\begin{tabular}{||c|c|c|c|c|c||}
\hline 
 \#   & & $|G|$  & \# & & $|G|$ \\
\hline 
226    &  1, 0, 0, 0, 0, 1, 0    & 10  & 227     &  -1, 0, 0, -1, 0, 0, -1     & 12 \\
228    &  -1, 0, 0, -1, 0, 0, 1    & 12  &  229   &  1, 0, 0, 0, 0, 0, -1    & 24  \\
230   &  0, 1, 0, 0, 0, -1, 0    & 48  &       &                          &  \\
\hline
\end{tabular}
\end{center}
\label{tab4}
\end{table}
\end{small}

\begin{tiny}
\begin{table}[htd]
\caption{Curves with height 1 and automorphism group of order 2}
\begin{center}
\begin{tabular}{||c|c|c|c|c|c|c|c||}
\hline 
\# &     & & & & & &\\
\hline 
1&-1,-1,-1,-1,-1,-1,0  & 2&-1,-1,-1,-1,-1,1,0  & 3&-1,-1,-1,-1,0,-1,0  & 4&-1,-1,-1,-1,0,1,0   \\
5&-1,-1,-1,-1,1,-1,0  & 6&-1,-1,-1,-1,1,1,0  & 7&-1,-1,-1,0,-1,-1,0  & 8&-1,-1,-1,0,-1,1,0   \\
9&-1,-1,-1,0,0,-1,0  & 10&-1,-1,-1,0,0,1,0  & 11&-1,-1,-1,0,1,-1,0  & 12&-1,-1,-1,0,1,1,0   \\
13&-1,-1,-1,1,-1,-1,0  & 14&-1,-1,-1,1,-1,1,0  & 15&-1,-1,-1,1,0,-1,0  & 16&-1,-1,-1,1,0,1,0   \\
17&-1,-1,-1,1,1,-1,0  & 18&-1,-1,0,-1,-1,-1,0  & 19&-1,-1,0,-1,-1,1,0  & 20&-1,-1,0,-1,0,-1,0   \\
21&-1,-1,0,-1,0,1,0  & 22&-1,-1,0,-1,1,-1,0  & 23&-1,-1,0,-1,1,1,0  & 24&-1,-1,0,0,-1,-1,0   \\
25&-1,-1,0,0,-1,1,0  & 26&-1,-1,0,0,0,-1,0  & 27&-1,-1,0,0,0,1,0  & 28&-1,-1,0,0,1,-1,0   \\
29&-1,-1,0,1,-1,-1,0  & 30&-1,-1,0,1,-1,1,0  & 31&-1,-1,0,1,0,-1,0  & 32&-1,-1,0,1,0,1,0   \\
33&-1,-1,0,1,1,-1,0  & 34&-1,-1,0,1,1,1,0  & 35&-1,-1,1,-1,-1,-1,0  & 36&-1,-1,1,-1,-1,1,0   \\
37&-1,-1,1,-1,0,-1,0  & 38&-1,-1,1,-1,0,1,0  & 39&-1,-1,1,-1,1,-1,0  & 40&-1,-1,1,-1,1,1,0   \\
41&-1,-1,1,0,-1,-1,0  & 42&-1,-1,1,0,-1,1,0  & 43&-1,-1,1,0,0,-1,0  & 44&-1,-1,1,0,0,1,0   \\
45&-1,-1,1,0,1,-1,0  & 46&-1,-1,1,0,1,1,0  & 47&-1,-1,1,1,-1,-1,0  & 48&-1,-1,1,1,-1,1,0   \\
49&-1,-1,1,1,0,-1,0  & 50&-1,-1,1,1,0,1,0  & 51&-1,-1,1,1,1,-1,0  & 52&-1,-1,1,1,1,1,0   \\
53&-1,0,-1,-1,-1,-1,0  & 54&-1,0,-1,-1,-1,1,0  & 55&-1,0,-1,-1,0,-1,0  & 56&-1,0,-1,-1,0,1,0   \\
57&-1,0,-1,-1,1,-1,0  & 58&-1,0,-1,-1,1,1,0  & 59&-1,0,-1,0,-1,-1,0  & 60&-1,0,-1,0,1,-1,0   \\
61&-1,0,0,-1,-1,-1,0  & 62&-1,0,0,-1,-1,1,0  & 63&-1,0,0,-1,0,-1,0  & 64&-1,0,0,-1,0,1,0   \\
65&-1,0,0,-1,1,-1,0  & 66&-1,0,0,-1,1,1,0  & 67&-1,0,1,-1,-1,-1,0  & 68&-1,0,1,-1,-1,1,0   \\
69&-1,0,1,-1,0,-1,0  & 70&-1,0,1,-1,1,-1,0  & 71&-1,0,1,-1,1,1,0  & 72&-1,0,1,0,-1,-1,0   \\
73&-1,0,1,0,1,-1,0  & 74&0,-1,-1,-1,-1,1,0  & 75&0,-1,-1,-1,0,-1,0  & 76&0,-1,-1,-1,0,1,0   \\
77&0,-1,-1,-1,1,1,0  & 78&0,-1,-1,0,0,-1,0  & 79&0,-1,-1,0,0,1,0  & 80&0,-1,-1,1,0,-1,0   \\
81&-1,-1,-1,-1,-1,-1,1  &   82&-1,-1,-1,-1,-1,0,-1  &  83&-1,-1,-1,-1,-1,0,1  &  84&-1,-1,-1,-1,-1,1,-1  \\
85&-1,-1,-1,-1,-1,1,1  &   86&-1,-1,-1,-1,0,-1,-1  &  87&-1,-1,-1,-1,0,-1,1  &  88&-1,-1,-1,-1,0,0,-1  \\
89&-1,-1,-1,-1,0,0,1  &   90&-1,-1,-1,-1,0,1,-1  &  91&-1,-1,-1,-1,0,1,1  &  92&-1,-1,-1,-1,1,-1,-1  \\
93&-1,-1,-1,-1,1,0,1  &   94&-1,-1,-1,-1,1,1,-1  &  95&-1,-1,-1,-1,1,1,1  &  96&-1,-1,-1,0,-1,-1,1  \\
97&-1,-1,-1,0,-1,0,-1  &   98&-1,-1,-1,0,-1,0,1  &  99&-1,-1,-1,0,-1,1,1  &  100&-1,-1,-1,0,0,-1,-1  \\
101&-1,-1,-1,0,0,-1,1  &   102&-1,-1,-1,0,0,0,-1  &  103&-1,-1,-1,0,0,0,1  &  104&-1,-1,-1,0,0,1,-1  \\
105&-1,-1,-1,0,0,1,1  &   106&-1,-1,-1,0,1,-1,-1  &  107&-1,-1,-1,0,1,0,-1  &  108&-1,-1,-1,0,1,0,1  \\
109&-1,-1,-1,0,1,1,-1  &   110&-1,-1,-1,1,-1,-1,1  &  111&-1,-1,-1,1,-1,0,-1  &  112&-1,-1,-1,1,-1,0,1  \\
113&-1,-1,-1,1,-1,1,1  &   114&-1,-1,-1,1,0,-1,-1  &  115&-1,-1,-1,1,0,-1,1  &  116&-1,-1,-1,1,0,0,-1  \\
117&-1,-1,-1,1,0,0,1  &   118&-1,-1,-1,1,0,1,-1  &  119&-1,-1,-1,1,0,1,1  &  120&-1,-1,-1,1,1,-1,-1  \\
121&-1,-1,-1,1,1,0,-1  &   122&-1,-1,-1,1,1,0,1  &  123&-1,-1,-1,1,1,1,-1  &  124&-1,-1,0,-1,-1,-1,1  \\
125&-1,-1,0,-1,-1,0,-1  &   \textbf{126} &-1,-1,0,-1,-1,0,1  &  127&-1,-1,0,-1,-1,1,1  &  128&-1,-1,0,-1,0,0,-1  \\
129&-1,-1,0,-1,0,0,1  &   130&-1,-1,0,-1,0,1,-1  &  131&-1,-1,0,-1,0,1,1  &  132&-1,-1,0,-1,1,-1,-1  \\
133&-1,-1,0,-1,1,0,-1  &   134&-1,-1,0,-1,1,0,1  &  135&-1,-1,0,-1,1,1,-1  &  136&-1,-1,0,0,-1,-1,1  \\
137&-1,-1,0,0,-1,0,-1  &   138&-1,-1,0,0,-1,0,1  &  139&-1,-1,0,0,-1,1,1  &  140&-1,-1,0,0,0,0,-1  \\
141&-1,-1,0,0,0,0,1  &   142&-1,-1,0,0,1,-1,-1  &  143&-1,-1,0,0,1,0,-1  &  144&-1,-1,0,0,1,0,1  \\
145&-1,-1,0,0,1,1,-1  &   146&-1,-1,0,1,-1,-1,1  &  147&-1,-1,0,1,-1,0,-1  &  148&-1,-1,0,1,-1,0,1  \\
149&-1,-1,0,1,-1,1,1  &   150&-1,-1,0,1,0,0,-1  &  151&-1,-1,0,1,0,0,1  &  152&-1,-1,0,1,1,-1,-1  \\
153&-1,-1,0,1,1,0,-1  &   154&-1,-1,0,1,1,0,1  &  155&-1,-1,0,1,1,1,-1  &  156&-1,-1,1,-1,-1,0,-1  \\
157&-1,-1,1,-1,-1,0,1  &   158&-1,-1,1,-1,-1,1,1  &  159&-1,-1,1,-1,0,0,-1  &  160&-1,-1,1,-1,0,0,1  \\
161&-1,-1,1,-1,1,0,-1  &   162&-1,-1,1,-1,1,0,1  &  163&-1,-1,1,-1,1,1,-1  &  164&-1,-1,1,0,-1,0,-1  \\
165&-1,-1,1,0,-1,0,1  &   166&-1,-1,1,0,0,0,-1  &  167&-1,-1,1,0,0,0,1  &  168&-1,-1,1,0,1,0,-1  \\
169&-1,-1,1,0,1,0,1  &   170&-1,-1,1,1,-1,0,-1  &  171&-1,-1,1,1,-1,0,1  &  172&-1,-1,1,1,0,0,-1  \\
173&-1,-1,1,1,0,0,1  &   174&-1,-1,1,1,1,0,-1  &  175&-1,-1,1,1,1,0,1  &  176&-1,0,-1,-1,-1,0,1  \\
177&-1,0,-1,-1,0,0,-1  &   178&-1,0,-1,-1,0,0,1  &  179&-1,0,-1,-1,1,0,-1  &  180&-1,0,0,-1,-1,0,1  \\
181 & 0,-1,-1,1,0,1,0 & 182 & -1,-1,0,1,0,0,-1 & 183 & -1,0,-1,0,0,-1,0  & 184 & -1,0,0,0,-1,-1,0    \\
185 & -1,0,0,0,1,-1,0    & 186 & -1,0,1,0,0,-1,0       &     &                          &  &    \\ 
\hline
\end{tabular}
\end{center}
\label{tab1}
\end{table}%
\end{tiny}

\clearpage

\begin{small}
\begin{table}[htdp]
\caption{Genus 2 curves with height 1 and automorphism group $V_4$}
\begin{center}
\begin{tabular}{||c|c|c|c|c|c||}
\hline 
\# &     & & & & \\
\hline 
187  &-1,-1,-1,-1,-1,-1,-1  & 188  &-1,-1,-1,-1,1,-1,1  & 189 &-1,-1,-1,0,-1,-1,-1  \\
190  &-1,-1,-1,0,1,-1,1     & 191  &-1,-1,-1,1,-1,-1,-1 & 192 &-1,-1,-1,1,1,-1,1  \\
193  &-1,-1,0,-1,0,-1,-1    & 194  &-1,-1,0,-1,0,-1,1   & 195& -1,-1,0,1,0,-1,-1  \\
196 &-1,-1,0,1,0,-1,1      & 197 &-1,-1,1,-1,-1,-1,1  & 198 &-1,-1,1,-1,1,-1,-1  \\
199 &-1,-1,1,0,-1,-1,1      & 200 &-1,-1,1,0,1,-1,-1   & 201 &-1,-1,1,1,-1,-1,1    \\
202 &-1,-1,1,1,1,-1,-1      & 203 &-1,0,-1,-1,-1,0,-1  & 204 &-1,0,-1,-1,1,0,1  \\
205 &-1,0,-1,0,-1,0,1       &  206 &-1,0,-1,0,0,0,-1        & 207 & -1, 0, -1, 0, 0, 0, 1 \\
208 & -1, 0, 1, -1, -1, 0,1 & 209 & -1, 0, 1, -1, 1, 0, -1 & 210 &  -1, -1, 1, 1, -1, 0 \\
211 &0,-1,-1,-1,-1,-1, 0     & \textbf{212} & 0,1,1,1,1,1,0  & 213& 0,-1,-1,0,1,-1, 0  \\
214 &0,-1,-1,1,-1,-1, 0     &     &                  &   & \\
\hline
\end{tabular}
\end{center}
\label{tab2}
\end{table}
\end{small}

Since their height is 1 then it is automatically minimal.  Compiling such tables for height $H > 1$ takes longer since there are a lot more curves and one needs to check that the height for each canditate is minimal.

From the previous Lemma~\ref{bound-gen-2} when $H (C) =1$ we get that for any curve $C$, the moduli height is $ \mH(C) \leq M_2$. 
Indeed the biggest moduli height for all 230 curves of height 1 is 
\[ \mH ( C_{126}) = 2^7 \cdot 3^2 \cdot 5^2 \cdot 151^2 \cdot 3863 < M_2= 2^{28} \cdot 3^9 \cdot 5^5 \cdot 7 \cdot 11 \cdot 13 \cdot  17 \cdot 43 \]
which occurs for curve $C_{126}$ on the table.

The curve $C$ with height $H (C) =1$ and smallest moduli height is the curve $C_{212} $ with $\mH (C) = 34560= 2^8 \cdot 3^3 \cdot 5$. The curve has equation 
\[ C_{212} : \quad y^2 = x^5+x^4+x^3+x^2+x \]
with invariants \[ i_1 = - \frac {48} 5, \quad i_2 = \frac {432} 5, \quad i_3 = 1 {400} \] and it has automorphism group isomorphic to $V_4$.

Consider now the problem of being given the moduli point as above.  Since $i_1, i_2, i_2 \in \Q$ and $\Aut (\p) \iso V_4$ then the curve is defined over $\Q$ as explained in \cite{sh-1}.  The equation provided by the algorithm in \cite{sh-1} is 
\[ 
\begin{split}
y^2 & =442765625 x^6-719030400000 x^5+320847859200000 x^4-64095440076800000 x^3\\
& +6360693303410688000 x^2-282590704159256739840 x+3449767488965367037952 \\
\end{split}
\]
and even after using Maple's "Shorten" command we only get which searches for equivalent binary forms up to transformations $x \to x+ b$ we only get 
\[ y^2 = 28337 x^6-326832 x^5+1035795 x^4-1469600 x^3+1035795 x^2-326832 x+28337 \]

In \cite{height-2} we devise an algorithm which determines the equation of superelliptic curves with minimal height.

\subsection{Genus 3 case}\label{genus3}

Let $C$ be a genus 3 curve defined over an algebraic number field $K$. Then there is a degree 2 map $\pi: C \to \P^1 (K)$, which is called the hyperelliptic projection.  Let the equation of $C$ be given by 
\[ y^2 = a_8 x^8 + \cdots + a_1 x + a_0\]
where $a_0, \dots , a_8 \in K$, and $\D(f) \neq 0$.  The invariant ring $R_8$ is generated by nine $SL_2(K)$-invariants $J_2, \dots, J_{10}$; see~\cite{sh-4} for details. 

Let $\M_3$ be the moduli space of genus 3 curves considered as a projective variety,  and $t_1, \dots, t_6$ be  $GL(2, k)$-invariants given  as  follows
\[
t_1:= \frac {J_3^2} {J_2^3}, \quad 
t_2:= \frac {J_4} {J_2^2}, \quad 
t_3:= \frac {J_5} {J_2\cdot J_3}, \quad 
t_4:= \frac {J_6} {J_2\cdot  J_4}, \quad 
t_5:= \frac {J_7} {J_2 \cdot J_5}, \quad 
t_6:= \frac {J_8} {J_2^4},
\]
Let 
\[\p = \left[ J_2 J_3^2 J_4 J_5, \, J_2^2 J_3 J_4^2 J_5, \, J_2^3   J_4 J_5^2, \,J_2^3 J_3  J_5 J_6, \, J_2^3 J_3 J_4 J_7,  \, J_3 J_4 J_5 J_8, J_2^4 J_3 J_4 J_5 \right]\]
be a point in $\M_3$.  Each $\p[i]$ is a degree 20 polynomial evaluated at $f$, i.e degree 20 polynomial  given in   $k[a_0, \dots, a_8]$. Denote with $F_i(f) =\p[i]$.   Then, from  Lemma~\ref{hom-at-point} we have
\[H    (F_i      (f) )\leq c_0 \cdot H    (F_i)  \cdot H(f)^{20}  \]
where $c_0= $ is the  number of monomials of a degree 20 homogenous polynomial in nine variables.  

The proof of the following lemma is provided in \cite{height-2}

\begin{lem}
For a genus 3 curve with equation $y^2=f(x)$, where $f(x)$ is a degree 8 polynomials  the moduli height is bounded as follows
\[ \mH(f) \leq c \cdot H(f)^{20} \]
\end{lem}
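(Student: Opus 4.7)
The plan is to mimic exactly the proof of Lemma~\ref{bound-gen-2} (the genus 2 case), since the setup has been arranged so that the same argument goes through with only the numerical data changing. The projective coordinates of the moduli point $\mathfrak{p}=[F_1(f),\dots,F_7(f)]$ are, by construction, all homogeneous of the same combined weight in the Igusa/Shioda invariants $J_2,\dots,J_8$, and each $J_i$ is itself a homogeneous polynomial of degree $i$ in $k[a_0,\dots,a_8]$. First I would verify (by a direct count of the subscripts in the listed products) that each $F_i$ is homogeneous of total degree exactly $20$ in $a_0,\dots,a_8$, so that the ratios $F_i/F_j$ recover rational combinations of the $GL_2(k)$--invariants $t_1,\dots,t_6$ and $\mathfrak{p}$ is indeed a well-defined point of the projective moduli space.

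Next I would apply Lemma~\ref{hom-at-point}(ii) to each $F_i$ individually. Viewing $F_i\in K[a_0,\dots,a_8]$ as a homogeneous polynomial of degree $20$ in the $9$ coefficient variables and evaluating at the point $\alpha=(a_0,\dots,a_8)\in K^9$ whose height equals $H(f)$, the lemma gives
\[
H(F_i(f)) \;\leq\; c_0 \cdot H(F_i)\cdot H(f)^{20},
\]
where $c_0$ is a bound on the number of monomials of a degree $20$ form in $9$ variables, namely $c_0\leq\binom{20+8}{8}=\binom{28}{8}$, and $H(F_i)$ is a fixed constant depending only on the invariant theory (not on the particular curve).

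Then I would take the maximum over $i=1,\dots,7$. Because the moduli height is the projective height of the tuple $[F_1(f),\dots,F_7(f)]$, a coordinatewise bound immediately yields
\[
\mathcal{H}(f) \;=\; H\bigl([F_1(f),\dots,F_7(f)]\bigr) \;\leq\; \max_{1\leq i\leq 7} H(F_i(f)) \;\leq\; c_0\cdot \Bigl(\max_{1\leq i\leq 7} H(F_i)\Bigr)\cdot H(f)^{20}.
\]
Setting $c := c_0 \cdot \max_i H(F_i)$ gives the asserted inequality $\mathcal{H}(f)\leq c\cdot H(f)^{20}$, with $c$ a constant depending only on the invariant theory of binary octavics.

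The only genuinely laborious part is making the constant $c$ explicit: one must expand each $F_i$ as a polynomial in $a_0,\dots,a_8$ and compute $H(F_i)$, exactly as was done in Maple for the genus $2$ case to produce $H(F_1),\dots,H(F_4)$ and extract the dominant one. For binary octavics this expansion is vastly larger (seven $F_i$'s, each a degree--$20$ polynomial in $9$ variables built from $J_2,\dots,J_8$), so the step I expect to be the main obstacle is the symbolic computation of $\max_i H(F_i)$ --- a bookkeeping challenge rather than a conceptual one, and the reason the authors defer the explicit value of $c$ to \cite{height-2} rather than stating it here.
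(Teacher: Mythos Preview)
The paper does not actually prove this lemma here---it explicitly states ``The proof of the following lemma is provided in \cite{height-2}''---but immediately before the statement it sketches the argument by writing $F_i(f)=\p[i]$ and invoking Lemma~\ref{hom-at-point} to get $H(F_i(f))\leq c_0\cdot H(F_i)\cdot H(f)^{20}$, exactly as in the genus~2 case. Your proposal follows that sketch (and the genus~2 template of Lemma~\ref{bound-gen-2}) verbatim, including the passage from the individual bounds on $H(F_i(f))$ to the bound on the projective moduli height; this is the paper's intended approach, so there is nothing to add.
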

We denote the above constant by $M_3$. From now on we write that $\mH (H) \leq M_3 \cdot H(f)^{20}$. 
Since the above result holds for any binary form equivalent to $f$ then we have that $ \mH (f) \leq M_3 \cdot \tilde H (f)^{20}$. 
%
\section{Final remarks}

A continuation of this work and proving some of the results here is intended in \cite{height-2}, where we also improve the algorithm to find an equation of the curve with minimal height.

\nocite{*}
\bibliographystyle{amsplain} 

\bibliography{mybib}{}


\end{document}